\newcommand{\N}{\mathbb{N}}
\newcommand{\Z}{\mathbb{Z}}
\newcommand{\R}{\mathbb{R}}
\newcommand{\C}{\mathbb{C}}
\newcommand{\A}{\mathcal{A}}
\newcommand{\B}{\mathcal{B}}
\renewcommand{\S}{\mathcal{S}}
\newcommand{\SR}{\mathcal{S}(\R)}
\newcommand{\norm}[1]{\Vert#1\Vert}
\newcommand{\bignorm}[1]{\bigl\Vert#1\bigr\Vert}
\newcommand{\Bignorm}[1]{\Bigl\Vert#1\Bigr\Vert}
\newcommand{\norme}[1]{\left\Vert #1\right\Vert}
\newcommand{\normeinf}[1]{\norme{#1}_{\infty}}
\newcommand{\PM}{\mathcal{PM}}
\newcommand{\MH}{\mathcal{M}(H^1(\R))}
\newcommand{\MHp}{\mathcal{M}(H^p(\R))}
\theoremstyle{plain}
\newtheorem{thm}{Theorem}[section]
\newtheorem{prop}[thm]{Proposition}
\newtheorem{lem}[thm]{Lemma}
\newtheorem{cor}[thm]{Corollary}
\theoremstyle{definition}
\newtheorem{df}[thm]{Definition}
\theoremstyle{remark}
\newtheorem{rq1}[thm]{Remark}
\newtheorem{ex}[thm]{Example}
\numberwithin{equation}{section}
\theoremstyle{break}
\theoremstyle{nonumberplain}
\begin{document}

\title[Functional calculus on Hilbert space]
{Functional calculus for a bounded $C_0$-semigroup on Hilbert space}

\author[L. Arnold]{Loris Arnold}
\email{larnold@impan.pl}
\address{Institute of Mathematics, Polish Academy of Sciences, Śniadeckich 8, Warszawa, Poland}
\address{Universit\'e de Franche-Comt\'e, Laboratoire de Math\'ematiques de Besan\c con, UMR CNRS 6623,
16 Route de Gray, 25000 Besan\c{c}on, France}
\author[C. Le Merdy]{Christian Le Merdy}
\email{clemerdy@univ-fcomte.fr}
\address{Universit\'e de Franche-Comt\'e, Laboratoire de Math\'ematiques de Besan\c con, UMR CNRS 6623,
16 Route de Gray, 25000 Besan\c{c}on, France}

\date{\today}

\maketitle

\begin{abstract} 
We introduce a new Banach algebra $\A(\C_+)$
of bounded analytic  functions on $\C_+=\{z\in\C\, :\,
{\rm Re}(z)>0\}$ which is an analytic version
of the Figa-Talamenca-Herz algebras on $\R$. Then we
prove that the negative generator $A$ of any 
bounded $C_0$-semigroup on Hilbert space $H$
admits a bounded (natural) functional
calculus $\rho_A\colon \A(\C_+)\to B(H)$.
We prove that this is an improvement
of the bounded functional
calculus $\B_0(\C_+)\to B(H)$ recently devised by
Batty-Gomilko-Tomilov on a certain
Besov algebra $\B_0(\C_+)$ of analytic functions on $\C_+$,
by showing that $\B_0(\C_+)\subset \A(\C_+)$ and 
$\B_0(\C_+)\not= \A(\C_+)$. In the 
Banach space setting, we give similar results
for negative generators of
$\gamma$-bounded $C_0$-semigroups.
The study of $\A(\C_+)$ involves dealing with 
Fourier multipliers on the Hardy 
space $H^1(\R)\subset L^1(\R)$ of analytic functions.
\end{abstract}

\vskip 0.8cm
\noindent
{\it 2000 Mathematics Subject Classification:} 47A60, 30H05, 47D06.

\smallskip
\noindent
{\it Key words:} Functional calculus, Semigroups,
Hardy spaces, Fourier multipliers, Besov spaces,
$\gamma$-boundedness.

\vskip 1cm

\section{Introduction}\label{Intro} 
Let $H$ be a Hilbert space and let $-A$ be the  
infinitesimal generator of a bounded $C_0$-semigroup $(T_t)_{t\geq 0}$
on $H$.
To any $b\in L^1(\R_+)$, one may associate the operator 
$\Gamma(A,b)\in B(H)$ defined by 
$$
[\Gamma(A,b)](x) = \int_0^\infty b(t) T_t(x)\, dt,\qquad x\in H.
$$
The mapping $b\mapsto \Gamma(A,b)$ is the so-called Hille-Phillips
functional calculus (\cite{hp}, see also \cite[Section 3.3]{haa1})
and we obviously have
$$
\norm{\Gamma(A,b)}\leq C\norm{b}_1,\qquad b\in L^1(\R_+),
$$
where $C=\sup_{t\geq 0}\norm{T_t}$. This holds true as well 
for any  bounded $C_0$-semigroup on Banach space. However we 
focus here on semigroups acting on Hilbert space.

If $(T_t)_{t\geq 0}$ is a contractive semigroup (i.e. $\norm{T_t}\leq 1$
for all $t\geq 0)$ on $H$, then we have the much stronger estimate
$\norm{\Gamma(A,b)}\leq \norm{\widehat{b}}_\infty$
for all $b\in L^1(\R_+)$, where $\widehat{b}$ denotes the Fourier
transform of $b$. This is a semigroup version
of von Neumann's inequality, see \cite[Section 7.1.3]{haa1} 
for a proof. Hence more generally, if
$(T_t)_{t\geq 0}$ is similar to a contractive semigroup, then 
there exists a constant $C\geq 1$ such that
\begin{equation}\label{PB}
\norm{\Gamma(A,b)}\leq C\norm{\widehat{b}}_\infty,\qquad b\in L^1(\R_+).
\end{equation}
However not all negative generators of bounded $C_0$-semigroups satisfy 
such an estimate. Indeed if $A$ is sectorial of type $<\frac{\pi}{2}$, it
follows from \cite[Section 3.3]{haa1} that $A$ satisfies an estimate of the form (\ref{PB}) 
exactly when $A$ has a bounded $H^\infty$-functional calculus,
see Subsection \ref{H-infty} for more on this.

The motivation for this paper is the search
for sharp estimates of $\norm{\Gamma(A,b)}$, 
and of the norms of other functions of $A$, valid for all 
negative generators of bounded $C_0$-semigroups.
A major breakthrough was achieved by Haase \cite[Corollary 5.5]{haa3}
who proved an estimate 
\begin{equation}\label{MH}
\norm{\Gamma(A,b)}\leq C\norm{L_b}_{\B_0},
\end{equation}
where $\norm{\,\cdotp}_{\B_0}$ denotes the norm with respect to a suitable
Besov algebra $\B_0(\C_+)$ of analytic functions, and 
$$
L_b\colon \C_+=\{{\rm Re}(\,\cdotp)>0\}\to \C,\quad
L_b(z)=\int_0^\infty b(t)e^{-tz}\, dt,
$$
is the Laplace transform of $b$. More recently,
Batty-Gomilko-Tomilov \cite{bgt1} (see also \cite{bgt2})
extended Haase's result by providing an
explicit construction of
a bounded functional calculus $\B_0(\C_+)\to B(H)$ associated with $A$,
extending the Hille-Phillips functional calculus.
It is worth mentioning the related
works by Vitse \cite{vit1} and White \cite{white} (see also Subsection \ref{Added}).
We refer to \cite[Appendix]{bgt1} for more information on $\B_0(\C_+)$
and on variants of this Besov algebra.

In this paper we introduce the space $\A(\R)\subset H^\infty(\R)$ defined by
\[
\A(\R)= \Big\{ F= \sum_{k=1}^{\infty}f_k\star h_k\, :\,
(f_k)_{k\in \N}\subset BUC(\R), \, (h_k)_{k\in \N}\subset H^1(\R), \, 
\sum_{k=1}^{\infty}\normeinf{f_k}\norme{h_k}_1 < \infty \Big\},
\]
equipped with the norm $\norme{F}_{\A} = \inf 
\{\sum_{k=1}^{\infty}\normeinf{f_k}\norme{h_k}_1\}$,
where the infimum runs over all sequences  $(f_k)_{k\in \N} \subset BUC(\R)$ 
and $(h_k)_{k\in \N} \subset H^1(\R)$ such that 
$\sum_{k=1}^{\infty}\normeinf{f_k}\norme{h_k}_1<\infty$ and
$F= \sum_{k=1}^{\infty}f_k\star h_k$.
The definition of this space is inspired by Peller's paper \cite{pel1},
where a discrete analogue of $\A(\R)$ was introduced to study 
functions of power bounded operators on Hilbert space. We also
refer to \cite{Wells} for earlier results on this theme.
Furthermore $\A(\R)$ can be regarded as an $H^1(\R)$-version
of the Figa-Talamanca-Herz algebras
$A_p(\R)$, $1<p<\infty$, for which we refer e.g. to 
\cite[Chapter 3]{der}.

We prove in Section \ref{AA0} that $\A(\R)$ is 
indeed a Banach algebra for pointwise
multiplication. Next in Section \ref{CF} we introduce the natural half-plane version  
$\A(\C_+)\subset H^\infty(\C_+)$ of $\A(\R)$, we prove that
it contains $L_b$ for all $b\in L^1(\R_+)$,  and we show
(Corollary \ref{main3}) that whenever $A$ is the negative generator
of a bounded $C_0$-semigroup $(T_t)_{t\geq 0}$ on Hilbert space,
there is a unique bounded homomorphism $\rho_A\colon\A(\C_+)\to B(H)$,
such that
\begin{equation}\label{compatible}
\rho_A(L_b) =\Gamma(A,b),\qquad b\in L^1(\R_+).
\end{equation}
More precisely we show that
$$
\norm{\Gamma(A,b)}\leq\Bigl(\sup_{t\geq 0}\norm{T_t}\Bigr)^2\,\norm{L_b}_{\A},\qquad b\in L^1(\R_+).
$$
This improves Haase's estimate (\ref{MH})
mentioned above. Our work also improves 
\cite[Theorem 4.4]{bgt1} in the Hilbert space case. Indeed we show in Section \ref{Besov} 
that the Besov algebra considered in \cite{haa1,bgt1} is included
in $\A(\C_+)$, with an estimate $\norm{\,\cdotp}_{\A}\lesssim
\norm{\,\cdotp}_{\B_0}$, and we also show that the converse is not true.

In general, our main result (Corollary \ref{main3}) does not hold true on 
non-Hibertian Banach spaces (see the start of Section \ref{Banach}). 
In Section \ref{Banach}, following ideas
from \cite{arn2, arn1, lem1}, we give a Banach space version of Corollary \ref{main3}, using 
the notion of $\gamma$-boundedness. Namely we show that 
if  $A$ is the negative generator
of a bounded $C_0$-semigroup $(T_t)_{t\geq 0}$ on a Banach space
$X$,
then the set $\{T_t\, :\, t\geq 0\}\subset B(X)$ is $\gamma$-bounded 
if and only if  there exists 
a $\gamma$-bounded homomorphism $\rho_A\colon\A(\C_+)\to B(X)$
satisfying (\ref{compatible}). This should be regarded
as a semigroup version of \cite[Theorem 4.4]{lem1}, where a 
characterization of $\gamma$-bounded continuous representations 
of amenable groups was established.

Our results make crucial use of Fourier multipliers on the Hardy
space $H^1(\R)$. Section \ref{MultH1} is devoted to this topic. In particular we 
establish the following result of independent interest: if a bounded operator
$T\colon H^1(\R)\to H^1(\R)$
commutes with translations, then there exists a bounded continuous 
function $m\colon\R_+^*\to\C$ such that $\norm{m}_\infty\leq\norm{T}$
and for any $h\in H^1(\R)$, $\widehat{T(h)} = m\widehat{h}$.

\subsection*{Notation and convention.} 
We use the notations $\R_+=[0,\infty)$, $\R_+^*=(0,\infty)$ 
and $\R_-=(-\infty,0]$ on the real line.

We will use the following open half-planes of 
$\C$, 
$$
\C_+ := \{ z \in \C\, :\, {\rm Re}(z) > 0 \},
\quad
P_+ := \{ z \in \C\, :\,  {\rm Im}(z) > 0 \}, 
\quad
P_- := \{ z \in \C\, :\,  {\rm Im}(z) < 0 \}. 
$$
Also for any real $\alpha\in\R$, we set
\begin{equation}\label{HP0}
{\mathcal H}_\alpha = 
\{ z \in \C\, :\,  {\rm Re}(z) > \alpha \}.
\end{equation}
In particular, ${\mathcal H}_0=\C_+$.

For any $s\in \R$, we let $\tau_s\colon 
L^1(\R)+L^\infty(\R)\to L^1(\R)+L^\infty(\R)$
denote the translation operator defined by
$$
\tau_sf(t) = f(t-s),\qquad t\in\R,
$$
for any $f\in L^1(\R)+L^\infty(\R)$.

The Fourier transform of any $f\in L^1(\R)$ is defined
by
\[
\widehat{f}(u) = \int_{-\infty}^\infty f(t)e^{-itu}\,dt,
\qquad u\in\R.
\]
Sometimes we write $\mathcal{F}(f)$ instead of 
$\widehat{f}$. We will also let $\mathcal{F}(f)$ or
$\widehat{f}$ denote the Fourier transform of any
$f\in L^1(\R)+L^\infty(\R)$. Wherever it makes sense, we will
use $\mathcal{F}^{-1}$ to denote the inverse Fourier transform.

We will use several times the following elementary result (which follows from 
Fubini's theorem and the Fourier inversion theorem).

\begin{lem}\label{Tool} Let $f_1,f_2\in L^1(\R)$ such that 
either $\widehat{f_1}$ or $\widehat{f_2}$ belongs to 
$L^1(\R)$. Then 
$$
\int_{-\infty}^\infty f_1(t)f_2(t)\, dt
\,=\,\frac{1}{2\pi}\,
\int_{-\infty}^\infty \widehat{f_1}(u)\widehat{f_2}(-u)\,
du.
$$
\end{lem}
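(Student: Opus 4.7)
The plan is to reduce to Fourier inversion on whichever of the two factors has an integrable Fourier transform, then exchange the order of integration. By symmetry of the statement (both sides are symmetric in $f_1$ and $f_2$ up to the sign reflection $u\mapsto -u$ inside the integral on the right), we may assume without loss of generality that $\widehat{f_1}\in L^1(\R)$. Since $f_1\in L^1(\R)$, the Fourier inversion theorem then applies to $f_1$ and yields the pointwise identity
$$
f_1(t)=\frac{1}{2\pi}\int_{-\infty}^{\infty}\widehat{f_1}(u)e^{itu}\,du,\qquad t\in\R.
$$

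I would then substitute this expression into the left-hand side of the claimed identity, so that
$$
\int_{-\infty}^{\infty}f_1(t)f_2(t)\,dt
=\int_{-\infty}^{\infty}f_2(t)\biggl(\frac{1}{2\pi}\int_{-\infty}^{\infty}\widehat{f_1}(u)e^{itu}\,du\biggr)dt.
$$
The double integrand $\widehat{f_1}(u)f_2(t)e^{itu}$ has absolute value $\vert\widehat{f_1}(u)\vert\,\vert f_2(t)\vert$, which is integrable on $\R\times\R$ because $\widehat{f_1}\in L^1(\R)$ and $f_2\in L^1(\R)$. Thus Fubini's theorem applies and allows swapping the order of integration.

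After swapping, the inner integral becomes
$$
\int_{-\infty}^{\infty}f_2(t)e^{itu}\,dt=\widehat{f_2}(-u),
$$
by definition of the Fourier transform, and one obtains the desired formula. The argument is purely formal: the only point that requires any care is verifying the hypothesis for Fubini, but this is immediate from the $L^1$-assumptions. I would not expect any real obstacle here; the statement is recorded only because it will be invoked several times in the subsequent sections.
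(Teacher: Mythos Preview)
Your proof is correct and follows exactly the approach indicated in the paper, which simply records that the lemma ``follows from Fubini's Theorem and the Fourier inversion Theorem'' without further details. The only microscopic refinement one might add is that Fourier inversion gives the pointwise formula for the continuous representative of $f_1$ (equal to $f_1$ almost everywhere), which is of course all that is needed inside the integral.
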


The norm on $L^p(\R)$ will be denoted by $\norm{\,\cdotp}_p$.
We let $C_0(\R)$ (resp. $BUC(\R)$, resp. 
$C_b(\R)$) denote the Banach algebra  of 
continuous functions on $\R$ which vanish at infinity 
(resp. of bounded and uniformly continuous functions on $\R$, resp.
of bounded continuous functions on $\R$),
equipped with the sup-norm $\norm{\,\cdotp}_\infty$. 
We set
\begin{equation}\label{C00}
C_{00}(\R) = \{f\in L^1(\R)\, :\, \widehat{f}\in L^1(\R)\}.
\end{equation}
This is a dense subspace of $C_0(\R)$.
Further we let $\SR$ denote the Schwartz space on $\R$
and we let 
$M(\R)$ denote the Banach algebra 
of all bounded Borel measures on $\R$.

We will use the identification $M(\R)\simeq C_0(\R)^*$ 
(Riesz's theorem) provided by the duality
pairing
\begin{equation}\label{Riesz}
\langle\mu,f\rangle \,=\, \int_{\tiny{\R}} f(-t)\, d\mu(t),\qquad
\mu\in M(\R),\ f\in C_0(\R).
\end{equation}
The use of a minus sign in this duality pairing will make the 
study of $\A(\R)$ easier.

For any non empty open set $\mathcal{O}\subset\C$, we let
$H^{\infty}(\mathcal{O})$ denote the Banach algebra
of all  bounded analytic functions on $\mathcal{O}$,
equipped with  the sup-norm $\norm{\,\cdotp}_\infty$.

Let $X,Y$ be (complex) Banach spaces. We let 
$B(X,Y)$ denote the Banach space of all bounded operators
$X\to Y$. We simply write $B(X)$ instead of $B(X,X)$,
when $Y=X$. We let $I_X$ denote the identity operator 
on $X$.

The domain of 
an operator $A$ on some Banach space $X$ is 
denoted by ${\rm Dom}(A)$. Its kernel and range are
denoted by ${\rm Ker}(A)$ and ${\rm Ran}(A)$,
respectively. If $z\in\C$  belongs to the resolvent set
of $A$, we let $R(z,A)=(zI_X-A)^{-1}$ denote the corresponding resolvent operator.

\section{Fourier multipliers on $H^1(\R)$}\label{MultH1}
We denote by $H^1(\R)$ the classical Hardy space, 
defined as the closed subspace of $L^1(\R)$ 
of all functions $h$ such that $\widehat{h}(u) = 0$ for all $u\leq 0$. 
For any $1<p< \infty $, we denote by $H^p(\R)$ the closure of $H^1(\R)\cap L^p(\R)$ in $L^p(\R)$.
Also we let $H^\infty(\R)$ denote the $w^*$-closure of $H^1(\R)\cap L^\infty(\R)$ in $L^\infty(\R)$.
We recall (see e.g. \cite{gar}, \cite{hof} or \cite{koo}) that for any $1\leq p\leq\infty$,
$H^p(\R)$
coincides with the subspace of all functions $f\in L^p(\R)$ whose 
Poisson integral ${\mathcal P}[f]\colon P_+\to\C$  is analytic. The following is classical as well.

\begin{lem}\label{H1HInf}
Let $g\in L^\infty(\R)$. Then $g\in H^\infty(\R)$ if and only if $\int_{-\infty}^{\infty} g(t)h(t)\, dt\,=0$
for all $h\in H^1(\R)$.
\end{lem}

\begin{proof}
This statement means that $H^\infty(\R)$ is the annihilator of $H^1(\R)$
in the usual $L^1/L^\infty$-duality. To prove it, it suffices to check the equivalent property that 
$H^1(\R)$ is the pre-annihilator of $H^\infty(\R)$.

If $g\in H^\infty(\R)$  and  $h\in H^1(\R)$, then $gh\in H^1(\R)$ hence 
$\int_{-\infty}^{\infty} g(t)h(t)\, dt\,= \widehat{gh}(0)=0$, which proves one inclusion. 
Now
define $c_u(t)=e^{-iut}$ for all $u\leq 0$ and all $t\in\R$. Then $c_u\in H^\infty(\R)$,
with ${\mathcal P}[c_u](z)=e^{-iuz}$ for all $z\in P_+$. If $h\in L^1(\R)$ 
belongs to the pre-annihilator of $H^\infty(\R)$, then
$\widehat{h}(u)=\int_{-\infty}^{\infty} c_u(t)h(t)\, dt\,=0$ for all $u\leq 0$,
that is, $h\in H^1(\R)$. This proves the reverse inclusion.
\end{proof}

It is well-known that $H^p(\R)$ also coincides with the subspace 
of all functions in $L^p(\R)$ whose (distributional) Fourier transform has support in $\R_+$.
In particular, $H^2(\R)$ is the subspace of all functions in $L^2(\R)$ whose
Fourier transform (regarded as an element of 
$L^2(\R)$)
vanishes almost everywhere on $\R_{-}$. This can be expressed by the identification
\begin{equation}\label{H2=FL2}
\mathcal{F}(H^2(\R)) = L^{2}(\R_+).
\end{equation}
Let $m \in L^{\infty}(\R_+)$. Using (\ref{H2=FL2}), we may associate
\[
\begin{array}{ccccc}
T_m & : & H^2(\R) & \to & H^2(\R) \\
& & h & \mapsto & \mathcal{F}^{-1} (m\widehat{h}), \\
\end{array}
\]
and we have $\norm{T_m}=\norm{m}_\infty$.
The function $m$ is called
the symbol of $T_m$.

Let $1 \leq p < \infty$. Assume that 
$T_m$ is bounded with respect to the $H^p(\R)$-norm, that is, there exists 
a constant $C>0$ such that 
\begin{equation}\label{ineH1mul}
\bignorm{\mathcal{F}^{-1} (m\widehat{h})}_p \leq C\norme{h}_p,\qquad 
h\in H^p(\R)\cap H^2(\R).
\end{equation}
Then since $H^p(\R)\cap H^2(\R)$ is dense in $H^p(\R)$,  
$T_m$ uniquely extends to a bounded operator on $H^p(\R)$
whose norm is the least possible
constant $C$ satisfying \eqref{ineH1mul}. In this case
we keep the same
notation $T_m\colon H^p(\R)\to H^p(\R)$ for this extension.
Operators of this form 
are called bounded Fourier multipliers on $H^p(\R)$.
They form a subspace of $B(H^p(\R))$, that we denote by  
$\MHp$. It is plain that ${\mathcal M}(H^2(\R))\simeq L^{\infty}(\R_+)$ isometrically.
In the sequel we will be mostly interested by $\MH$.

The above definitions parallel the classical definitions of bounded Fourier multipliers
on $L^p(\R)$, that we will use without any further reference.

\begin{ex}\label{taus}
Let $s \in \R$. For all $h \in L^1(\R)$, one has
$\widehat{\tau_s h}(u) = e^{-isu}\widehat{h}(u)$ for all $u\in\R$. Hence
$\tau_s$ maps $H^1(\R)$ into itself. Further $\tau_s$ is 
a bounded Fourier multiplier on $H^1(\R)$, with symbol $m(u) =  e^{-isu}$. 
\end{ex}

In the sequel we say that a bounded operator $T\colon H^1(\R) \to H^1(\R)$ 
commutes with translations if $T\tau_s = \tau_s T$ for each $s\in \R$.
By Example \ref{taus},  any bounded Fourier multiplier on $H^1(\R)$
commutes with translations.
The next result implies that the converse is true and provides a sharp estimate on the symbol of 
an element of $\MH$.

\begin{thm}\label{thmmutlH1}
Let $T \in B(H^1(\R))$ and assume that 
$T$ commutes with translations.
Then there exists a bounded continuous function 
$m \colon \R_+^* \rightarrow \C$ 
such that $\widehat{Th} =m\widehat{h}$ for all 
$h\in H^1(\R)$ (and hence $T = T_m$). In this case, we have
\begin{equation}\label{ineMH1}
\quad \normeinf{m} \leq \norme{T}.
\end{equation}
\end{thm}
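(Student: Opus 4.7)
The plan is to exploit the translation-invariance of $T$ to promote it to commutation with convolutions by $H^1(\R)$ functions, to read off the candidate symbol $m$ from the action of $T$ on one distinguished test function, and finally to verify the pointwise bound $|m(u_0)|\leq\norme{T}$ at each $u_0>0$ by testing against a modulated Fej\'er kernel.

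For the first step, the continuity of $s\mapsto\tau_s h$ from $\R$ into $H^1(\R)$ (with $\norme{\tau_s h}_1=\norme{h}_1$) allows me to realize any convolution $g\star h$, with $g\in L^1(\R)$ and $h\in H^1(\R)$, as a Bochner integral $\int_\R g(s)\tau_sh\,ds$ in $H^1(\R)$. Applying the bounded operator $T$ term by term and invoking the hypothesis $T\tau_s=\tau_s T$ yields $T(g\star h)=g\star Th$. Taking $g=h_1$ and $h=h_2$ in $H^1(\R)$ and using commutativity of convolution gives $h_1\star Th_2=Th_1\star h_2$, which on the Fourier side reads
\[
\widehat{h_1}(u)\,\widehat{Th_2}(u)=\widehat{Th_1}(u)\,\widehat{h_2}(u),\qquad u>0.
\]

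For the second step, I would fix once and for all a concrete $h_0\in H^1(\R)$ whose Fourier transform is continuous and nowhere vanishing on $(0,\infty)$—for instance $h_0(t)=(1-it)^{-2}$, for which $\widehat{h_0}(u)=2\pi u e^{-u}\chi_{(0,\infty)}(u)$—and set $m(u):=\widehat{Th_0}(u)/\widehat{h_0}(u)$ for $u>0$. Continuity of $m$ follows because $Th_0\in L^1(\R)$ has continuous Fourier transform and $\widehat{h_0}$ is continuous and nonzero, while the identity displayed above forces $\widehat{Th}=m\widehat{h}$ on $(0,\infty)$ for every $h\in H^1(\R)$; on $(-\infty,0]$ both sides vanish.

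The main obstacle is the third step, the estimate $\normeinf{m}\leq\norme{T}$: the standard $L^p$-multiplier argument concentrates an approximate identity at $u_0$ in the Fourier variable, but the constraint $\mathrm{supp}(\widehat{h})\subset[0,\infty)$ forbids any such symmetric concentration. The remedy is Fej\'er-kernel modulation. Fix $u_0>0$ and let $\psi$ be the Fej\'er kernel of parameter $u_0$, so that $\psi\geq 0$, $\norme{\psi}_1=1$, and $\widehat{\psi}$ is the triangular bump supported on $[-u_0,u_0]$ with $\widehat{\psi}(0)=1$. Set $h(t):=e^{iu_0t}\psi(t)$. Then $\widehat{h}(u)=\widehat{\psi}(u-u_0)$ is supported in $[0,2u_0]$, so $h\in H^1(\R)$ with $\norme{h}_1=1$ and $\widehat{h}(u_0)=1$. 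Consequently
\[
|m(u_0)|=|m(u_0)\widehat{h}(u_0)|=|\widehat{Th}(u_0)|\leq\norme{Th}_1\leq\norme{T},
\]
which completes the proof.
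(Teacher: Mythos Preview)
Your proof is correct. The first two steps---realizing convolution as a Bochner integral to pass from $T\tau_s=\tau_sT$ to $\widehat{h_1}\,\widehat{Th_2}=\widehat{Th_1}\,\widehat{h_2}$, then defining $m$ as the ratio $\widehat{Th_0}/\widehat{h_0}$ for a single $h_0$ with $\widehat{h_0}>0$ on $(0,\infty)$---match the paper's argument essentially line for line.

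The third step is where you diverge, and your route is considerably shorter. The paper proves $|m(1)|\leq\norme{T}$ by a delicate $\varepsilon\to 0$ argument: it introduces $h_\varepsilon(t)=\varepsilon e^{it}\gamma_1(\varepsilon t)$ and a dual test function $g_\varepsilon(t)=e^{-it}\gamma_2(\varepsilon t)$, computes $\int T(h_\varepsilon)g_\varepsilon$ on the Fourier side, and passes to the limit; it then transports this to arbitrary $u_0>0$ via the dilation $T_{(a)}=r_aTr_{1/a}$. Your modulated Fej\'er kernel bypasses both the limiting procedure and the dilation trick: for each $u_0>0$ you directly exhibit an $h\in H^1(\R)$ with $\norme{h}_1=1$ and $\widehat{h}(u_0)=1$, and the single line $|m(u_0)|=|\widehat{Th}(u_0)|\leq\norme{Th}_1\leq\norme{T}$ finishes. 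This is cleaner and more elementary; the paper's dilation argument has the minor conceptual advantage of isolating the scale-invariance of the problem, but nothing in the sequel depends on it.
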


\begin{proof}
Except the estimate (\ref{ineMH1}), this statement can be deduced from \cite[pp. 131-132]{BJORK},
and from the fact that  $\{h_1 + h_2(-\cdotp)\, :\, h_1, h_2\in H^1(\R)\}\subset L^1(\R)$ coincides
with the so-called real $H^1$-space
(see also \cite[Theorem 7.31]{GCRF}). We briefly prove the first part of
our statement for completeness and then focus on the proof of  (\ref{ineMH1}).

We use Bochner spaces
and Bochner integrals, for which we refer to \cite{du}.
Let $T \in B(H^1(\R))$ and assume that
$T$ commutes with translations. 

Let $h,g \in H^1(\R)$. The identification 
$L^1(\R^2) = L^1(\R;L^1(\R))$ and the
fact that 
$$
\int_{-\infty}^{\infty}\int_{-\infty}^{\infty}|h(t-s)||g(s)|\,dtds = \norme{h}_1\norme{g}_1\,<\infty
$$
imply that $s\mapsto g(s)\tau_sh$ is an almost everywhere defined function belonging to
the Bochner space $L^1(\R;L^1(\R))$. Since $\tau_sh$ belongs
to $H^1(\R)$ for all $s\in\R$, the latter is actually an element of 
$L^1(\R;H^1(\R))$.
Further its integral (which is an element of $H^1(\R)$)
is equal to
the convolution of $h$ and $g$, that is,
\begin{equation}\label{eq1}
h \star g = \int_{-\infty}^{\infty}\tau_s h\cdotp g(s) \,ds.
\end{equation}
It follows, by the assumption, that 
$$
T(h\star g) = \int_{-\infty}^{\infty} T(\tau_sh)g(s) \,ds\,
=  \int_{-\infty}^{\infty} \tau_s(Th)g(s)\, ds\,=
Th \star g.
$$ 
Likewise $T(h \star g) =h\star Tg$, whence $Th \star g= h \star Tg$. 
Applying the Fourier transform to the latter equality, one obtains
\begin{equation}\label{eq2}
\widehat{Th}\cdotp\widehat{g} = \widehat{h}\cdotp\widehat{Tg}.
\end{equation}

Now let $f \in \SR$ satisfying $f=0$ on $\R_-$ 
and $f > 0$ on $\R_+^*$.
The function $g = \mathcal{F}^{-1}(f)$
belongs to $H^1(\R)$ and we may define $m\colon \R_+^*\to\C$ by
\begin{equation}\label{eq3}
m(u) = \frac{\widehat{Tg}(u)}{\widehat{g}(u)}\,, \qquad u>0.
\end{equation}
Obviously $m$ is continuous. Furthermore it follows from
\eqref{eq2} that for any $h\in H^1(\R)$, we have 
$\widehat{Th}=m\widehat{h}$ on $\R_+^*$. It therefore suffices to show 
that $m$ is bounded and that (\ref{ineMH1}) holds true.

For $h\in H^1(\R)$ and $g\in \S(\R)$, one has
$$
\int_{-\infty}^\infty m(u) \widehat{h}(u) \widehat{g}(u)\, du\,
= \int_{-\infty}^\infty \bigl[\widehat{T(h)}\bigr](u)\widehat{g}(u)\, du\,
=2\pi \int_{-\infty}^\infty [T(h)](-t) g(t)\, dt,
$$
by Lemma \ref{Tool}. This implies that
$$
\Bigl\vert \int_{-\infty}^\infty m(u) \widehat{h}(u)\widehat{g}(u)\, du\,\Bigr\vert\leq
2\pi\norm{T}\norm{h}_1\norm{g}_\infty.
$$
Replacing $g$ by $g_1\star g_2$ for $g_1,g_2\in \S(\R)$ and using 
$\norm{g_1\star g_2}_\infty\leq \norm{g_1}_2\norm{g_2}_2$, we deduce that
$$
\Bigl\vert \int_{-\infty}^\infty m(u) \widehat{h}(u)\widehat{g_1}(u)
\widehat{g_2}(u)\, du\,\Bigr\vert\leq
2\pi\norm{T}\norm{h}_1\norm{g_1}_2\norm{g_2}_2=
\norm{T}\norm{h}_1\norm{\widehat{g_1}}_2\norm{\widehat{g_2}}_2.
$$
This implies $\norm{m\widehat{h}\widehat{g_1}}_2\leq\norm{T}\norm{h}_1
\norm{\widehat{g_1}}_2$ for all 
$g_1\in \S(\R)$ and hence
\begin{equation}\label{Estim}
\norm{m\widehat{h}}_\infty\leq\norm{T}\norm{h}_1, 
\end{equation}
for all $h\in H^1(\R)$.

Now fix a function $\varphi\in\S(\R)$ such that $\varphi\geq 0$,
${\rm Supp}(\widehat{\varphi})\subset[-1,1]$ and $\widehat{\varphi}(0)=1$.
Let $a>0$ and define
$\varphi_a,h_a\colon\R\to\C$ by $\varphi_a(t)=  a \varphi(at)$ and 
$h_a(t) = e^{ita}\varphi_a(t)$. Then 
$$
\widehat{\varphi_a}(u)=\widehat{\varphi}
\Bigl(\frac{u}{a}\Bigr)
\quad\hbox{and}\qquad
\widehat{h_a}(u) =\widehat{\varphi}\Bigl(\frac{u-a}{a}\Bigr),
$$ 
for all $u\in\R$.
Hence $h_a\in H^1(\R)$ and  $\widehat{h_a}(a) =1$.
Furthermore, $\varphi_a\geq 0$,
hence $\norm{\varphi_a}_1=\widehat{\varphi_a}(0) = \widehat{\varphi}(0)=1$.
Since $\norm{h_a}_1=\norm{\varphi_a}_1$, we obtain that 
$\norm{h_a}_1=1$. 
Applying (\ref{Estim}) with $h=h_a$ then yields
$$
\vert m(a)\vert = \vert m(a)\widehat{h_a}(a)\vert \leq 
\norm{m\widehat{h_a}}_\infty\leq\norm{T}.
$$
This proves the boundedness of $m$ and (\ref{ineMH1}).
\end{proof}

\begin{rq1}\label{MHP}
\ 

\smallskip (1) 
Let $1<p<\infty$.
Let $S\colon L^p(\R)\to L^p(\R)$
be a bounded Fourier multiplier. Then $S$ maps 
$H^p(\R)$ into itself and the restriction
$S_{\vert H^p}\colon H^p(\R)\to H^p(\R)$ is a
bounded Fourier multiplier.

Let 
$Q\colon L^p(\R)\to H^p(\R)$ be the Riesz projection and let
$J\colon H^p(\R)\to L^p(\R)$ be the canonical embedding.
Then conversely, for any bounded Fourier multiplier 
$T\colon H^p(\R)\to H^p(\R)$, 
$S=JTQ$ is a bounded Fourier multiplier on $L^p(\R)$,
whose restriction to $H^p(\R)$ coincides with $T$. 
Thus $\MHp$ can be simply regarded
as a subspace of ${\mathcal M}(L^p(\R))$, the space of
bounded Fourier multipliers on $L^p(\R)$.

It is well-known that a bounded operator $L^p(\R)\to L^p(\R)$
is a Fourier multiplier if and only if it commutes with tranlations.
Using the above reasoning, we deduce that 
a bounded operator $H^p(\R)\to H^p(\R)$
belongs to $\MHp$ if and only if it
commutes with translations.

\smallskip (2)
Let $p'=\frac{p}{p-1}$ be the conjugate exponent of 
$1<p<\infty$. 
Using $Q$ again, we see that given any 
$m\in L^\infty(\R_+)$, the operator
$T_m\colon H^2(\R)\to H^2(\R)$ extends 
to a bounded Fourier multiplier on $H^p(\R)$ if and only if it 
extends to a bounded Fourier multiplier on $H^{p'}(\R)$. Thus,
\begin{equation}\label{duality}
\MHp\simeq\mathcal{M}(H^{p'}(\R))
\end{equation}
isomorphically.

\smallskip (3) 
Recall that the bounded Fourier multipliers on $L^1(\R)$
are the operators of the form $h\mapsto \mu\star h$, with 
$\mu\in M(\R)$, and that the norm of the latter operator
is equal to $\norm{\mu}_{M(\tiny{\R})}$ (see e.g.
\cite[Chapter I, Theorem 3.19]{sw}).

For any $\mu\in M(\R)$, let $R_\mu\colon H^1(\R)\to H^1(\R)$
be the restriction of $h\mapsto \mu\star h$ to $H^1(\R)$.
This is a bounded Fourier multiplier whose symbol is equal
to the restriction of $\widehat{\mu}$ to $\R_+^*$.
We set 
\begin{equation}\label{Trivial}
{\mathcal R} = \{ R_\mu\, :\, \mu\in M(\R)\}\,\subset \MH.
\end{equation}
In contrast with the result in part (1) of this remark, we have
$$
{\mathcal R}\not=\MH.
$$ 
Indeed this follows from \cite[Remark (ii)]{Gau}.
\end{rq1}

The following lemma will play a crucial role.

\begin{lem}\label{noMpisnotM1}
For any $1\leq p<\infty$, we have $\MH\subset \MHp$.
\end{lem}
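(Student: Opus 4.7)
The plan is to dispatch $p=1$ trivially, establish the endpoint $p=2$ using Theorem \ref{thmmutlH1}, interpolate to cover $1<p<2$, and invoke duality for $2<p<\infty$.

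First I would take $T\in\MH$ with symbol $m$. By Theorem \ref{thmmutlH1}, $m$ is bounded continuous on $\R_+^*$ with $\norm{m}_\infty\leq\norm{T}$; hence $T_m$ extends canonically to a bounded operator on $H^2(\R)$ with norm $\norm{m}_\infty$. The two definitions of $T_m$ (on $H^1(\R)$ and on $H^2(\R)$) agree on the dense intersection $H^1(\R)\cap H^2(\R)$ because both are implemented by Fourier multiplication by the same function $m$. This prepares the ground for a standard complex interpolation argument.

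For $1<p<2$, the plan is to invoke the classical complex interpolation identity $[H^1(\R),H^2(\R)]_\theta = H^p(\R)$ with $\frac{1}{p}=1-\frac{\theta}{2}$, to obtain a bounded extension of $T_m$ on $H^p(\R)$ of norm at most $\norm{T}^{1-\theta}\norm{m}_\infty^\theta\leq \norm{T}$. For $2<p<\infty$, the duality isomorphism from Remark \ref{MHP}(2), $\MHp\simeq\mathcal{M}(H^{p'}(\R))$, transfers the boundedness on $H^{p'}(\R)$ already obtained (since $p'\in(1,2)$) into boundedness on $H^p(\R)$. Thus the inclusion $\MH\subset\MHp$ is established uniformly in $p\in[1,\infty)$.

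The main obstacle I expect to face is citing the complex interpolation identity for the \emph{analytic} Hardy spaces on $\R$ in the form used by this paper (the subspace of $L^p(\R)$ whose Poisson extension is analytic). This identity is classical and can be extracted from \cite{gar} or \cite{koo}, but some care is needed to match conventions: one should verify that on a common dense subspace (for instance, Schwartz functions whose Fourier transform is supported in $\R_+^*$) the operator $T_m$ is unambiguously defined, so that Calder\'on's complex interpolation functor applies directly to the pair of bounds on $H^1(\R)$ and $H^2(\R)$ without any consistency issue.
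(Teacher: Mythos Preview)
Your proposal is correct and follows essentially the same route as the paper: reduce to $p\in(1,2)$ by the duality $\MHp\simeq\mathcal{M}(H^{p'}(\R))$ and then apply complex interpolation $[H^1(\R),H^2(\R)]_\theta\simeq H^p(\R)$ (for which the paper cites \cite[Theorem 4.3]{pis4}). One minor remark: your appeal to Theorem \ref{thmmutlH1} to get $m\in L^\infty$ is unnecessary, since by the paper's definition every element of $\MH$ is of the form $T_m$ with $m\in L^\infty(\R_+)$, so the inclusion $\MH\subset\mathcal{M}(H^2(\R))$ is immediate.
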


\begin{proof}
By definition we have
$\MH\subset \mathcal{M}(H^{2}(\R))$.
By (\ref{duality}) we may assume that $p\in (1,2)$.
Let $\theta = 2\bigl(1-\frac{1}{p}\bigr)$. Then in the complex interpolation
method, we have
\[
[H^1(\R),H^{2}(\R)]_{\theta} \simeq H^p(\R),
\]
by \cite[Theorem 4.3]{pis4}. The result follows at once.
\end{proof}

\section{Algebras $\A_0$ and $\A$}\label{AA0}
We introduce and study new algebras of functions which will be used in Section \ref{CF} to establish a
functional calculus for negative generators of bounded $C_0$-semigroups on Hilbert space.
The next definitions are insprired by \cite{pel1},
see also the ``Notes and Remarks on Chapter 6" in
\cite{pis1}.

\subsection{Definitions and properties}\label{DandP}

\begin{df}\label{defA}
We let $\mathcal{A}_0(\R)$ (resp. $\mathcal{A(\R)}$) 
be the set of all functions $F
\colon \R \rightarrow \C$ such that there exist two sequences $(f_k)_{k\in \N}$ in $C_0(\R)$  
(resp. $BUC(\R)$) and $(h_k)_{k\in \N}$  in $H^1(\R)$ satisfying
\begin{equation}\label{ineA}
\sum_{k=1}^{\infty} \normeinf{f_k}\norme{h_k}_1 < \infty
\end{equation} 
and
\begin{equation}\label{equA2}
\forall\,s \in \R, \qquad F(s) = \sum_{k=1}^{\infty} (f_k\star h_k)(s).
\end{equation}
\end{df}

For any $f\in L^\infty(\R)$ and any $h\in H^1(\R)$, 
$f\star h$ belongs to $H^\infty(\R)$. To prove this, let $k\in H^1(\R)\cap 
H^\infty(\R)$. By Fubini's theorem, and Lemma \ref{H1HInf},
$$
\int_{-\infty}^\infty (f\star h)(t)k(t)\, dt\,
=\int_{-\infty}^{\infty} f(u)\Bigl(\int_{-\infty}^\infty h(t-u)k(t)\, dt\Bigr)du = 0.
$$
Indeed for all $u\in\R$, $h(\,\cdotp -u)\in H^1(\R)$ hence
the integral of $h(\,\cdotp -u)k$ is equal to $0$.
Since $ H^1(\R)\cap 
H^\infty(\R)$ is dense in $H^1(\R)$, this implies that 
$\int_{-\infty}^\infty (f\star h)(t)k(t)\, dt\,=0$ for all $k\in H^1(\R)$.
By Lemma \ref{H1HInf}, this proves that $f\star h\in H^\infty(\R)$.

We deduce that for any $f\in C_0(\R)$ (resp. $BUC(\R)$) and any $h\in H^1(\R)$,
$f\star h$ belongs to $C_0(\R)\cap H^\infty(\R)$ (resp. 
$BUC(\R)\cap H^\infty(\R)$). Further for any $(f_k)_{k\in \N}$  
and $(h_k)_{k\in \N}$ as in Definition \ref{defA}, we have
$\normeinf{f_k \star h_k } \leq \normeinf{f_k}\norme{h_k}_1$, 
and hence
$\sum_{k=1}^{\infty} \normeinf{f_k\star h_k} < \infty\,$,
by \eqref{ineA}. This 
ensures the convergence of the series in \eqref{equA2}
and implies that 
$$
\A_0(\R) \subset C_0(\R)\cap H^\infty(\R)\qquad\hbox{and}\qquad
\A(\R) \subset BUC(\R)\cap H^\infty(\R).
$$

\begin{df}\label{defNA}
For all $F \in \A_0(\R)$ (resp. $F \in \A(\R)$), we set
\[
\norme{F}_{\A_0} = \inf \Big\{\sum_{k=1}^{\infty} \normeinf{f_k}\norme{h_k}_1 \Big\}
\qquad
\big(\text{resp.} 
\norme{F}_{\A} = \inf \Big\{\sum_{k=1}^{\infty} \normeinf{f_k}\norme{h_k}_1 \Big\} \big),
\]	 
where the infimum runs over all sequences  $(f_k)_{k\in \N}$ 
in $C_0(\R)$ (resp. $BUC(\R)$)  and 
$(h_k)_{k\in \N}$ in $H^1(\R)$ satisfying \eqref{ineA} and \eqref{equA2}. 
\end{df}

It is clear that 
$$
\norme{F}_\infty\leq \norme{F}_{\A},\qquad 
F \in \A(\R).
$$

To show that $\norme{\cdot}_{\A_0}$ and $\norme{\cdot}_{\A}$ are complete norms,
we make a connection with projective tensor products, which will  be useful
throughout this section.

If $X$ and $Y$ are any Banach spaces, the 
projective norm of $\zeta \in X \otimes Y$ is defined by 
\[
\norme{\zeta}_{\wedge} = \inf\Bigl\{ \sum_k \norme{x_k}\norme{y_k} \Bigr\},
\] 
where the infimum runs over all finite families $(x_k)_k$ in $X$ and $(y_k)_k$ in $Y$ satisfying 
$\zeta = \sum_k x_k \otimes y_k\,$.
The completion of $(X\otimes Y, \norme{\,\cdotp}_{\wedge})$, denoted by $X\widehat{\otimes} Y$, 
is called the projective tensor product of $X$ and $Y$. 

Let $Z$ be a third Banach space. 
To any $\ell\in B_2(X\times Y, Z)$, the space of bounded bilinear maps 
from $X\times Y$ into $Z$,
one can associate 
a linear map $\overset{\circ}{\ell}\colon X\otimes Y\to Z$
by the formula
$$
\overset{\circ}{\ell}(x\otimes y)=\ell(x,y),\qquad  x\in X,\, y\in Y.
$$
Then $\overset{\circ}{\ell}$ extends to a  bounded operator (still denoted by)
$\overset{\circ}{\ell}\colon X\widehat{\otimes} Y\to Z$, with $\norm{\overset{\circ}{\ell}}=\norm{\ell}$.
Further the mapping $\ell\mapsto\overset{\circ}{\ell}$ yields 
an isometric identification 
\begin{equation}\label{biliproj}
B_2(X\times Y, Z) \simeq B(X \widehat{\otimes} Y,Z).
\end{equation}
Let us apply the above property in the case 
when $Z=\C$. Using the standard identification $B_2(X\times Y, \C)=B(Y,X^*)$,
we obtain 
an isometric identification 
\begin{equation}\label{isomidenEprojF}
(X\widehat{\otimes}Y)^* \simeq  B(Y,X^*).
\end{equation}
We refer to either \cite[Chapter 8, Theorem 1 $\&$ 
Corollary 2]{du} or \cite[Theorem 2.9]{Ryan}
for these classical facts.

Consider the bilinear map $\sigma \colon C_0(\R) \times H^1(\R) \rightarrow C_0(\R)$ 
defined by 
\begin{equation}\label{sigma}
\sigma(f,h) = f\star h,\qquad f\in C_0(\R),\ h\in H^1(\R).
\end{equation}
Applying (\ref{biliproj}), let
$$
\overset{\circ}{\sigma}\colon C_0(\R)\widehat{\otimes} H^1(\R)\longrightarrow C_0(\R)
$$
be associated with $\sigma$. Then $\A_0(\R) = {\rm Ran}(\overset{\circ}{\sigma})$.
Through the resulting linear isomorphism between $\A_0(\R)$ and
$(C_0(\R)\widehat{\otimes} H^1(\R))/{\rm Ker}(\overset{\circ}{\sigma})$, 
$\norme{\,\cdotp}_{\A_0}$ corresponds to the quotient norm on the latter space (this
follows from either \cite[Chapter 8, Proposition 9 (b)]{du} 
or \cite[Proposition 2.8]{Ryan}). 
Thus $(\A_0(\R),\norme{\,\cdotp}_{\A_0})$ is a Banach space and $\overset{\circ}{\sigma}$
induces  
an isometric identification 
\begin{equation}\label{quotient}
\A_0(\R) \simeq \frac{C_0(\R)
\widehat{\otimes} H^1(\R)}{{\rm Ker}(\overset{\circ}{\sigma})}\,.
\end{equation}
Similarly, $\norme{\,\cdotp}_{\A}$ is a norm on $\A(\R)$
and $(\A(\R),\norme{\,\cdotp}_{\A})$ is a Banach space.

\begin{rq1}\label{rqA0}
It is clear from Definition \ref{defNA} that $\A_0(\R)\subset \A(\R)$
and that for any $F \in \A_0(\R)$, we have
\begin{equation}\label{comparnormA}
\norme{F}_{\A} \leq \norme{F}_{\A_0}.
\end{equation}
We will show in Proposition \ref{normequA0} below that this inequality is actually an equality.
\end{rq1}

\subsection{$\A$ and $\A_0$ are Banach algebras}\label{BA}

\begin{prop}\label{propAlg}
The spaces $\A_0(\R)$ and $\A(\R)$ are Banach algebras for the pointwise multiplication. 
Furthermore, $\A_0(\R)$ is an ideal of $\A(\R)$ and for 
any $F \in \A(\R)$ and $G \in \A_0(\R)$, we have
\begin{equation}\label{ideal}
\norme{FG}_{\A_0} \leq \norme{F}_{\A} \norme{G}_{\A_0}. 
\end{equation}
\end{prop}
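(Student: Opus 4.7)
The plan is to reduce the whole proposition to one single-term estimate: for $f_1, f_2\in BUC(\R)$ (with $f_2\in C_0(\R)$ in the ideal case) and $h_1, h_2\in H^1(\R)$,
\[
\norme{(f_1\star h_1)(f_2\star h_2)}_{\A}\,\leq\,\normeinf{f_1}\normeinf{f_2}\norme{h_1}_1\norme{h_2}_1,
\]
with $\norme{\,\cdotp}_{\A_0}$ on the left-hand side when $f_2\in C_0(\R)$. Once this is in hand, bilinearity allows me to expand $FG$, for any representations $F=\sum_k f_k\star h_k$ of $F\in\A(\R)$ and $G=\sum_j g_j\star l_j$ of $G\in\A(\R)$ (resp.\ $\A_0(\R)$), as an absolutely convergent double sum, and summing the single-term estimates then taking infima over representations immediately yields submultiplicativity of $\norme{\,\cdotp}_\A$ on $\A(\R)$, of $\norme{\,\cdotp}_{\A_0}$ on $\A_0(\R)$, and the mixed bound \eqref{ideal} (the last by requiring $g_j\in C_0(\R)$).

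The core calculation is the identity
\[
(f_1\star h_1)(s)\,(f_2\star h_2)(s)\,=\,\int_{\R}\bigl[(\tau_w f_1\cdot f_2)\star(\tau_{-w}h_1\cdot h_2)\bigr](s)\,dw,\qquad s\in\R,
\]
which I would derive from the absolutely convergent double integral $\int\!\int f_1(s-u)f_2(s-v)h_1(u)h_2(v)\,du\,dv$ via the change of variables $(u,v)\mapsto(v+w,v)$ of unit Jacobian and Fubini. Writing $\phi_w:=\tau_w f_1\cdot f_2$ and $\eta_w:=\tau_{-w}h_1\cdot h_2$, one has $\phi_w\in BUC(\R)$ (resp.\ $C_0(\R)$ if $f_2\in C_0(\R)$) with $\normeinf{\phi_w}\leq\normeinf{f_1}\normeinf{f_2}$, so the only delicate point is $\eta_w$.

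For the latter I would apply Fubini to the non-negative function $(w,v)\mapsto|h_1(v+w)\,h_2(v)|$: its $L^1(\R^2)$-norm equals $\norme{h_1}_1\norme{h_2}_1$, whence for almost every $w$ one has $\eta_w\in L^1(\R)$, together with $\int_\R\norme{\eta_w}_1\,dw=\norme{h_1}_1\norme{h_2}_1$. Since the Fourier transform of $\eta_w$ is the convolution of two functions supported in $\R_+$, it is itself supported in $\R_+$, so $\eta_w\in H^1(\R)$ for a.e.\ $w$. Each $\phi_w\star\eta_w$ is therefore a bona fide single defining term of $\A(\R)$ (resp.\ $\A_0(\R)$), and Definition~\ref{defNA} gives $\norme{\phi_w\star\eta_w}_\A\leq\normeinf{f_1}\normeinf{f_2}\norme{\eta_w}_1$.

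The last step is to interpret the above integral as a Bochner integral with values in $\A(\R)$ (resp.\ $\A_0(\R)$). Measurability of $w\mapsto\eta_w$ into $H^1(\R)$ comes from the canonical identification $L^1(\R^2)\simeq L^1(\R;L^1(\R))$ and the closedness of $H^1$ in $L^1$; continuity of $w\mapsto\phi_w$ into $BUC(\R)$ (resp.\ $C_0(\R)$) is standard since $f_1\in BUC(\R)$; and the convolution pairing $(f,h)\mapsto f\star h$ is bounded bilinear of norm $\leq 1$ from $BUC(\R)\times H^1(\R)$ into $\A(\R)$ (resp.\ $C_0(\R)\times H^1(\R)$ into $\A_0(\R)$). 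Thus $w\mapsto\phi_w\star\eta_w$ is Bochner integrable into $\A$ with total mass at most $\normeinf{f_1}\normeinf{f_2}\norme{h_1}_1\norme{h_2}_1$, and since point evaluations are continuous on $\A\subset BUC(\R)$, the value at $s$ of the Bochner integral coincides with the scalar integral, which by our identity equals $(f_1\star h_1)(s)(f_2\star h_2)(s)$. The main obstacle is that $\eta_w=\tau_{-w}h_1\cdot h_2$ is a pointwise product of two $L^1$-functions and need not lie in $L^1(\R)$ for any prescribed $w$; the Fubini argument circumvents this by securing $L^1$-integrability only almost everywhere in $w$ while still controlling the total $w$-mass $\int_\R\norme{\eta_w}_1\,dw$ by the finite quantity $\norme{h_1}_1\norme{h_2}_1$.
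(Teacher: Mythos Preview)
Your proof is correct and takes essentially the same approach as the paper: both express $(f_1\star h_1)(f_2\star h_2)$ as a Bochner integral $\int_\R (\phi_w\star\eta_w)\,dw$ with values in $\A_0(\R)$ (resp.\ $\A(\R)$), bound it via Fubini by $\normeinf{f_1}\normeinf{f_2}\norme{h_1}_1\norme{h_2}_1$, and then pass to general $F,G$ by summing over series representations. The only difference is cosmetic --- the paper parametrizes with $\varphi_s(t)=f_1(t)f_2(t-s)$ and $\psi_s(t)=h_1(t)h_2(t+s)$, which merely swaps the roles of the two factors relative to your $\phi_w,\eta_w$.
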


\begin{proof}
We will adapt an idea from \cite{pel1}.
Let $f_1, f_2 \in C_0(\R)$ and $h_1,h_2 \in H^1(\R)$. We note that
\begin{equation}\label{eqL1L1}
\int_{-\infty}^{\infty}\int_{-\infty}^{\infty}|h_1(t)||h_2(t+s)|
\,dtds = \norme{h_1}_1\norme{h_2}_1.
\end{equation}
We define, for $s\in \R$, $\varphi_s\colon \R \rightarrow \C$ and $\psi_s 
\colon \R \rightarrow \C$ by
\[
\varphi_s(t) = f_1(t)f_2(t-s) 
\qquad\hbox{and}\qquad
\psi_s(t) = h_1(t)h_2(t+s).
\] 
Since $f_2$ is uniformly continuous, the function
$s \mapsto \varphi_s$ is continuous from $\R$ into $C_0(\R)$.
Thus $s \mapsto \varphi_s$ belongs to 
the Bochner space $L^{\infty}(\R; C_0(\R))$. 
Using (\ref{eqL1L1}) and arguing as 
at the beginning of the proof of Theorem \ref{thmmutlH1}, we see that
$s \mapsto \psi_s$ belongs to $L^1(\R; H^1(\R))$. 
It follows that $s \mapsto \varphi_s\star \psi_s$ is defined almost everywhere
and belongs to $L^{1}(\R; \A_0(\R))$. Moreover, 
\begin{equation}\label{inealgA0}
\int_{-\infty}^{\infty}\norme{\varphi_s\star \psi_s}_{\A_0}ds 
\leq \normeinf{f_1}\normeinf{f_2}\norme{h_1}_1\norme{h_2}_1.
\end{equation}
Indeed, using (\ref{eqL1L1}) and Fubini's theorem,
\begin{align*}
\int_{-\infty}^{\infty}\norme{\varphi_s\star \psi_s}_{\A_0}ds\,
&\leq \int_{-\infty}^{\infty} \norme{\varphi_s}_\infty\norme{\psi_s}_1 ds\\
&\leq \normeinf{f_1}\normeinf{f_2}\int_{-\infty}^{\infty}\norme{\psi_s}_1 ds,
\end{align*}
which is equal to the right-hand side of (\ref{inealgA0}).

The integral of $s \mapsto \varphi_s\star \psi_s$ is an element of
$\A_0(\R)$. We claim that we actually have
\begin{equation}\label{eqalgA0}
\int_{-\infty}^{\infty}\varphi_s\star \psi_s\, ds = (f_1\star h_1)(f_2\star h_2).
\end{equation}
Indeed, using again \eqref{eqL1L1} and Fubini's theorem, we have for all $u \in \R$,
\begin{align*}
\int_{-\infty}^{\infty}(\varphi_s\star \psi_s)(u)ds &= \int_{-\infty}^{\infty}
\int_{-\infty}^{\infty}f_1(t)f_2(t-s)h_1(u-t)h_2(u-t+s)\,dtds \\
& =  \int_{-\infty}^{\infty}f_1(t)h_1(u-t)\int_{-\infty}^{\infty}f_2(t-s)h_2(u-(t-s))\,dsdt \\
&= (f_1\star h_1)(u)(f_2\star h_2)(u).
\end{align*}
Combining (\ref{eqalgA0}) and (\ref{inealgA0}), we obtain
that  $(f_1\star h_1)(f_2\star h_2) \in \A_0(\R)$, with
\[
\norme{(f_1\star h_1)(f_2\star h_2)}_{\A_0} \leq 
\normeinf{f_1}\normeinf{f_2}\norme{h_1}_1\norme{h_2}_1.
\]

Now let $F,G \in \A_0(\R)$ and let $\varepsilon > 0$.
Consider sequences $(f^1_k)_{k\in \N}$, $(f^2_k)_{k\in \N}$ in
$C_0(\R)$ and $(h^1_k)_{k\in \N}$, $(h^2_k)_{k\in \N}$ in $H^1(\R)$ such that 
\[
F = \sum_{k=1}^{\infty} f^1_k\star h^1_k \qquad \text{and} \qquad \sum_{k=1}^{\infty} 
\normeinf{f^1_k}\norme{h^1_k}_1 \leq \norme{F}_{\A_0} + \varepsilon, 
\]
as well as
\[
G = \sum_{k=1}^{\infty} f^2_k\star h^2_k \qquad \text{and} 
\qquad \sum_{k=1}^{\infty} \normeinf{f^2_k}\norme{h^2_k}_1 \leq \norme{G}_{\A_0} + \varepsilon.
\]
Then, using summation in $C_0(\R)$, we have
$$
FG = \sum_{k,l=1}^{\infty}(f^1_k\star h^1_k) (f^2_l\star h^2_l).
$$
Further $(f^1_k\star h^1_k)(f^2_l\star h^2_l)\in\A_0(\R)$ for all $k,l\geq 1$,
and
\begin{align*}
\sum_{k,l=1}^{\infty}\norme{(f^1_k\star h^1_k) (f^2_l\star h^2_l)}_{\A_0}\,
&\leq \sum_{k,l=1}^{\infty} 
\normeinf{f^1_k}\normeinf{f^2_l}\norme{h^1_k}_1\norme{h^2_l}_1\\
& = \Bigl(\sum_{k=1}^{\infty} 
\normeinf{f^1_k}\norme{h^1_k}_1\Bigr)\,\Bigl(\sum_{l=1}^{\infty} 
\normeinf{f^2_l}\norme{h^2_l}_1\Bigr)\\
& \leq \bigl(\norme{F}_{\A_0} + \varepsilon\bigr)
\bigl(\norme{G}_{\A_0} + \varepsilon\bigr).
\end{align*}
Since $\A_0(\R)$ is complete, this shows that $FG\in\A_0(\R)$. Since $\varepsilon > 0$
is arbitrary, we obtain that $\norme{FG}_{\A_0} \leq \norme{F}_{\A_0} \norme{G}_{A_0}$.
This shows that $\A_0(\R)$ is a Banach algebra.

Analogously, $\A(\R)$ is a Banach algebra. 
Moreover if $f_1 \in BUC(\R)$ and $f_2 \in C_0(\R)$, then for each $s\in \R$, 
$\varphi_s\colon t\mapsto f_1(t)f_2(t-s)$ belongs
to $C_0(\R)$. Hence the computations above 
show that $\A_0(\R)$ is an ideal of $\A(\R)$, as well as \eqref{ideal}.
\end{proof}

We note that the Banach algebra $\A_0(\R)$ can be naturally regarded
as an $H^1(\R)$-version of the Figa-Talamanca-Herz algebras
$A_p(\R)$, $1<p<\infty$
(see e.g. \cite[Chapter 3]{der}).

\subsection{Duality results and consequences}\label{Duality}
The main aim of this subsection is to identify $\A_0(\R)^*$ with a subspace of 
$\MH$, the space of bounded Fourier multipliers on $H^1(\R)$. 
This requires the use of duality tools.

We let $H^1_{-}(\R)$ (resp. $H^\infty_{-}(\R)$) be the subspace
of all $f$ in $L^1(\R)$ (resp. $L^\infty(\R)$) such that 
$f(-\,\cdotp)$ belongs to $H^1(\R)$ (resp. $H^\infty(\R)$).
Recall the identification $M(\R)\simeq C_0(\R)^*$ provided by 
(\ref{Riesz}) and 
regard $H^1(\R)\subset L^1(\R)\subset M(\R)$ in the usual way. 
We have
\begin{equation}\label{Ortho-H1}
(C_0(\R)\cap H^\infty_{-}(\R))^\perp = H^1(\R).
\end{equation}
Indeed the inclusion $\subset$ follows from \cite[Chapter II, Theorem 3.8]{gar}
whereas the reverse inclusion follows from Lemma \ref{H1HInf}.

Set $Z_0 : = C_0(\R)\cap H^\infty_{-}(\R)$ for convenience, then the above result 
yields an isometric identification
\begin{equation}\label{H1Dual}
\biggr(\frac{C_0(\R)}{Z_0}\biggl)^*\,\simeq\, H^1(\R).
\end{equation}
In the sequel, we let $\dot{f}\in \frac{C_0(\R)}{Z_0}$ denote the
class of any $f\in C_0(\R)$.

We note that for any $f\in C_0(\R)$, $h\in H^1(\R)$ and $s\in\R$, we have
\begin{equation}\label{Form1}
(f\star h)(s) = \langle \tau_{-s}h, f\rangle.
\end{equation}
Thus $f\star h=0$ for any $f\in Z_0$ and $h\in H^1(\R)$.
The bilinear map $\sigma\colon C_0(\R)\times H^1(\R)\to C_0(\R)$
defined by (\ref{sigma}) therefore induces a bilinear 
map $\delta \colon \frac{C_0(\R)}{Z_0}\,\times H^1(\R)\to C_0(\R)$ given by
$$
\delta(\dot{f},h) = f\star h, \qquad f\in C_0(\R),\ h\in H^1(\R).
$$
Let 
\begin{equation}\label{delta}
\overset{\circ}{\delta} \colon \frac{C_0(\R)}{Z_0} \widehat{\otimes} H^1(\R) \longrightarrow C_0(\R)
\end{equation}
be the bounded map induced by $\delta$. Then the argument leading to (\ref{quotient}) shows as well
that $\A_0(\R)$ is equal to the range of $\overset{\circ}{\delta}$ and that
\begin{equation}\label{quotient2}
\A_0(\R) \simeq \Bigl(\frac{C_0(\R)}{Z_0}\widehat{\otimes} H^1(\R)\Bigr)\Big/{\rm Ker}(\overset{\circ}{\delta}).
\end{equation}

Since $H^1(\R)$ is a dual space, by (\ref{H1Dual}), $B(H^1(\R))$ is a dual space 
as well. Indeed applying (\ref{isomidenEprojF}), we have an isometric identification
\begin{equation}\label{BH1-dual}
B(H^1(\R))\simeq \Bigl(\frac{C_0(\R)}{Z_0}\widehat{\otimes} H^1(\R)\Bigr)^*.
\end{equation}
If we unravel the identifications leading to (\ref{BH1-dual}), we obtain that
the latter is given by
\begin{equation}\label{BH1-dual+}
\langle T, \dot{f}\otimes h\rangle\,=\, \int_{-\infty}^\infty 
(Th)(t)f(-t)\, dt,
\qquad T\in B(H^1(\R)),\ f\in C_0(\R),\ h\in H^1(\R).
\end{equation}

\begin{lem}\label{MH-closed}
The space $\MH$ is $w^*$-closed in $B(H^1(\R))$.
\end{lem}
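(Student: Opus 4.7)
The plan is to use the characterization of $\MH$ given by Theorem \ref{thmmutlH1}: an operator $T\in B(H^1(\R))$ lies in $\MH$ if and only if $T\tau_s=\tau_sT$ for every $s\in\R$. Thus $\MH$ is the joint kernel of the linear maps $T\mapsto T\tau_s-\tau_sT$ indexed by $s\in\R$, and it will suffice to show that each such map is $w^*$-continuous. Equivalently, for every $s\in\R$, every $f\in C_0(\R)$ and every $h\in H^1(\R)$, the scalar functional
\[
\phi_{s,f,h}(T) := \langle T\tau_s-\tau_sT,\dot{f}\otimes h\rangle
\]
should be $w^*$-continuous on $B(H^1(\R))$, since then $\MH=\bigcap_{s,f,h}\ker\phi_{s,f,h}$ is $w^*$-closed.

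Next I would compute $\phi_{s,f,h}$ directly from the pairing formula (\ref{BH1-dual+}). One side is immediate: $\langle T\tau_s,\dot{f}\otimes h\rangle=\langle T,\dot{f}\otimes\tau_sh\rangle$. For the other side, the change of variable $u=t-s$ in $\int (\tau_s Th)(t)f(-t)\,dt$ combined with the identity $\tau_sf(-u)=f(-u-s)$ yields $\langle \tau_sT,\dot{f}\otimes h\rangle=\langle T,\dot{\tau_sf}\otimes h\rangle$. Hence
\[
\phi_{s,f,h}(T)=\bigl\langle T,\,\dot{f}\otimes\tau_sh-\dot{\tau_sf}\otimes h\,\bigr\rangle,
\]
which is manifestly $w^*$-continuous, being the pairing of $T$ against a fixed element of the predual $\frac{C_0(\R)}{Z_0}\widehat{\otimes}H^1(\R)$.

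The only nontrivial point to verify is that the assignment $f\mapsto \dot{\tau_sf}$ descends to a well-defined (bounded) map on the quotient $C_0(\R)/Z_0$, i.e.\ that $\tau_s$ preserves $Z_0=C_0(\R)\cap H^\infty(\R)$. This is true because for real $s$ the half-plane $P_+$ is invariant under $z\mapsto z-s$, so the analytic Poisson extension of $\tau_sf$ to $P_+$ is simply the shift of that of $f$; in particular $\tau_sH^\infty(\R)=H^\infty(\R)$. Beyond this minor bookkeeping I do not foresee any substantive obstacle: the lemma reduces to combining the characterization in Theorem \ref{thmmutlH1} with the explicit form (\ref{BH1-dual+}) of the predual pairing.
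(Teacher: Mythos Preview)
Your proof is correct and follows essentially the same approach as the paper: both use Theorem~\ref{thmmutlH1} to identify $\MH$ with the commutant of the translations and then exploit the explicit pairing (\ref{BH1-dual+}) via the identities $\langle \tau_sT,\dot f\otimes h\rangle=\langle T,\dot{\tau_sf}\otimes h\rangle$ and $\langle T\tau_s,\dot f\otimes h\rangle=\langle T,\dot f\otimes\tau_sh\rangle$. Your version is slightly more streamlined, since by writing $\phi_{s,f,h}$ directly as evaluation at a fixed predual element you avoid the paper's detour through bounded nets.
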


\begin{proof}
According to Theorem \ref{thmmutlH1}, an operator $T\in B(H^1(\R))$ belongs
to $\MH$ if and only if $\tau_sT=T\tau_s$ for all $s\in\R$. Hence it suffices to 
show that the maps $T\mapsto\tau_sT$ and $T\mapsto T\tau_s$ are $w^*$-continuous
on $B(H^1(\R))$, for all $s\in\R$.

Fix some $s\in\R$ and note that the mapping $\frac{C_0(\R)}{Z_0}\to \frac{C_0(\R)}{Z_0}$
taking $\dot{f}$ to  $\dot{\overbrace{\tau_s f}}$ for any $f$ in $C_0(\R)$ is a well-defined isometric isomorphism.
This implies the existence of an isometric isomorphism 
$$
w_s\colon \frac{C_0(\R)}{Z_0}\widehat{\otimes} H^1(\R)\longrightarrow 
\frac{C_0(\R)}{Z_0}\widehat{\otimes} H^1(\R)
$$
such that $w_s(\dot{f}\otimes h)= \dot{\overbrace{\tau_s f}}\otimes h$ for 
all $f\in C_0(\R)$ and all $h\in H^1(\R)$.
It readily follows from
(\ref{BH1-dual+}) that for any $T\in B(H^1(\R))$, and any $f,h$ as above, we have
$$
\langle \tau_s T, \dot{f}\otimes h\rangle = \langle T, \dot{\overbrace{\tau_s f}}\otimes h\rangle.
$$
Thus $w_s^*\colon   B(H^1(\R))\to  B(H^1(\R))$ coincides with 
$T\mapsto\tau_sT$. The latter is therefore $w^*$-continuous.
The proof 
that $T\mapsto T\tau_s$ is $w^*$-continuous is similar.
\end{proof}

We introduce 
$$
\PM = \overline{\rm Span}^{w^*}\bigl\{\tau_s\, :\, s\in\R\bigr\}\,\subset B(H^1(\R)).
$$
A direct consequence of Lemma \ref{MH-closed} is that
\begin{equation}\label{MH-inclusion}
\PM\subset \MH.
\end{equation}

\begin{lem}\label{PM}
Recall the mapping $\overset{\circ}{\delta}$ from (\ref{delta}). Then 
$\bigl[{\rm Ker}(\overset{\circ}{\delta})\bigr]^\perp =\PM$.
\end{lem}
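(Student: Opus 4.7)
The plan is to prove the equality by computing the duality pairing $\langle \tau_s, \dot{f}\otimes h\rangle$ explicitly, and then invoking the bipolar theorem. Throughout, I write $X = \frac{C_0(\R)}{Z_0}\widehat{\otimes} H^1(\R)$, so that $X^* \simeq B(H^1(\R))$ via (\ref{BH1-dual+}), and I use $^{\perp}N$ for the preannihilator in $X$ of a subset $N\subset X^*$.

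The first step is the key computation. Fix $s\in\R$, $f\in C_0(\R)$ and $h\in H^1(\R)$. Using (\ref{BH1-dual+}) with $T=\tau_s$, followed by the substitution $u=t-s$, I get
\[
\langle \tau_s,\dot{f}\otimes h\rangle = \int_{-\infty}^{\infty}h(t-s)f(-t)\,dt = \int_{-\infty}^{\infty}h(u)f(-s-u)\,du,
\]
which, comparing with formula (\ref{Form1}) and the definition $(f\star h)(r)=\int f(r-u)h(u)\,du$, equals $(f\star h)(-s)=\overset{\circ}{\delta}(\dot{f}\otimes h)(-s)$. By linearity and continuity (using that $\overset{\circ}{\delta}\colon X\to C_0(\R)$ is bounded and convergence in $C_0(\R)$ is uniform, hence pointwise), this identity extends to all $\Phi\in X$:
\[
\langle \tau_s,\Phi\rangle\,=\,\overset{\circ}{\delta}(\Phi)(-s),\qquad s\in\R,\ \Phi\in X.
\]

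From this identity I read off the second step: $\Phi\in\mathrm{Ker}(\overset{\circ}{\delta})$ if and only if $\overset{\circ}{\delta}(\Phi)$ vanishes identically, if and only if $\langle \tau_s,\Phi\rangle=0$ for every $s\in\R$. In other words,
\[
\mathrm{Ker}(\overset{\circ}{\delta})\,=\, {}^{\perp}\bigl\{\tau_s\,:\,s\in\R\bigr\}.
\]
Taking preannihilator commutes with $w^*$-closed linear span, so the right-hand side also equals $^{\perp}\mathcal{PM}$.

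The final step is a routine duality application. Since $\mathcal{PM}$ is, by definition, a $w^*$-closed subspace of $X^*$, the bipolar theorem gives $\mathcal{PM}=({}^{\perp}\mathcal{PM})^{\perp}$. Substituting what was just proved, this reads $\mathcal{PM}=[\mathrm{Ker}(\overset{\circ}{\delta})]^{\perp}$, which is the desired equality.

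The only point that requires some care — and which I view as the main (mild) obstacle — is the passage from finite sums to arbitrary $\Phi\in X$ in the pairing formula, since $\overset{\circ}{\delta}$ is valued in $C_0(\R)$ and a priori $\Phi$ is only a norm limit in the projective tensor product. This is handled by the fact that both sides of the formula are continuous linear functionals of $\Phi$ (the left side because $\tau_s\in X^*$, the right side as the composition of $\overset{\circ}{\delta}$ with evaluation at $-s$), so agreement on the dense subspace of finite tensors forces agreement everywhere.
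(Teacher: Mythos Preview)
Your proof is correct and follows essentially the same approach as the paper: both compute the pairing $\langle\tau_s,\Phi\rangle$ and identify it with a point evaluation of $\overset{\circ}{\delta}(\Phi)$, thereby showing that $\mathrm{Ker}(\overset{\circ}{\delta})$ is the preannihilator of $\{\tau_s:s\in\R\}$, and then conclude by bipolarity. The only cosmetic difference is that the paper works directly with a series representation $\Phi=\sum_k\dot{f_k}\otimes h_k$ and invokes (\ref{Form1}) to write $[\overset{\circ}{\delta}(\Phi)](s)=\langle\tau_{-s},\Phi\rangle$ in one line, whereas you verify the identity on elementary tensors first and then extend by continuity; the substance is identical.
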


\begin{proof} Let $s\in \R$ and
let $\Phi\in \frac{C_0(\R)}{Z_0}\widehat{\otimes} H^1(\R)$. According
to \cite[Proposition 2.8]{Ryan}, we may choose
two sequences $(f_k)_{k\in \N}$ in
$C_0(\R)$ and $(h_k)_{k\in \N}$ in $H^1(\R)$ such that
$$
\sum_{k=1}^\infty \norme{f_k}_\infty\norme{h_k}_1<\infty
\qquad\hbox{and}\qquad \Phi=\sum_{k=1}^\infty \dot{f_k}\otimes h_k.
$$
Then by (\ref{Form1}),
$$
\bigl[\overset{\circ}{\delta}(\Phi)\bigr](s) = \sum_{k=1}^\infty (f_k\star h_k)(s) 
= \sum_{k=1}^\infty \langle \tau_{-s} h_k, f_k\rangle
= \langle\tau_{-s},\Phi\rangle.
$$
This shows that
$$
{\rm Span}\bigl\{\tau_s\, :\, s\in\R\bigr\}_\perp = 
{\rm Ker}(\overset{\circ}{\delta}). 
$$
The result follows at once.
\end{proof}

By standard duality and (\ref{quotient2}), the dual space $\A_0(\R)^*$ may be identified
with $\bigl[{\rm Ker}(\overset{\circ}{\delta})\bigr]^\perp$. Applying Lemma \ref{PM} and
(\ref{BH1-dual+}), we
therefore obtain the following.

\begin{thm}\label{DualA0}
\ 

\begin{itemize} 
\item [(1)] For any $T\in\PM$, there exists a unique $\eta_T\in\A_0(\R)^*$ such that
$$
\langle\eta_T, f\star h\rangle = \int_{-\infty}^\infty (Th)(t) f(-t)\, dt
$$
for any $f\in C_0(\R)$ and any $h\in H^1(\R)$.
\item [(2)] The mapping $T\mapsto \eta_T$ induces a $w^*$-homeomorphic and
isometric identification
$$
\A_0(\R)^* \simeq\PM.
$$
\end{itemize}
\end{thm}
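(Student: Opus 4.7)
The plan is to assemble the identifications already established, namely (\ref{quotient2}), (\ref{BH1-dual})--(\ref{BH1-dual+}) and Lemma \ref{PM}, and then read off both the formula for $\eta_T$ and the $w^*$-homeomorphism property. Set $X := \frac{C_0(\R)}{Z_0}\widehat{\otimes} H^1(\R)$ and $N := {\rm Ker}(\overset{\circ}{\delta})$ for convenience. By (\ref{quotient2}), the map $\overset{\circ}{\delta}$ induces an isometric isomorphism $\A_0(\R)\simeq X/N$. By standard duality for quotients of Banach spaces, this yields an isometric identification $\A_0(\R)^*\simeq N^\perp$, where $N^\perp\subset X^*$.

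Next, I would invoke (\ref{BH1-dual}), which identifies $X^*$ isometrically with $B(H^1(\R))$, the duality being explicitly given by (\ref{BH1-dual+}). Under this identification, $N^\perp$ is viewed as a $w^*$-closed subspace of $B(H^1(\R))$, and by Lemma \ref{PM} it coincides with $\PM$. Combining with the previous paragraph, one obtains the isometric identification
\[
\A_0(\R)^*\,\simeq\, N^\perp\,\simeq\,\PM
\]
of the statement. To derive the formula in (1), take $T\in\PM\subset B(H^1(\R))$ and let $\eta_T\in\A_0(\R)^*$ be its image. For any $f\in C_0(\R)$ and $h\in H^1(\R)$, the element $f\star h=\overset{\circ}{\delta}(\dot{f}\otimes h)$ of $\A_0(\R)$ pairs with $\eta_T$ exactly as $\dot{f}\otimes h$ pairs with $T$ in $X^*$, and this latter pairing is given by (\ref{BH1-dual+}); thus $\langle\eta_T,f\star h\rangle=\int_{-\infty}^\infty(Th)(t)f(-t)\,dt$. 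Uniqueness of $\eta_T$ follows from the fact that elements of the form $f\star h$ span a dense subspace of $\A_0(\R)$ by construction.

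Finally, for the $w^*$-homeomorphism assertion, I would use the general fact that for a closed subspace $N\subset X$, the canonical isometric isomorphism $(X/N)^*\simeq N^\perp$ is a $w^*$-homeomorphism, where $N^\perp$ carries the topology induced by the $w^*$-topology of $X^*$. Applied here, this means that the $w^*$-topology on $\A_0(\R)^*$ (with predual $\A_0(\R)\simeq X/N$) corresponds, under $T\mapsto\eta_T$, to the restriction to $\PM$ of the $w^*$-topology on $B(H^1(\R))$ (with predual $X$). That is exactly the desired compatibility.

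The argument is almost entirely a bookkeeping exercise in composing canonical isomorphisms; the only point requiring care is keeping track of which predual is used at each stage so that the $w^*$-topologies genuinely match, but this is guaranteed by the functoriality of the duality between quotients and annihilators.
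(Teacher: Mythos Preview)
Your proposal is correct and follows essentially the same route as the paper: identify $\A_0(\R)^*$ with the annihilator of ${\rm Ker}(\overset{\circ}{\delta})$ via (\ref{quotient2}) and standard quotient duality, then apply Lemma \ref{PM} and the explicit pairing (\ref{BH1-dual+}). If anything, you have been more careful than the paper in spelling out why the $w^*$-topologies match.
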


\begin{rq1}  
Recall ${\mathcal R}$ from (\ref{Trivial}). It turns out that
$$
\PM =\overline{{\mathcal R}}^{w^*}.
$$
Indeed for any $s\in \R$, $\tau_s\colon H^1(\R)\to H^1(\R)$ is equal 
to the convolution by the Dirac mass at $s$, which yields $\subset$.
To show the converse inclusion, we observe that for 
any $\mu\in M(\R)$ and any
$\Phi\in \frac{C_0(\R)}{Z_0}\widehat{\otimes} H^1(\R)$, we have
\begin{equation}\label{Connection}
\langle \mu, \overset{\circ}{\delta}(\Phi)\rangle 
=\langle R_\mu,\Phi\rangle.
\end{equation}
Here we use 
the identification $M(\R)\simeq C_0(\R)^*$ given 
by (\ref{Riesz}) on the left-hand side and
we use (\ref{BH1-dual}) on the right-hand side.
To prove this identity, let $f\in 
C_0(\R)$ and $h\in H^1(\R)$. Then
\begin{align*}
\langle \mu, \overset{\circ}{\delta}(\dot{f}\otimes h)\rangle \,
& = \langle \mu, f\star h\rangle \\
& = \int_{\tiny\R}\int_{-\infty}^\infty f(-t)h(t-s)\, dt\,
d\mu(s)\\
& = \langle \mu\star h, f \rangle.
\end{align*}
This shows (\ref{Connection}) when $\Phi=\dot{f}\otimes h$.
By linearity, this implies (\ref{Connection}) for 
$\Phi\in \frac{C_0(\R)}{Z_0} \otimes H^1(\R)$. Then by density,
we deduce (\ref{Connection}) for all $\Phi$.

It clearly follows from (\ref{Connection}) that 
${\mathcal R}\subset\bigl[{\rm Ker}(\overset{\circ}{\delta})\bigr]^\perp$.
By Lemma \ref{PM}, this yields $\supset$.
\end{rq1}

We now give a few consequences of the above duality results.

\begin{prop}\label{lemcont}
For any 
$b$ in  $L^1(\R_+)$, the function $\widehat{b}(-\,\cdotp)\colon\R\to\C\,$
belongs to $\A_0(\R)$ and
$$
\bignorm{\widehat{b}(-\,\cdotp)}_{\A_0}\leq\norme{b}_1.
$$
Moreover the mapping $L^1(\R_+)\to \A_0(\R)$ taking 
$b$ to $\widehat{b}(-\,\cdotp)$ is a Banach algebra 
homomorphism with dense range.
\end{prop}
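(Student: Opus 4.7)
The plan is to construct the map $b \mapsto \widehat{b}(-\,\cdotp)$ as the pre-adjoint of the symbol map furnished by Theorem \ref{thmmutlH1}. For each $T \in \PM$, let $m_T \colon \R_+^* \to \C$ denote the bounded continuous symbol provided by Theorem \ref{thmmutlH1} (which applies since $\PM \subset \MH$ by (\ref{MH-inclusion})), so that $\normeinf{m_T} \leq \norme{T}$. For $b \in L^1(\R_+)$ I define
\[
\Phi_b \colon \PM \longrightarrow \C, \qquad \Phi_b(T) := \int_0^\infty b(t)\, m_T(t)\,dt,
\]
which is a bounded linear functional with $|\Phi_b(T)| \leq \norme{b}_1\norme{T}$, hence $\Phi_b \in \PM^* = \A_0(\R)^{**}$ and $\norme{\Phi_b}_{\PM^*} \leq \norme{b}_1$. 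The heart of the argument is to show that $\Phi_b$ lies in the canonical image of $\A_0(\R)$ in $\A_0(\R)^{**}$; once this is done, the isometric nature of that embedding yields $\norme{\Phi_b}_{\A_0} \leq \norme{b}_1$ automatically, without any need to track constants in an explicit decomposition.

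To exhibit a representative of $\Phi_b$ in $\A_0(\R)$, I will decompose $b$ dyadically: write $b = \sum_{k \in \Z} b_k$ with $b_k := b\cdot \mathbf{1}_{[2^k,2^{k+1})}$, so $\sum_k \norme{b_k}_1 = \norme{b}_1$. Fix once and for all some $g_0 \in H^1(\R) \cap \SR$ whose Fourier transform is bounded below by a constant $c > 0$ on $[1,2]$, and set $g_k(x) := 2^k g_0(2^k x)$, so that $g_k \in H^1(\R)$ with $\widehat{g_k}(t) = \widehat{g_0}(t/2^k)$, $\norme{g_k}_1 = \norme{g_0}_1$, and $\widehat{g_k}(t) \geq c$ on $[2^k,2^{k+1})$. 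Set
\[
\phi_k(x) := \int_0^\infty \frac{b_k(t)}{\widehat{g_k}(t)}\,e^{ixt}\,dt \; \in \; C_0(\R),
\]
with $\normeinf{\phi_k} \leq \norme{b_k}_1/c$, so that $\phi_k \star g_k \in \A_0(\R)$ and the series $F_b := \sum_k \phi_k \star g_k$ converges absolutely in $\A_0(\R)$ since $\sum_k \norme{\phi_k \star g_k}_{\A_0} \leq (\norme{g_0}_1/c)\norme{b}_1$. A Fubini computation together with the defining relation $\widehat{Tg_k}(t) = m_T(t)\widehat{g_k}(t)$ of the symbol then gives, for each $T \in \PM$,
\[
\langle \eta_T, \phi_k \star g_k \rangle = \int_\R (Tg_k)(x)\phi_k(-x)\,dx = \int_0^\infty b_k(t)\,m_T(t)\,dt,
\]
and summing in $k$ yields $\langle \eta_T, F_b \rangle = \Phi_b(T)$. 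Hence $\Phi_b$ is represented by $F_b \in \A_0(\R)$, and $\norme{F_b}_{\A_0} \leq \norme{b}_1$ by the preceding paragraph.

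To identify $F_b$ with $\widehat{b}(-\,\cdotp)$, I test against shifts: $m_{\tau_s}(t) = e^{-ist}$, and by (\ref{Form1}) the functional $\eta_{\tau_s}$ acts on $\A_0(\R)$ as evaluation at $-s$, so
\[
F_b(-s) = \langle \eta_{\tau_s}, F_b \rangle = \Phi_b(\tau_s) = \int_0^\infty b(t)e^{-ist}\,dt = \widehat{b}(s),
\]
i.e.\ $F_b(s) = \widehat{b}(-s)$, as required. Multiplicativity of $b \mapsto \widehat{b}(-\,\cdotp)$ is then immediate from $\widehat{b_1 \ast b_2}(-\,\cdotp) = \widehat{b_1}(-\,\cdotp)\widehat{b_2}(-\,\cdotp)$ combined with Proposition \ref{propAlg}. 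For density of the range, given $f \star h \in \A_0(\R)$ with $f \in C_0(\R)$ and $h \in H^1(\R)$, I would approximate $f$ in sup-norm by compactly supported continuous functions $f_n$ and $h$ in $L^1$-norm by Schwartz functions $h_n \in H^1(\R) \cap \SR$; then $b_n := (2\pi)^{-1}\widehat{f_n}\widehat{h_n}$ belongs to $L^1(\R_+)$, Fourier inversion gives $f_n \star h_n = \widehat{b_n}(-\,\cdotp)$ in the range, and the bilinear estimate $\norme{f \star h - f_n \star h_n}_{\A_0} \leq \normeinf{f - f_n}\norme{h_n}_1 + \normeinf{f}\norme{h - h_n}_1$ ensures convergence.

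The main obstacle will be the Fubini-type identification $\langle \eta_T, \phi_k \star g_k \rangle = \int_0^\infty b_k(t) m_T(t)\,dt$: this requires exchanging the oscillatory $t$-integral defining $\phi_k$ with the integral representation of $\widehat{Tg_k}$, and the integrability of $b_k/\widehat{g_k}$ over its localized frequency window is the essential ingredient making this swap legitimate.
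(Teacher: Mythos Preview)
Your proof is correct and takes a genuinely different route from the paper's. Both arguments rest on the same two ingredients---the identification $\A_0(\R)^*\simeq\PM$ from Theorem \ref{DualA0} and the symbol bound $\normeinf{m_T}\leq\norme{T}$ from Theorem \ref{thmmutlH1}---but they are assembled in opposite directions. The paper first restricts to the dense class $\mathcal{C}_{0,1}=\{\widehat{F}:F\in C_{00}\star H^1\}\subset L^1(\R_+)$, where membership of $\widehat{b}(-\,\cdotp)$ in $\A_0(\R)$ is already known, chooses a norming functional $\eta\in\A_0(\R)^*$ by Hahn--Banach, and computes $\langle\eta,\widehat{b}(-\,\cdotp)\rangle=\int_0^\infty m(t)b(t)\,dt$ via Lemma \ref{Tool}; the general case then follows by density in $L^1(\R_+)$. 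You instead work on all of $L^1(\R_+)$ at once: you define $\Phi_b\in(\A_0^*)^*$ directly from the symbol map, produce a concrete preimage $F_b\in\A_0(\R)$ via a dyadic Littlewood--Paley construction (with an a priori loss of constant), and then recover the sharp bound $\norme{b}_1$ from the isometry of the canonical embedding $\A_0\hookrightarrow\A_0^{**}$. Your approach is more constructive and avoids the two-step density argument; the paper's is shorter since it never needs to build $F_b$ by hand. One minor point: in your density paragraph, the reason $b_n=(2\pi)^{-1}\widehat{f_n}\widehat{h_n}$ lands in $L^1(\R_+)$ is that $\widehat{h_n}\in\SR$ carries the integrability while $\widehat{f_n}\in C_0(\R)$ is merely bounded---you might say this explicitly, since ``$f_n$ compactly supported continuous'' alone does not give $\widehat{f_n}\in L^1$.
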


\begin{proof}
We will use the space $C_{00}(\R)$ defined by
(\ref{C00}). Let $C_{00}\star H^1\subset \A_0(\R)$
be the linear span of the functions $f\star h$, for $f\in C_{00}(\R)$ and
$h\in H^1(\R)$. By definition of $C_{00}(\R)$,
the Fourier transform maps $C_{00}\star H^1$ into $L^1(\R_+)$. Then we 
consider
$$
{\mathcal C}_{0,1}=\{\widehat{F}\, :\, F\in C_{00}
\star H^1\}\subset L^1(\R_+).
$$
Let $b\in C^\infty(\R_+^*)$ with compact support. Let  
$c\in C^\infty(\R_+^*)$ with compact 
support
such that $c\equiv 1$ on the support of $b$,
so that $b=bc$.
Then ${\mathcal F}^{-1}(b)\in C_{00}(\R)$, ${\mathcal F}^{-1}(c)\in H^1(\R)$ and the Fourier transform 
of ${\mathcal F}^{-1}(b)\star {\mathcal F}^{-1}(c)$ is equal to $b$. Thus $b\in {\mathcal C}_{0,1}$. 
Consequently, ${\mathcal C}_{0,1}$ is dense in $L^1(\R_+)$.

Let $b\in {\mathcal C}_{0,1}$ and let $F=\widehat{b}(-\,\cdotp)$, so that
$$
\widehat{F}\,=(2\pi)b.
$$
Take finite families 
$(f_k)_k$ in $C_{00}(\R)$ and $(h_k)_k$ in $H^1(\R)$ such that $F=\sum_k 
f_k\star h_k$. Pick $\eta\in\A_0(\R)^*$ such that 
$\norme{\eta}=1$ and $\norme{F}_{\A_0}=
\langle\eta,F\rangle$. By Theorem \ref{DualA0}, there exists
$T\in\MH$ such that $\norme{T}_{B(H^1)}=1$ and for any $k$,
$$
\langle\eta, f_k\star h_k\rangle = \int_{-\infty}^\infty
(Th_k)(u) f_k(-u)\, du.
$$
By Theorem \ref{thmmutlH1}, the symbol $m$ of the multiplier
$T$ satisfies $\norme{m}_\infty\leq 1$.
Moreover by Lemma \ref{Tool}, we have
\begin{align*}
\int_{-\infty}^\infty (Th_k)(u) f_k(-u)\, du\,
&= \,\frac{1}{2\pi}\int_{0}^\infty \widehat{Th_k}(t)
\widehat{f_k}(t)\, dt\\
&= \,\frac{1}{2\pi}\int_{0}^\infty m(t)
\widehat{h_k}(t)\widehat{f_k}(t)\, dt,
\end{align*}
for all $k$. Summing over $k$, we deduce
that
$$
\langle\eta,F\rangle = \,\frac{1}{2\pi}\,
\sum_k  \int_{0}^\infty m(t)
\widehat{h_k}(t)\widehat{f_k}(t)\, dt\,
= \,\frac{1}{2\pi}\,\int_{0}^\infty m(t)
\widehat{F}(t)\, dt,
$$
and hence
$$
\langle\eta,F\rangle = \int_{0}^\infty m(t)b(t)\,dt.
$$
We deduce that
$$
\norm{F}_{\A_0}\leq\norm{b}_1.
$$
Since ${\mathcal C}_{0,1}$ is dense in $L^1(\R_+)$
and $\A_0(\R)$ is complete, this estimate
implies that $\widehat{b}(-\,\cdotp)\colon\R\to\C\,$
belongs to $\A_0(\R)$ for any $b\in L^1(\R_+)$, with
$\bignorm{\widehat{b}(-\,\cdotp)}_{\A_0}\leq\norme{b}_1$.

It is plain that  $b\mapsto \widehat{b}(-\,\cdotp)$ is a Banach algebra 
homomorphism. Its range 
contains $C_{00}\star H^1$ and the latter is dense in 
$\A_0(\R)$, because $C_{00}(\R)$ is dense in $C_0(\R)$. 
\end{proof}

\begin{rq1}\label{DualA0+}
Let $\eta\in\A_0(\R)^*$, let 
$T\in \MH$ be the multiplier associated with $\eta$, according to Theorem 
\ref{DualA0}, and let $m\in C_b(\R_+^*)$ be the symbol of $T$. Then it
follows from the previous result and its proof that for all $b\in L^1(\R_+)$,
$$
\langle\eta,\widehat{b}(-\,\cdotp)\rangle = 
\int_{0}^\infty m(t)b(t)\,dt.
$$
\end{rq1}

\begin{rq1}\label{Rational}
For any $\lambda\in P_-$, we let 
$$
b_\lambda(t)= ie^{-i\lambda t},\qquad t>0.
$$
Then $b_\lambda\in L^1(\R_+)$ and we have
$$
\widehat{b_\lambda}(-u) = \,\frac{1}{\lambda - u},\qquad
u\in\R.
$$
Applying Proposition \ref{lemcont}, we obtain 
that $(\lambda - \,\cdotp)^{-1}$ belongs to $\A_0(\R)$
for any $\lambda\in P_-$. Since $\A_0(\R)$ is a Banach
algebra, this implies that any rational function
$F\colon \R\to\C\,$ with degree $deg(F)\leq -1$ and poles in $P_-$ 
belongs to $\A_0(\R)$.
\end{rq1}

We can now strengthen Remark \ref{rqA0} as follows.

\begin{prop}\label{normequA0}
For any $F \in \A_0(\R)$, we have
\[
\norme{F}_{\A_0}=\norme{F}_{\A}.
\]	
\end{prop}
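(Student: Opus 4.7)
The plan is to combine the duality in Theorem \ref{DualA0} with the inequality $\norme{F}_\A\leq\norme{F}_{\A_0}$ already noted in Remark \ref{rqA0}. Since the mapping $T\mapsto\eta_T$ yields an isometric identification $\A_0(\R)^*\simeq\PM$, one has
$$
\norme{F}_{\A_0}\,=\,\sup\bigl\{|\langle\eta_T,F\rangle|\,:\, T\in\PM,\ \norme{T}_{B(H^1)}\leq 1\bigr\},
$$
so it is enough to show that for any $T\in\PM$ and any $F\in\A_0(\R)$,
$$
|\langle\eta_T,F\rangle|\,\leq\,\norme{T}_{B(H^1)}\norme{F}_\A.
$$

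The first step is to compute $\eta_{\tau_s}$ explicitly. For $f\in C_0(\R)$, $h\in H^1(\R)$, the change of variable $u=t-s$ together with Theorem \ref{DualA0}(1) gives
$$
\langle\eta_{\tau_s},f\star h\rangle \,=\,\int_{-\infty}^\infty h(t-s)f(-t)\,dt\,=\,\int_{-\infty}^\infty f(-u-s)h(u)\,du\,=\,(f\star h)(-s).
$$
By continuity of $\eta_{\tau_s}$, $\langle\eta_{\tau_s},F\rangle=F(-s)$ for every $F\in\A_0(\R)$. Hence for any finite combination $T=\sum_{i=1}^N c_i\tau_{s_i}$, we have $\langle\eta_T,F\rangle=\sum_i c_i F(-s_i)$.

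The second step bounds this when $T$ is a finite combination of translations. Fix any decomposition $F=\sum_k f_k\star h_k$ with $f_k\in BUC(\R)$, $h_k\in H^1(\R)$ and $\sum_k\norme{f_k}_\infty\norme{h_k}_1<\infty$. Uniform convergence of the series yields $F(-s_i)=\sum_k(f_k\star h_k)(-s_i)$, and since $\sum_{k,i}|c_i|\norme{f_k}_\infty\norme{h_k}_1<\infty$ the order of summation may be swapped by Fubini. Reversing the computation of Step 1 gives
$$
\sum_{i=1}^N c_i(f_k\star h_k)(-s_i)\,=\,\int_{-\infty}^\infty (Th_k)(t)f_k(-t)\,dt,
$$
so that
$$
|\langle\eta_T,F\rangle|\,\leq\,\sum_k \norme{Th_k}_1\norme{f_k}_\infty\,\leq\,\norme{T}_{B(H^1)}\sum_k\norme{h_k}_1\norme{f_k}_\infty.
$$
Taking the infimum over $\A$-decompositions of $F$ gives the desired bound for finite combinations of translations.

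The third step passes to an arbitrary $T\in\PM$ by $w^*$-approximation. By the bipolar theorem, the unit ball of $\PM$ coincides with the $w^*$-closure of the unit ball of $\mathrm{Span}\{\tau_s:s\in\R\}$, so any $T\in\PM$ with $\norme{T}\leq 1$ is the $w^*$-limit of a net $(T_\alpha)$ of finite combinations with $\norme{T_\alpha}_{B(H^1)}\leq 1$. Since $F\in\A_0(\R)$ sits in the predual of $\PM$ via Theorem \ref{DualA0}(2), $\langle\eta_{T_\alpha},F\rangle\to\langle\eta_T,F\rangle$, and passing to the limit in Step 2 yields $|\langle\eta_T,F\rangle|\leq\norme{T}_{B(H^1)}\norme{F}_\A$. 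Combined with Remark \ref{rqA0}, this gives $\norme{F}_{\A_0}=\norme{F}_\A$. The main subtlety is the unit-ball version of the $w^*$-density of $\mathrm{Span}\{\tau_s\}$ in $\PM$ (a standard consequence of the bipolar theorem) and the absolute summability needed to justify the Fubini swap in Step 2; both are routine.
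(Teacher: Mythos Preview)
Your Step 3 has a real gap. The bipolar theorem does \emph{not} yield that the unit ball of $\mathrm{Span}\{\tau_s:s\in\R\}$ is $w^*$-dense in the unit ball of $\PM$. What bipolar gives is $\overline{B_S}^{w^*}=((B_S)_\circ)^\circ$, and for this to equal the unit ball of $\PM$ you would need $(B_S)_\circ$ to be exactly the unit ball of $\A_0(\R)$, i.e.\ that $\mathrm{Span}\{\tau_s\}$ is $1$-norming for $\A_0(\R)$. In general, $w^*$-density of a subspace $V\subset X^*$ only means $V$ separates points of $X$; it does not imply $V$ is $1$-norming. You have not established the $1$-norming property here, and it is not obviously easier than the proposition itself.

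Nor can you bypass this by taking an unrestricted $w^*$-convergent net $T_\alpha\to T$ of finite combinations and passing to the limit in the formula from Step~2: the identity $\langle\eta_{T_\alpha},F\rangle=\sum_k\int (T_\alpha h_k)(t)f_k(-t)\,dt$ uses $f_k\in BUC(\R)$, and the functional $T\mapsto\int (Th)(t)f(-t)\,dt$ is $w^*$-continuous on $B(H^1(\R))$ only when $f\in C_0(\R)$ (this is how the predual $\frac{C_0(\R)}{Z_0}\widehat\otimes H^1(\R)$ is built). So the right-hand side need not converge, and without the norm bound on $T_\alpha$ the left-hand side inequality $|\langle\eta_{T_\alpha},F\rangle|\leq\norme{T_\alpha}\norme{F}_\A$ is useless.

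The paper avoids duality altogether: it uses the contractive approximate unit $G_N=\widehat{Ne^{-N\,\cdotp}}(-\,\cdotp)$ in $\A_0(\R)$ (coming from Proposition~\ref{lemcont}) together with the ideal inequality $\norme{FG_N}_{\A_0}\leq\norme{F}_\A\norme{G_N}_{\A_0}\leq\norme{F}_\A$ from Proposition~\ref{propAlg}, and then lets $N\to\infty$. This sidesteps precisely the issue you ran into, since it never requires pairing a general $T\in\PM$ against a $BUC$-representation of $F$.
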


\begin{proof}
For any $N\in\N$, let $G_N\colon\R\to\C$ be defined by
$$
G_N(u)=\,\frac{N}{N-iu}\,,\qquad u\in\R.
$$
Then $G_N = \widehat{Ne^{-N\,\cdotp}}(-\,\cdotp)$. We note that
the sequence $(Ne^{-N\,\cdotp})_{N\in\N}$ is
a contractive approximate unit of $L^1(\R_+)$. It therefore follows from
Proposition \ref{lemcont} that 
$(G_N)_{N\in\N}$ is a contractive approximate unit of $\A_0(\R)$.

Let $F\in\A_0(\R)$. By Proposition \ref{propAlg}, we have
\begin{equation}\label{inenormA0A}
\norme{FG_N}_{\A_0} \leq \norme{F}_{\A}\norme{G_N}_{\A_0} \leq 
\norme{F}_{\A}.
\end{equation}
Moreover $FG_N\to F$ in $\A_0(\R)$ hence $\norme{FG_N}_{\A_0}\to
\norme{F}_{\A_0}$ when $N\to\infty$. We deduce that
\[
\norme{F}_{\A_0} \leq  \norme{F}_{\A}.
\]
Combining with \eqref{comparnormA}, we obtain
$\norme{F}_{\A_0} =  \norme{F}_{\A}$. 
\end{proof}

\begin{rq1}\label{densities}
According to 
\cite[Chapter II, Theorem 3.8]{gar}, $H^1_{-}(\R)\subset M(\R)$ is the annihilator 
of $\bigl\{(\lambda-\,\cdotp)^{-1}\, :\, \lambda\in P_{-}\bigr\}\subset
C_0(\R)$.
Hence we deduce from
Remark \ref{Rational} that $H^1_{-}(\R)$ contains $\A_0(\R)^\perp$. 
By (\ref{Ortho-H1}), we have $(C_0(\R)\cap H^\infty(\R))^\perp=H^1_{-}(\R)$.
Hence $\A_0(\R)^\perp=(C_0(\R)\cap H^\infty(\R))^\perp$.
This implies that $\A_0(\R)$
is dense in $C_0(\R)\cap H^\infty(\R)$.

Using Proposition \ref{lemcont}, or repeating the above argument, we also obtain 
that the space $\{\widehat{b}(-\,\cdotp)\, :\, b\in L^1(\R_+)\}$
is dense in $C_0(\R)\cap H^\infty(\R)$.
\end{rq1}

\subsection{Half-plane versions}\label{HP-Versions} 
For the purpose of studying
functional calculus in the next three sections,
we now introduce half-plane versions $\A_0(\C_+)$ and
$\A(\C_+)$ of $\A_0(\R)$ and $\A(\R)$, respectively.

Let $F\in H^\infty(\R)$. We
may consider its Poisson integral ${\mathcal P}[F]\colon P_+\to\C$ and the latter is a bounded
holomorphic function (see e.g. \cite[Sect. I.3]{gar}). Then we define 
$$
\widetilde{F}\colon \C_+\longrightarrow\C
$$
by setting
$$
\widetilde{F}(z) = {\mathcal P}[F](iz),\qquad z\in\C_+.
$$
Note that the mapping $F\mapsto \widetilde{F}$ is an isometric algebra
isomorphism of $H^\infty(\R)$ onto $H^\infty(\C_+)$.

We set
$$
\A_0(\C_+) = \bigl\{\widetilde{F}\, :\, F\in\A_0(\R)\bigr\}
\qquad\hbox{and}\qquad 
\A(\C_+) = \bigl\{\widetilde{F}\, :\, F\in\A(\R)\bigr\}.
$$
We equip these spaces with the norms induced by $\A_0(\R)$ and $\A(\R)$, respectively.
That is, we set $\norm{\widetilde{F}}_{\A_0}= \norm{F}_{\A_0}$ (resp.
$\norm{\widetilde{F}}_{\A}= \norm{F}_{\A}$) for any $F\in\A_0(\R)$ (resp.
$F\in\A(\R)$). By construction, we have 
\begin{equation}\label{Inclusions}
\A_0(\C_+)\subset \A(\C_+)
\qquad\hbox{and}\qquad 
\A(\C_+)\subset H^\infty(\C_+).
\end{equation}
It clearly follows from Proposition \ref{propAlg} and from the 
multiplicativity of the mapping $F\mapsto \widetilde{F}$
that
$\A(\C_+)$ is a Banach algebra for pointwise multiplication
and that $\A_0(\C_+)$ is an ideal of $\A(\C_+)$.
Further
the second inclusion in (\ref{Inclusions}) is 
contractive  and
it follows 
from Proposition \ref{normequA0}
that the first inclusion in (\ref{Inclusions}) is 
an isometry.

Let $b\in L^1(\R_+)$ and consider  $F=\widehat{b}(-\,\cdotp)\colon\R\to\C$.
Then $\widetilde{F}$ coincides with the Laplace transform $L_b\colon\C_+\to\C$
of $b$ defined by
\begin{equation}\label{Lb}
L_b(z)=\,\int_{0}^{\infty} e^{-tz} b(t)\, dt,\qquad z\in\C_+.
\end{equation}
As a consequence of Proposition \ref{lemcont}, we therefore obtain the following.

\begin{lem}\label{Laplace}
For any $b\in L^1(\R_+)$,  $L_b$ belongs to 
$\A_0(\C_+)$ and $\norm{L_b}_{\A_0}\leq \norm{b}_1$. 
Moreover the mapping $b\mapsto L_b$ is a Banach algebra 
homomorphism from $L^1(\R_+)$ into $\A_0(\C_+)$, and
the space $\{L_b\,:\, b\in L^1(\R_+)\}$ is dense in $\A_0(\C_+)$.
\end{lem}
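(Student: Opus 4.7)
The plan is to deduce this lemma directly from Proposition \ref{lemcont} by transferring everything from the real line to the half-plane via the Poisson integral. The heart of the matter is to verify that when $b \in L^1(\R_+)$ and $F = \widehat{b}(-\,\cdotp) \in \A_0(\R)$, the associated half-plane function $\widetilde{F}$ is precisely the Laplace transform $L_b$.

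To see this, note that $F(u) = \int_0^\infty b(t) e^{itu}\,dt$, so $F$ extends to a bounded holomorphic function $\Phi\colon P_+\to\C$ by the formula $\Phi(w) = \int_0^\infty b(t) e^{itw}\,dt$ (the integrand is dominated by $|b(t)|$ since $\mathrm{Im}(w) > 0$). Since $\Phi$ is bounded and holomorphic on $P_+$ with boundary values $F$, the standard Poisson representation of $H^\infty(P_+)$ gives $\mathcal{P}[F] = \Phi$ on $P_+$. Evaluating at $w = iz$ for $z \in \C_+$ yields
\[
\widetilde{F}(z) = \mathcal{P}[F](iz) = \int_0^\infty b(t) e^{-tz}\,dt = L_b(z).
\]

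From this identification, everything falls out of Proposition \ref{lemcont} and the definitions. Indeed, $F \in \A_0(\R)$ with $\norme{F}_{\A_0}\leq \norme{b}_1$ gives $L_b = \widetilde{F} \in \A_0(\C_+)$ with $\norme{L_b}_{\A_0} = \norme{F}_{\A_0} \leq \norme{b}_1$ by definition of the norm on $\A_0(\C_+)$. The map $F \mapsto \widetilde{F}$ is an isometric Banach algebra isomorphism from $\A_0(\R)$ onto $\A_0(\C_+)$: isometric by construction, and multiplicative because if $F, G \in \A_0(\R) \subset H^\infty(\R)$, then $\widetilde{F}$ and $\widetilde{G}$ are the unique bounded holomorphic extensions of $F$ and $G$ to $P_+$ (pulled back to $\C_+$), so their product is a bounded holomorphic extension of $FG$ and must equal $\widetilde{FG}$ by uniqueness. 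Composing with the Banach algebra homomorphism $b \mapsto \widehat{b}(-\,\cdotp)$ from Proposition \ref{lemcont} shows that $b \mapsto L_b$ is a Banach algebra homomorphism. Finally, density of $\{L_b : b \in L^1(\R_+)\}$ in $\A_0(\C_+)$ is immediate from the density statement in Proposition \ref{lemcont} together with the fact that $F \mapsto \widetilde{F}$ is an isometric surjection.

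The only nontrivial step is the Poisson-integral identification $\widetilde{F} = L_b$, which reduces to the uniqueness of bounded holomorphic extensions from $\R$ to $P_+$ combined with the dominated integrability of $b(t)e^{itw}$ for $w \in P_+$; everything else is a formal transfer of statements already proved on $\A_0(\R)$.
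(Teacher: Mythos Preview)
Your proof is correct and follows exactly the paper's approach: the paper simply asserts that $\widetilde{F}=L_b$ for $F=\widehat{b}(-\,\cdotp)$ and then states the lemma ``as a consequence of Proposition \ref{lemcont}''. If anything, you provide more detail than the paper on the identification $\widetilde{F}=L_b$, which the paper leaves implicit.
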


\section{Functional calculus on $\A_0$ and $\A$}\label{CF}
The goal of this section is to construct a bounded functional calculus for the negative generator
of a bounded $C_0$-semigroup on Hilbert space, defined on 
$\A(\C_+)$.

\subsection{Half-plane holomorphic functional calculus}\label{HPFC}
We need some background
on the half-plane holomorphic functional calculus introduced by 
Batty-Haase-Mubeen in \cite{bat-haa}, to which we refer for details.

Let $X$ be an arbitrary Banach space.
Let $\omega\in\R$ and recall the definition of ${\mathcal H}_{\omega}$ from (\ref{HP0}).
Let $A$ be a closed and densely defined operator on
$X$ 
such that the spectrum of $A$ is included 
in the closed half-plane $\overline{{\mathcal H}_{\omega}}$ and
\begin{equation}\label{HP}
\forall\,\alpha<\omega,\qquad
\sup \bigl\{\norme{R(z,A)}\, :\, {\rm Re}(z) \leq \alpha \bigr\} < \infty.
\end{equation}

Consider the auxiliary algebra
\[
\mathcal{E}({\mathcal H}_{\alpha}) := \bigl\{\varphi 
\in H^{\infty}({\mathcal H}_{\alpha})
\,:\,\exists s>0,\, \vert \varphi(z)\vert  = O(|z|^{-(1+s)}) \text{ as } |z| \rightarrow \infty\bigr\},
\]
for any $\alpha<\omega$. 
For any $\varphi\in \mathcal{E}({\mathcal H}_{\alpha})$ and for any $\beta\in(\alpha,\omega)$, 
the assumption (\ref{HP}) insures that the integral
\begin{equation}\label{fofA}
\varphi(A)  := \frac{-1}{2\pi}\,\int_{-\infty}^{\infty}\varphi(\beta+is)R(\beta+is,A)\,ds
\end{equation}
is absolutely convergent in $B(X)$.
Further its value is independent of the choice of $\beta$ (this is due
to Cauchy's Theorem for vector-valued holomorphic functions) and
the mapping $\varphi\mapsto \varphi(A)$ in an algebra homomorphism
from $\mathcal{E}({\mathcal H}_{\alpha})$ into $B(X)$. This definition is compatible with the
usual rational functional calculus; indeed for any $\mu\in\C$ with
${\rm Re}(\mu)<\alpha$ and any integer $m\geq 2$, the function 
$$
e_{\mu,m}\colon z\mapsto (\mu -z)^{-m},
$$
which belongs to $\mathcal{E}({\mathcal H}_{\alpha})$, 
satisfies $e_{\mu,m}(A)=R(\mu,A)^m$.

Let $\varphi\in H^{\infty}({\mathcal H}_{\alpha})$. 
We can define a closed, densely defined, operator 
$\varphi(A)$ by regularisation, as follows (see \cite{bat-haa} and \cite{haa1} for more on such constructions).
Take $\mu\in\C$ with
${\rm Re}(\mu)<\alpha$, and set $e=e_{\mu,2}$. Then 
$e\varphi \in \mathcal{E}({\mathcal H}_{\alpha})$ and 
$e(A) = R(\mu,A)^2$ is injective. Then $\varphi(A)$ is defined by 
\begin{equation}\label{eqregul}
\varphi(A) = e(A)^{-1}(e\varphi)(A),
\end{equation}
with domain ${\rm Dom}(\varphi(A))$
equal to the space of all $x\in X$ such that $[(e\varphi)(A)](x)$ belongs
to the range of $e(A)$ $(= {\rm Dom}(A^2))$. 
It turns out that this definition does not depend on the choice of $\mu$.

The half-plane holomorphic functional calculus $\varphi\mapsto \varphi(A)$
satisfies the following ``Convergence Lemma'', provided
by \cite[Theorem 3.1]{bat-haa}.

\begin{lem}\label{CVLemma}
Assume that $A$ satisfies (\ref{HP}) for some $\omega\in\R$ and
let $\alpha<\omega$. Let $(\varphi_i)_i$ be a net of 
$H^\infty({\mathcal H}_\alpha)$
such that $\varphi_i(A)\in B(X)$ for all $i$
and let $\varphi\in H^\infty({\mathcal H}_\alpha)$ such that $\varphi_i\to \varphi$
pointwise on $R_\alpha$, when $i\to\infty$. If
$$
\sup_i\norm{\varphi_i}_{H^\infty(R_\alpha)}\,<\infty
\qquad\hbox{and}\qquad 
\sup_i\norm{\varphi_i(A)}_{B(X)}\,<\infty,
$$
then $\varphi(A)\in B(X)$ and for any $x\in X$, 
$[\varphi_i(A)](x)\to [\varphi(A)](x)$
when $i\to\infty$.
\end{lem}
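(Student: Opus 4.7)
The plan is to reduce the statement to a dominated-convergence argument at the level of the regularised integrand, and then transfer the conclusion back to $\varphi(A)$ via the regularisation formula (\ref{eqregul}). First I would fix $\mu\in\C$ with ${\rm Re}(\mu)<\alpha$ and set $e = e_{\mu,2}\in\mathcal{E}(\mathcal{H}_\alpha)$, so that $e(A) = R(\mu,A)^2\in B(X)$ is injective with dense range ${\rm Dom}(A^2)$. Since each $\varphi_i(A)$ lies in $B(X)$, the multiplicativity of the regularised calculus gives $(e\varphi_i)(A) = e(A)\varphi_i(A)$ for every $i$.

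Next, I would pick any $\beta\in(\alpha,\omega)$ and apply the defining integral (\ref{fofA}) to $e\varphi_i\in\mathcal{E}(\mathcal{H}_\alpha)$: for $x\in X$,
\[
(e\varphi_i)(A)x \,=\, \frac{-1}{2\pi}\int_{-\infty}^{\infty}
\frac{\varphi_i(\beta+is)}{(\mu-\beta-is)^{2}}\,R(\beta+is,A)x\,ds.
\]
With $M := \sup_i \normeinf{\varphi_i}$ and $K := \sup\{\norme{R(z,A)}\,:\,{\rm Re}(z)\leq\beta\}$ (finite by (\ref{HP}) applied at $\beta<\omega$), the integrand is dominated in norm by $|\mu-\beta-is|^{-2}MK\norme{x}$, which is $s$-integrable. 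Since $\varphi_i\to\varphi$ pointwise on $\mathcal{H}_\alpha$, Bochner's dominated convergence theorem then yields $(e\varphi_i)(A)x \to (e\varphi)(A)x$ for every $x\in X$, i.e.\ strong operator convergence.

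I would then unwind the regularisation. Put $N := \sup_i \norme{\varphi_i(A)}$. For any $y\in X$ and $x := e(A)y \in {\rm Dom}(A^2)$, the identity from step one gives
\[
\varphi_i(A)x \,=\, \varphi_i(A)e(A)y \,=\, (e\varphi_i)(A)y \,\longrightarrow\, (e\varphi)(A)y.
\]
Since ${\rm Dom}(A^2)$ is dense in $X$ and $\norme{\varphi_i(A)}\leq N$ uniformly, a standard three-epsilon argument extends this to strong convergence on all of $X$, producing a bounded operator $\Phi\in B(X)$ with $\norme{\Phi}\leq N$.

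Finally, to identify $\Phi$ with $\varphi(A)$, I would note that $(e\varphi_i)(A) = e(A)\varphi_i(A)$ converges strongly to both $(e\varphi)(A)$ (by step two) and to $e(A)\Phi$ (since $e(A)$ is bounded and $\varphi_i(A)\to\Phi$ strongly), so $(e\varphi)(A) = e(A)\Phi$. The definition (\ref{eqregul}) then forces $x\in{\rm Dom}(\varphi(A))$ with $\varphi(A)x = \Phi x$ for all $x\in X$, whence $\varphi(A) = \Phi\in B(X)$ and $\varphi_i(A)\to\varphi(A)$ strongly. The main delicate point I expect is the multiplicative identity $(e\varphi_i)(A) = e(A)\varphi_i(A)$ and its analogue for $\varphi$: these rest on the compatibility of the regularised calculus with products when one factor is bounded, which must be quoted from the general theory of \cite{bat-haa} rather than reproved in situ.
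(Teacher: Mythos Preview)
Your argument is correct. The paper itself does not give a proof of this lemma: it simply quotes it as \cite[Theorem 3.1]{bat-haa}. What you have written is essentially the standard proof of a convergence lemma in this abstract functional calculus setting: regularise by a fixed $e\in\mathcal{E}(\mathcal{H}_\alpha)$, use dominated convergence on the absolutely convergent integral (\ref{fofA}) for $(e\varphi_i)(A)$, pass to the limit on the dense subspace ${\rm Ran}(e(A))={\rm Dom}(A^2)$ using the uniform bound, and then identify the strong limit with $\varphi(A)$ via the definition (\ref{eqregul}). This is exactly the scheme used in \cite{bat-haa} (and, in the sectorial setting, in \cite[Proposition 5.1.4]{haa1}).

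One small remark: in step four you write $\varphi_i(A)x = \varphi_i(A)e(A)y = (e\varphi_i)(A)y$, which uses that $\varphi_i(A)$ commutes with $e(A)$ (step two gives the identity in the other order, $e(A)\varphi_i(A)=(e\varphi_i)(A)$). This commutation does hold, since the elementary calculus is a homomorphism and hence $(e\varphi_i)(A)e(A) = (e^2\varphi_i)(A) = e(A)(e\varphi_i)(A)$, from which $\varphi_i(A)e(A) = (e\varphi_i)(A)$ follows by applying $e(A)^{-1}$ on the left. You are right to flag the product law as the one point that leans on the general theory of \cite{bat-haa}; everything else is self-contained.
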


Let $(T_t)_{t\geq 0}$ be a bounded $C_0$-semigroup on $X$ and
let $-A$ denote its infinitesimal generator. For any
$b\in L^1(\R_+)$, we set
$$
\Gamma(A,b) :=\,\int_0^\infty b(t)\, T_t\, dt,
$$
where the latter integral is defined in the strong sense.
The mapping $b\mapsto \Gamma(A,b)$ is the so-called Hille-Phillips
functional calculus. We refer to \cite[Section 3.3]{haa1} for information and background. We recall that
this functional calculus is a Banach algebra homomorphism
$$
L^1(\R_+)\longrightarrow B(X).
$$

We now provide a compatibility result between the half-plane holomorphic functional calculus
and the Hille-Phillips functional calculus. This kind of compatibility properties
is very common in functional calculus (see e.g. \cite[Section 3.3]{haa1}) and we follow a classical approach.
Note that any $A$ as above satisfies (\ref{HP}) 
for $\omega=0$. Thus for any $\varepsilon >0$, the operator $A+\varepsilon$
satisfies (\ref{HP}) 
for $\omega=\varepsilon$. For any
$b\in L^1(\R_+)$, this allows us
to define $L_b(A+\varepsilon)$, where $L_b$
is the Laplace transform defined by (\ref{Lb}).

\begin{lem}\label{lemcompatibility}
Let $b\in L^1(\R_+)$ and let $-A$ be the generator
of a bounded $C_0$-semigroup on $X$. Then for any $\varepsilon>0$, we have
$$
L_b(A+\varepsilon) = \Gamma(A+\varepsilon,b).
$$
\end{lem}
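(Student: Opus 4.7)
The plan is to set $B := A + \varepsilon$ and reduce the claim to a direct integral computation via the regularized half-plane functional calculus. Observe that $-B$ generates the exponentially stable semigroup $S_t := e^{-t\varepsilon}T_t$, with $\norme{S_t} \leq Me^{-t\varepsilon}$ where $M = \sup_{t\geq 0}\norme{T_t}$; in particular $B$ satisfies \eqref{HP} with $\omega = \varepsilon$, and $\Gamma(A+\varepsilon, b) = \int_0^\infty b(t)S_t\,dt$. Fix $\mu \in \C$ with ${\rm Re}(\mu) < 0$ and numbers $\alpha, \beta$ with $0 < \alpha < \beta < \varepsilon$, and let $e(z) = e_{\mu,2}(z) = (\mu-z)^{-2}$, so that $e(B) = R(\mu,B)^2$ is injective and $eL_b$ belongs to $\mathcal{E}(\mathcal{H}_\alpha)$. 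The regularization formula \eqref{eqregul} reduces the lemma to proving
\[
(eL_b)(B) = e(B)\Gamma(B,b).
\]

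Applying \eqref{fofA} at level $\beta$ and inserting the Laplace representations $L_b(\beta+is) = \int_0^\infty e^{-(\beta+is)t}b(t)\,dt$ and $R(\beta+is, B) = -\int_0^\infty e^{(\beta+is)r}S_r\,dr$ (the latter valid since $\beta < \varepsilon$ and $\norme{S_r}$ decays exponentially at rate $\varepsilon$), a triple Fubini exchange moves the $ds$-integral to the inside. This inner integral is then evaluated by the Fourier inversion identity
\[
\frac{1}{2\pi}\int_{-\infty}^\infty (\gamma+is)^{-2} e^{is\tau}\, ds = \tau e^{-\gamma\tau}\mathbf{1}_{\{\tau>0\}}, \qquad {\rm Re}(\gamma)>0,
\]
applied with $\gamma = \beta-\mu$ and $\tau = r-t$. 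After the change of variable $u = r-t$ and use of the semigroup identity $S_{u+t} = S_u S_t$, the expression collapses to
\[
(eL_b)(B) = \int_0^\infty b(t) \Bigl(\int_0^\infty u e^{\mu u} S_u\,du\Bigr) S_t\, dt.
\]

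Finally, $\int_0^\infty u e^{\mu u} S_u\,du$ is identified as $e(B)$ as follows: the standard Laplace formula yields $\int_0^\infty e^{\mu u}S_u\,du = R(-\mu,-B) = -R(\mu,B)$ (valid because ${\rm Re}(-\mu) > -\varepsilon$), and differentiating in $\mu$ together with $\partial_\mu R(\mu,B) = -R(\mu,B)^2$ gives $\int_0^\infty u e^{\mu u} S_u\,du = R(\mu,B)^2 = e(B)$. Hence $(eL_b)(B) = e(B)\Gamma(B,b)$, and applying $e(B)^{-1}$ concludes. The main obstacle will be the rigorous justification of the triple Fubini exchange: the decay rates $|e(\beta+is)| = O(|s|^{-2})$ as $|s|\to\infty$, $b(t) e^{-\beta t} \in L^1(\R_+)$, and $\norme{S_r} e^{\beta r} = O(e^{(\beta-\varepsilon)r})$ must be combined carefully, with the strict inequality $\beta < \varepsilon$ playing the key role in the last estimate.
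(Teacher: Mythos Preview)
Your proof is correct and takes a somewhat different route from the paper's. The paper proceeds in two steps: it first treats the special case $L_b \in \mathcal{E}(\mathcal{H}_0)$ by a direct computation (expand $R(\beta+is,A+\varepsilon)$ via the Laplace formula, Fubini, then Fourier inversion of $s \mapsto L_b(\beta+is)$ itself), and then reduces the general case to this one by choosing the specific regularizer $e(z)=(1+z)^{-2}=L_c(z)$ with $c(t)=te^{-t}$, so that $eL_b=L_{b\star c}$ falls under the special case and the multiplicativity of the Hille--Phillips calculus yields $(eL_b)(A+\varepsilon)=\Gamma(A+\varepsilon,c)\,\Gamma(A+\varepsilon,b)=e(A+\varepsilon)\,\Gamma(A+\varepsilon,b)$. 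You instead handle general $b$ in a single pass: you keep an arbitrary regularizer $e_{\mu,2}$ inside the contour integral, expand both $L_b$ and the resolvent as Laplace integrals, and identify the resulting $s$-integral as the inverse Fourier transform of $(\gamma+is)^{-2}$, after which the semigroup law factors out $e(B)=R(\mu,B)^2$ directly. The paper's argument is structurally cleaner --- it isolates the Fourier inversion in the situation where $L_b$ is itself integrable along vertical lines, and then exploits the algebraic identity $L_bL_c=L_{b\star c}$ together with multiplicativity of Hille--Phillips --- while your approach avoids the two-step reduction and the explicit appeal to that multiplicativity, at the price of a triple Fubini and a slightly less elementary Fourier inversion.
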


\begin{proof}
Let $\varepsilon >0$, let $\beta\in(0,\varepsilon)$, 
and let $b\in L^1(\R_+)$.

First suppose that $L_b \in \mathcal{E}({\mathcal H}_0)$. 
Then by the Laplace formula, 
$$
L_b(A+\varepsilon) 
=\, 
\frac{1}{2\pi}
\int_{-\infty}^{\infty}
L_b(\beta + is)
\Big(\int_{0}^{\infty}e^{ist}e^{\beta t}e^{-\varepsilon t}T_t\,dt\Big)ds.
$$
Since $L_b \in \mathcal{E}({\mathcal H}_0)$, the function $s\mapsto L_b(\beta +is)$ belongs to 
$L^1(\R)$. Hence by Fubini's theorem,
$$
L_b(A+\varepsilon) = 
\,\frac{1}{2\pi}\int_{0}^{\infty}e^{\beta t}
\Big(\int_{-\infty}^{\infty}L_b(\beta + is)e^{ist}\,ds\Big)
\,e^{-\varepsilon t}T_t\,dt.
$$
By the Fourier inversion Theorem, we have
\[
\frac{1}{2\pi}\int_{-\infty}^{\infty}L_b(\delta +is)e^{ist}\,ds
 = e^{-\beta t}b(t)
\]
for each $t>0$. We deduce that
\[
L_b(A+\varepsilon) = \int_{0}^{\infty}b(t)e^{-\varepsilon t}T_t\, dt = \Gamma(A+\varepsilon,b).
\]

For the general case, let us consider $e \in \mathcal{E}({\mathcal H}_0)$
defined by $e(z) = (1+z)^{-2}$. We note that $e$
is the Laplace transform of the function $c\in L^1(\R_+)$ defined by $c(t) = te^{-t}$.
The product $eL_b$, which belongs to 
$\mathcal{E}({\mathcal H}_0)$, is therefore the Laplace transform of $b\star c$. Hence by the first part of this proof,
\[ 
(eL_b)(A+\varepsilon) = \Gamma(A+\varepsilon,b\star c).
\]
The multiplicativity of the Hille-Phillips functional calculus yields 
$\Gamma(A+\varepsilon,b\star c)=\Gamma(A+\varepsilon,c)\Gamma(A+\varepsilon, b)$. Further
$e(A+\varepsilon) = \Gamma(A+\varepsilon, c)$ by the Laplace formula. Thus we
have
$$
e(A+\varepsilon)\Gamma(A+\varepsilon,b) = (eL_b)(A+\varepsilon).
$$
Applying \eqref{eqregul}, we obtain that 
$L_b(A+\varepsilon) = \Gamma(A+\varepsilon,b)$ as required. 
\end{proof}

\subsection{Functional calculus on $\A_0(\C_+)$}\label{MAIN}
Throughout this subsection, we fix a Hilbert space
$H$, we let $(T_t)_{t \geq 0}$ be a bounded $C_0$-semigroup on $H$
and we let $-A$ denote its infinitesimal generator. We set
\[
C := \sup\{ \norme{T_t}\, :\, t\geq 0 \}.
\]

For any $f\in C_{00}(\R)$ and any $h \in H^1(\R)$, the function 
$$
b = (2\pi)^{-1}\widehat{f}\widehat{h}
$$
belongs to $L^1(\R_+)$ and  
we have $\widehat{b}(-\,\cdotp)= f\star h$. Consequently, 
\[
(f\star h)^{\sim} = L_b.
\]
Further we have the following key estimate, which is
inspired by \cite[Proposition 4.16]{pis1}.

\begin{lem}\label{key}
For any $f\in C_{00}(\R)$ and any $h \in H^1(\R)$, 
\begin{equation}\label{inelemcalfon}
\bignorm{\Gamma\bigl(A, (2\pi)^{-1}\widehat{f}\widehat{h}\bigr)} \leq  
C^2\normeinf{f}\norme{h}_1.
\end{equation}
\end{lem}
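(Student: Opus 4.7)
The plan is to reduce the desired estimate to a Plancherel-type bound in the Bochner space $L^2(\R;H)$, via an inner-outer factorization of $h$; this is the continuous-time analogue of Pisier's proof of Peller's theorem, see \cite[Proposition 4.16]{pis1}.

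First, I would factor $h=h_1h_2$ with $h_1,h_2\in H^2(\R)$ and $\norme{h_1}_2\norme{h_2}_2=\norme{h}_1$, using the inner-outer factorization of $H^1(\R)$. Since $\widehat{h_1},\widehat{h_2}$ vanish on $\R_-$, the convolution theorem yields $\widehat{h}(t)=(2\pi)^{-1}\int_0^t\widehat{h_1}(s)\widehat{h_2}(t-s)\,ds$ for $t\geq 0$. Inserting this into $b=(2\pi)^{-1}\widehat{f}\widehat{h}$, using the semigroup identity $T_t=T_sT_{t-s}$ and changing variable to $u=t-s$, one arrives formally at
$$\Gamma(A,b)=\frac{1}{(2\pi)^2}\int_0^\infty\!\!\int_0^\infty\widehat{f}(s+u)\widehat{h_1}(s)\widehat{h_2}(u)T_sT_u\,du\,ds.$$

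Next, fix $x,y\in H$ and introduce the Bochner $L^2$-functions
$$\Phi(u)=\chi_{[0,\infty)}(u)\widehat{h_2}(u)T_ux,\qquad\Psi(s)=\chi_{[0,\infty)}(s)\overline{\widehat{h_1}(s)}T_s^*y,$$
which satisfy $\norme{\Phi}_{L^2(\R;H)}\leq\sqrt{2\pi}\,C\norme{x}\norme{h_2}_2$ and $\norme{\Psi}_{L^2(\R;H)}\leq\sqrt{2\pi}\,C\norme{y}\norme{h_1}_2$ thanks to $\norme{T_t}\leq C$ and scalar Plancherel. Taking the pairing $\langle\cdot\,x,y\rangle$ in the previous display and substituting $t=s+u$ transforms the double integral into $(2\pi)^{-2}\int_\R\widehat{f}(t)k(t)\,dt$, where $k(t)=\int_\R\langle\Phi(u),\Psi(t-u)\rangle_H\,du$. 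A routine Fubini computation establishes the vector-valued Plancherel identity $\widehat{k}(r)=\langle\widehat{\Phi}(r),\widehat{\Psi}(-r)\rangle_H$, and Parseval then converts this to
$$\langle\Gamma(A,b)x,y\rangle=\frac{1}{(2\pi)^2}\int_\R f(r)\langle\widehat{\Phi}(r),\widehat{\Psi}(-r)\rangle_H\,dr.$$
Cauchy--Schwarz in $L^2(\R;H)$, combined with $\norme{\widehat{\Phi}}_2=\sqrt{2\pi}\norme{\Phi}_2$ and the analogous identity for $\Psi$, then gives
$$|\langle\Gamma(A,b)x,y\rangle|\leq\frac{\normeinf{f}}{(2\pi)^2}\norme{\widehat{\Phi}}_2\norme{\widehat{\Psi}}_2\leq C^2\normeinf{f}\norme{h_1}_2\norme{h_2}_2\norme{x}\norme{y}=C^2\normeinf{f}\norme{h}_1\norme{x}\norme{y},$$
from which \eqref{inelemcalfon} follows.

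The principal obstacle will be justifying the Fubini manipulations, since the iterated integrals above need not be absolutely convergent when $\widehat{h_i}\notin L^1(\R)$ (which is not automatic for $h_i\in H^2(\R)$). I would address this via a density argument: first prove the estimate on the dense subspace of pairs $(h_1,h_2)\in H^2(\R)\times H^2(\R)$ for which $\widehat{h_1}$ and $\widehat{h_2}$ are compactly supported in $(0,\infty)$, where all iterated integrals are absolutely convergent, and then pass to the limit using continuity of $b\mapsto\Gamma(A,b)\colon L^1(\R_+)\to B(H)$ together with the continuity of $h\mapsto\widehat{h}$ on $H^1(\R)$.
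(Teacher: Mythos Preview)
Your proposal is correct and follows essentially the same approach as the paper: factor $h=h_1h_2$ with $h_i\in H^2(\R)$, use the semigroup property (equivalently, multiplicativity of the Hille--Phillips calculus) to split $\Gamma(A,b)$ into a product-type expression, then apply Cauchy--Schwarz followed by Plancherel in $L^2(\R;H)$, and finally close by density. The paper carries out the computation by first expanding $\widehat{f}(t)=\int f(s)e^{-its}\,ds$ and working with the operator-valued functions $W(s)=\int_0^\infty\widehat{h_1}(r)e^{-irs}T_r\,dr$ and $V(s)$ directly, whereas you arrive at the same integral via the convolution $k$ and a Parseval step; these are two ways of writing the same identity, and the density argument (the paper uses $H^2\cap\S(\R)$, you use compactly supported Fourier transforms) is likewise the same in substance.
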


\begin{proof}
We fix $f\in C_{00}(\R)$. 
Let $w,v \in H^2(\R)\cap \mathcal{S}(\R)$ and let $h=wv$. By definition,
\[
\Gamma\bigl(A, (2\pi)^{-1}\widehat{f}\widehat{h}\bigr)= \,\frac{1}{2\pi}\,
\int_{0}^{\infty}\widehat{f}(t)\widehat{wv}(t)T_t\, dt. 
\]
By assumption, $\widehat{w}$ and $\widehat{v}$ belong to $L^1(\R_+)$
hence $\widehat{wv} = (2\pi)^{-1} \widehat{w}\star\widehat{v}$ belongs to $L^1(\R_+)$. Further 
$f$ belongs to $L^1(\R)$. We can therefore apply Fubini's Theorem and we obtain that 
\[
\frac{1}{2\pi}\,\int_{0}^{\infty}\widehat{f}(t)
\widehat{wv}(t)T_t\, dt =
\frac{1}{2\pi}\, \int_{-\infty}^{\infty} 
f(s)\Bigr(\int_{0}^{\infty}\widehat{wv}(t)
e^{-its}T_t\, dt\Bigl)\,ds.
\]
Note that for any $s\in\R$, 
$$
\int_{0}^{\infty}\widehat{wv}(t)
e^{-its}T_t\, dt = \Gamma(A+is,\widehat{wv}).
$$
According to the multiplicativity of the 
Hille-Phillips functional calculus, we have
\[
 \int_{0}^{\infty}\widehat{wv}(t)e^{-its}T_t \,dt = 
\frac{1}{2\pi} 
\Big(\int_{0}^{\infty}\widehat{w}(r)e^{-irs}T_r\,dr 
\Big)\Big(\int_{0}^{\infty}\widehat{v}(t)e^{-its}T_{t}\, dt\Big). 
\]
Let $W,V\colon\R\to B(H)$ be defined by
$$
W(s) = \int_{0}^{\infty} \widehat{w}(r)e^{-irs}T_r\,dr
\qquad\hbox{and}\qquad
V(s) = \int_{0}^{\infty} \widehat{v}(t)e^{-its}T_t\,dt,
\qquad s\in\R.
$$
It follows from above that for any $x,x^*\in H$, we have
\begin{equation}\label{equaforgammbound}
\bigl\langle \Gamma\bigl(A, (2\pi)^{-1}\widehat{f}\widehat{h}\bigr)x,x^* \bigr\rangle = 
\frac{1}{4\pi^2}\int_{-\infty}^\infty f(s) \langle 
W(s) x, V(s)^*x^* \rangle\,ds .
\end{equation}
Applying the Cauchy-Schwarz inequality, we deduce
\[
\big|\big\langle \Gamma\bigl(A, (2\pi)^{-1}
\widehat{f}\widehat{h}\bigr)x,x^* \big\rangle\big| 
\leq \frac{1}{4\pi^2}\norme{f}_{\infty}\Bigl(\int_{-\infty}^\infty\norme{W(s)x}^2ds \Bigr)^{\frac{1}{2}}
\Bigl(\int_{-\infty}^\infty \norme{V(s)^*x^*}^2 ds\Bigr)^{\frac{1}{2}}. 
\]
According to the Fourier-Plancherel equality on $L^2(\R;H)$, we have
$$
\int_{-\infty}^\infty \norme{W(s)x}^2 ds 
= 2\pi \int_{0}^\infty |\widehat{w}(r)|^2\norme{T_r x}^2dr.
$$
This implies
$$
\int_{-\infty}^\infty \norme{W(s)x}^2 ds 
\leq 2\pi C^2\int_{0}^\infty |\widehat{w}(r)|^2\norme{x}^2 dr \,
=  4\pi^2 C^2 \norme{w}^2_2\norme{x}^2.
$$
Similarly, we have
\[
\int_{-\infty}^\infty \norme{V(s)^*x^*}^2ds \leq 4\pi^2 C^2\norme{v}^2_2\norme{x^*}^2.
\]
Hence,
$$
\big|\big\langle \Gamma\bigl(A, (2\pi)^{-1}
\widehat{f}\widehat{h}\bigr)x,x^* \big\rangle\big|  
\leq  C^2\norme{f}_{\infty}\norme{w}_2\norme{v}_2 \norm{x}\norm{x^*}.
$$
Since this is true for all $x,x^*$, we have proved that 
$$
\bignorm{\Gamma\bigl(A, (2\pi)^{-1}\widehat{f}\widehat{h}\bigr)}
\leq  C^2\norme{f}_{\infty}\norme{w}_2\norme{v}_2 .
$$

Now let $h$ be an arbitrary element of 
$H^1(\R)$. As is well-known (see e.g. 
\cite[Exercise 1, p. 84]{gar}), there exist $w,v\in H^2(\R)$ 
such that $h =wv$
and $\norme{w}_2^2 =\norme{v}_2^2 = \norme{h}_1$.

Since $\mathcal{F}(\mathcal{S}(\R)) = \mathcal{S}(\R)$,
it follows from (\ref{H2=FL2}) that 
$\mathcal{F}(H^2(\R)\cap\mathcal{S}(\R)) 
= L^2(\R_+) \cap \mathcal{S}(\R)$. Since 
$ L^2(\R_+) \cap \mathcal{S}(\R)$ is dense in $L^2(\R_+)$, we readily
deduce that $H^2(\R)\cap\mathcal{S}(\R)$ is dense in 
$H^2(\R)$.

Thus, there exist sequences $(w_k)_{k\in \N}$, $(v_k)_{k\in \N} $ 
in $H^2(\R)\cap\mathcal{S}(\R)$ such that 
$w_k \to w$ and  $v_k \to v$ in $H^2(\R)$, when $k\to\infty$. 
This implies that 
$$
\norm{w_k}_2\norm{v_k}_2\longrightarrow\norm{h}_1,
$$
when $k\to\infty$ and
$w_kv_k \to wv=h$ in $H^1(\R)$, when $k\to\infty$.
Consequently,
$$
\bignorm{\Gamma\bigl(A, (2\pi)^{-1}\widehat{f}\widehat{w_kv_k}\bigr)
-\Gamma\bigl(A, (2\pi)^{-1}\widehat{f}\widehat{h}\bigr)} \longrightarrow 0
$$
when $k\to\infty$. Indeed,
\begin{align*}
\bignorm{\Gamma\bigl(A, (2\pi)^{-1}\widehat{f}\widehat{w_kv_k}\bigr)
-\Gamma\bigl(A, (2\pi)^{-1}\widehat{f}\widehat{h}\bigr)} 
&= \norme{\int_{0}^{\infty} \widehat{f}(t)\widehat{(w_kv_k-wv)}(t)T_t\,dt} \\
& \leq C\norm{\widehat{f}}_1\norm{\widehat{(w_kv_k-wv)}}_\infty \\
& \leq C\norm{\widehat{f}}_1\norm{w_kv_k-wv}_1.
\end{align*}
For all $k \in \N$, we have
$$
\bignorm{\Gamma\bigl(A, (2\pi)^{-1}\widehat{f}\widehat{w_kv_k}\bigr)} \leq  
C^2\normeinf{f}\norme{u_k}_2\norm{v_k}_2,
$$
by the first part of the proof. Passing to the limit, we obtain (\ref{inelemcalfon}).
\end{proof}

We now arrive at the main result of this subsection.

\begin{thm}\label{main1}
There exists a unique bounded homomorphism $\rho_{0,A}\colon
\A_0(\C_+)\to B(H)$ such that
\begin{equation}\label{main2}
\rho_{0,A}(L_b)=\int_0^\infty b(t)T_t\,dt
\end{equation}
for all $b\in L^1(\R_+)$. Moreover $\norm{\rho_{0,A}}\leq C^2$.
\end{thm}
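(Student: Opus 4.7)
Uniqueness is immediate from Lemma~\ref{Laplace}: the Laplace transforms $\{L_b : b \in L^1(\R_+)\}$ form a dense subspace of $\A_0(\C_+)$, so any bounded linear map satisfying (\ref{main2}) is determined by its values on them. For existence, I would exploit the half-plane holomorphic functional calculus of Subsection~\ref{HPFC}. For each $\varepsilon > 0$, the generator $A+\varepsilon$ satisfies (\ref{HP}) with $\omega=\varepsilon$, so every $\tilde F \in \A_0(\C_+)\subset H^\infty(\C_+)$ defines a closed operator $\tilde F(A+\varepsilon)$ via the regularisation (\ref{eqregul}). The plan is to prove the $\varepsilon$-uniform estimate
\[
\norm{\tilde F(A+\varepsilon)}_{B(H)} \leq C^2\norme{F}_{\A_0},\qquad F\in\A_0(\R),\ \varepsilon>0,
\]
and then to define $\rho_{0,A}(\tilde F)$ as the strong-operator limit of $\tilde F(A+\varepsilon)$ as $\varepsilon\to 0^+$.

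\textbf{The uniform bound.} Fix $\epsilon>0$ and choose a decomposition $F=\sum_k f_k\star h_k$ with $f_k\in C_0(\R)$, $h_k\in H^1(\R)$, and $\sum_k\normeinf{f_k}\norme{h_k}_1\leq\norme{F}_{\A_0}+\epsilon$. The semigroup $(e^{-\varepsilon t}T_t)_{t\geq 0}$ generated by $-(A+\varepsilon)$ is still bounded by $C$, so Lemma~\ref{key} combined with Lemma~\ref{lemcompatibility} gives, for any $f\in C_{00}(\R)$ and $h\in H^1(\R)$,
\[
\norm{(f\star h)^{\sim}(A+\varepsilon)} = \norm{\Gamma(A+\varepsilon,(2\pi)^{-1}\widehat{f}\widehat{h})} \leq C^2\normeinf{f}\norme{h}_1.
\]
Approximating $f\in C_0(\R)$ in sup-norm by elements of $C_{00}(\R)$ and invoking the Convergence Lemma~\ref{CVLemma} extends this estimate to all $f\in C_0(\R)$. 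Now set $\varphi_N=\sum_{k\leq N}(f_k\star h_k)^{\sim}$. Since $\sum_{k\leq N}f_k\star h_k\to F$ uniformly on $\R$, the maximum principle for Poisson extensions gives $\varphi_N\to\tilde F$ uniformly on $\C_+$, so pointwise on any $\mathcal H_\alpha$ with $\alpha\in(0,\varepsilon)$; moreover $\sup_N\norme{\varphi_N}_{H^\infty(\C_+)}\leq\norme{F}_{\A_0}+\epsilon$, and applying the extended atomic estimate term-by-term yields $\sup_N\norm{\varphi_N(A+\varepsilon)}\leq C^2(\norme{F}_{\A_0}+\epsilon)$. A second application of Lemma~\ref{CVLemma} then gives $\tilde F(A+\varepsilon)\in B(H)$ with the same bound; letting $\epsilon\to 0$ yields the desired inequality.

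\textbf{Passage to $\varepsilon=0$, multiplicativity, and the main obstacle.} When $F=L_b$, Lemma~\ref{lemcompatibility} identifies $\tilde F(A+\varepsilon)$ with $\Gamma(A+\varepsilon,b)$, and dominated convergence (using $b\in L^1(\R_+)$ and $\norme{T_t}\leq C$) yields $\tilde F(A+\varepsilon)x\to\Gamma(A,b)x$ as $\varepsilon\to 0^+$ for every $x\in H$. Combining this with the density of $\{L_b\}$ in $\A_0(\C_+)$ and the uniform $\varepsilon$-independent bound, a standard $3\epsilon$-argument shows that $\tilde F(A+\varepsilon)x$ is Cauchy as $\varepsilon\to 0^+$ for every $F\in\A_0(\R)$ and $x\in H$. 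Define $\rho_{0,A}(\tilde F)$ as the resulting strong-operator limit. Linearity, the identity (\ref{main2}), and $\norm{\rho_{0,A}}\leq C^2$ are then immediate, and multiplicativity (already valid on $\{L_b\}$ through the Hille-Phillips calculus and $L_{b_1\star b_2}=L_{b_1}L_{b_2}$) extends to $\A_0(\C_+)$ by the Banach algebra structure together with the norm-continuity of $\rho_{0,A}$. The substantive difficulty is the uniform bound: generic decompositions of $F\in\A_0(\R)$ involve factors $f_k\in C_0(\R)$ whose Fourier transforms need not be integrable, so the atoms $(f_k\star h_k)^{\sim}$ are not individually Laplace transforms of $L^1$-functions and Lemma~\ref{key} cannot be applied directly term-by-term; the double use of Lemma~\ref{CVLemma}, first at the atomic level to pass from $C_{00}$ to $C_0$ and then globally on partial sums, is the bridge.
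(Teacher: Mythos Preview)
Your proof is correct and follows essentially the same architecture as the paper's: establish the atomic estimate $\norm{(f\star h)^\sim(A+\varepsilon)}\leq C^2\normeinf{f}\norme{h}_1$ via Lemma~\ref{key} and Lemma~\ref{lemcompatibility}, upgrade from $C_{00}$ to $C_0$ with the Convergence Lemma~\ref{CVLemma}, pass to infinite sums with a second application of Lemma~\ref{CVLemma}, and then let $\varepsilon\to 0$. The one genuine difference is in this last step. The paper introduces the bounded bilinear maps $u_A,u_{A+\varepsilon}\colon C_0(\R)\times H^1(\R)\to B(H)$ as abstract continuous extensions of $(f,h)\mapsto\Gamma(A,(2\pi)^{-1}\widehat f\,\widehat h)$, proves $u_{A+\varepsilon}(f,h)\to u_A(f,h)$ in norm for each pair, and then obtains $\widetilde F(A+\varepsilon)\to\sum_k u_A(f_k,h_k)$ in $B(H)$ by a tail estimate on the series. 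Your route instead exploits the density of $\{L_b\}$ in $\A_0(\C_+)$ together with the $\varepsilon$-uniform bound in a $3\epsilon$-argument to show directly that $\widetilde F(A+\varepsilon)x$ is Cauchy as $\varepsilon\to 0^+$. Your variant is slightly more economical (it avoids the bilinear-map bookkeeping and gives only strong-operator convergence, which is all that is needed), while the paper's version yields the additional information recorded in Remark~\ref{Indep1}, namely the explicit formula $\rho_{0,A}(\widetilde F)=\sum_k u_A(f_k,h_k)$ and its independence of the chosen representation.
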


\begin{proof}
By Lemma \ref{key} and the density of $C_{00}(\R)$ in $C_0(\R)$,
there exists a unique bounded bilinear map 
\[
u_A \colon C_0(\R)\times H^1(\R) \longrightarrow B(H)
\]
such that $u_A(f,h) = 
\Gamma\bigl(A,(2\pi)^{-1}\widehat{f}\widehat{h}\bigr)$ 
for each $(f,h)\in C_{00}(\R) \times H^1(\R)$. 
Moreover $\norm{u_A}\leq C^2$.

For each $\varepsilon >0$, $(A+\varepsilon)$ is 
the negative generator of the semigroup 
$(e^{-\varepsilon t}T_t)_{t \geq 0}$. Therefore, 
in the same manner as above, one can define
$u_{A+\varepsilon} \colon C_0(\R)\times H^1(\R) \rightarrow B(H)$ 
and we have the uniform estimate
\begin{equation}\label{estuAeps}
\forall \varepsilon > 0, \qquad 
\bignorm{u_{A+\varepsilon} : C_0(\R)\times H^1(\R) 
\longrightarrow B(H)} \leq  C^2.
\end{equation}

We claim that for each $\varepsilon>0$, we have 
\begin{equation}\label{proofpropcalfoncstep1}
u_{A+\varepsilon}(f,h)=
(f\star h)^\sim(A+\varepsilon), \qquad f\in C_0(\R),\, h\in H^1(\R). 
\end{equation}	
(We recall that the operator on the right-hand side
is defined by the half-plane holomorphic functional
calculus. In particular the above formula
shows that $(f\star h)^\sim(A+\varepsilon)$ is bounded.)
Recall that if $f\in C_{00}(\R)$, then $b=(2\pi)^{-1}\widehat{f}\widehat{h}
\in L^1(\R_+)$ and $(f\star h)^\sim=L_b$. Hence 
(\ref{proofpropcalfoncstep1}) is given by
Lemma \ref{lemcompatibility} in this case. In the general case, let 
$(f_n)_{n\in \N}$ be a sequence of $C_{00}(\R)$ such that 
$f_n \to f$ in $C_0(\R)$, when $n\to\infty$. Then
$u_{A + \varepsilon}(f_n,h) \to u_{A + \varepsilon}(f,h)$,
hence $(f_n\star h)^\sim(A+\varepsilon)
\to u_{A + \varepsilon}(f,h)$.
Moreover 
$(f_n\star h)^\sim\to (f\star h)^\sim$ in $H^\infty(\C_+)$. 
Therefore by the Convergence Lemma \ref{CVLemma}, 
$(f\star h)^\sim(A+\varepsilon)$ is bounded and 
\eqref{proofpropcalfoncstep1} holds true.

Next we show that in $B(H)$ we have
\begin{equation}\label{eps-zero}
u_{A+\varepsilon}(f,h)
\underset{\varepsilon \rightarrow 0}\longrightarrow 
u_A(f,h),\qquad f\in C_0(\R),\, h\in H^1(\R).
\end{equation}
In the case when $f\in C_{00}(\R)$,
$$
u_{A+\varepsilon}(f,h)=
\,\frac{1}{2\pi}\,\int_0^\infty
\widehat{f}(t)\widehat{h}(t)e^{-\varepsilon t} T_t\,dt
$$
for all $\varepsilon\geq 0$. Hence 
$$
\bignorm{u_{A}(f,h) -u_{A+\varepsilon}(f,h)}
\leq\,\frac{C}{2\pi}
\int_0^\infty\bigl\vert
\widehat{f}(t)\widehat{h}(t)\bigr\vert (1-e^{-\varepsilon t})\, dt.
$$
This integral tends to $0$ when $\varepsilon\to 0$,
by Lebesgue's dominated convergence theorem. This
yields the result in this case.
The general case follows from the density of $C_{00}(\R)$
in $C_0(\R)$ and the uniform estimate (\ref{estuAeps}).

We now construct $\rho_{0,A}$.
Let $F\in \A_0(\R)$ and consider
two sequences $(f_k)_{k\in \N}$ of $C_0(\R)$ and $(h_k)_{k\in \N}$ of $H^1(\R)$ satisfying \eqref{ineA} and \eqref{equA2}. We let
$$
F_N=\sum_{k=1}^N f_k\star h_k,\qquad N\geq 1.
$$
For any fixed $\varepsilon>0$,
it follows from 
\eqref{proofpropcalfoncstep1} that for any $N\geq 1$,
$$
\widetilde{F_N}(A+\varepsilon)
= \sum_{k=1}^N u_{A+\varepsilon}(f_k,h_k).
$$
We have both that $\widetilde{F_N}
\to \widetilde{F}$ in $H^\infty(\C_+)$ and that
$\sum_{k=1}^N u_{A+\varepsilon}(f_k,h_k)\to 
\sum_{k=1}^\infty u_{A+\varepsilon}(f_k,h_k)$ in $B(H)$.
Appealing again to Lemma \ref{CVLemma}, we deduce that 
$\widetilde{F}(A+\varepsilon)\in B(H)$ and that 
\begin{equation}\label{proofpropcalfoncstep2}
\widetilde{F}(A+\varepsilon) = \sum_{k=1}^\infty
u_{A+\varepsilon}(f_k,h_k).
\end{equation}

We observe that 
\begin{equation}\label{proofpropcalfoncstep3}
\sum_{k=1}^\infty
u_{A+\varepsilon}(f_k,h_k)  
\underset{\varepsilon \rightarrow 0}
\longrightarrow \sum_{k=1}^\infty
u_A(f_k,h_k)
\end{equation}
in $B(H)$.
To check this, let $a>0$ and choose $N\geq 1$ such that $\sum_{k=N+1}^\infty
\norm{f_k}_\infty\norm{h_k}_1\leq a$. 
We  have
\begin{align*}
\Bignorm{
\sum_{k=1}^{\infty} 
u_{A+\varepsilon} (f_k,h_k)\,-\,
\sum_{k=1}^{\infty} 
u_{A} (f_k,h_k)} &\,\leq 
\Bignorm{\sum_{k=1}^{N} 
u_{A+\varepsilon} (f_k,h_k)\,-\,
\sum_{k=1}^{N} 
u_{A} (f_k,h_k)}\\ & +\,
\sum_{k=N+1}^{\infty} \norm{u_{A+\varepsilon}(f_k,h_k)}\ +\,
\sum_{k=N+1}^{\infty} \norm{u_{A}(f_k,h_k)}.
\end{align*}
By the uniform estimate (\ref{estuAeps}),
this implies that
$$
\Bignorm{
\sum_{k=1}^{\infty} 
u_{A+\varepsilon} (f_k,h_k)\,-\,
\sum_{k=1}^{\infty} 
u_{A} (f_k,h_k)} \,\leq 
\Bignorm{\sum_{k=1}^{N} 
u_{A+\varepsilon} (f_k,h_k)\,-\,
\sum_{k=1}^{N} 
u_{A} (f_k,h_k)} +\,2Ca.
$$
Applying (\ref{eps-zero}), we deduce that 
$$
\Bignorm{
\sum_{k=1}^{\infty} 
u_{A+\varepsilon} (f_k,h_k)\,-\,
\sum_{k=1}^{\infty} 
u_{A} (f_k,h_k)} \,\leq 3Ca
$$
for $\varepsilon>0$ small enough, which shows the result.

Combining (\ref{proofpropcalfoncstep2})
and (\ref{proofpropcalfoncstep3}) we obtain that 
$\widetilde{F}(A+\varepsilon)$ has a limit in $B(H)$, 
when $\varepsilon\to 0$. We set
$$
\rho_{0,A}(\widetilde{F}) :=\lim_{\varepsilon\to 0} \widetilde{F}
(A+\varepsilon).
$$
It is plain that $\rho_{0,A}\colon \A_0(\C_+)\to B(H)$
is a linear map. It follows from the construction that
$$
\norm{\rho_{0,A}(\widetilde{F})}_{\A_0}\leq C^2
\norm{\widetilde{F}}_{\A_0}
$$
for any
$F\in\A_0(\R)$, hence $\rho_{0,A}$ is bounded with $\norm{\rho_{0,A}}\leq C^2$.

Let $b\in L^1(\R_+)$. By the compatibility Lemma  
\ref{lemcompatibility}, we have
$$
L_b(A+\varepsilon) =\int_0^\infty  b(t)e^{-\varepsilon t}
 T_t\,dt
$$
for all $\varepsilon >0$. Passing to the limit and using
Lebesgue's dominated convergence theorem, we obtain
(\ref{main2}).

It follows from the density of $\{L_b \,:\, b\in L^1(\R_+)\}$ 
in $\A_0(\C_+)$, given by Lemma \ref{Laplace},
that $\rho_{0,A}$ is unique. 
Morever the multiplicativity of the Hille-Phillips functional
calculus ensures that $\rho_{0,A}$ is a Banach algebra homomorphism.
\end{proof}

\begin{rq1}\label{Indep1} Let $F\in \A_0(\R)$ and let
$(f_k)_{k\in \N}$ and $(h_k)_{k\in \N}$ be sequences 
of $C_0(\R)$ and $H^1(\R)$, respectively, satisfying \eqref{ineA} 
and \eqref{equA2}. It follows from the proof of Theorem \ref{main1}
that
\begin{equation}\label{Indep2}
\rho_{0,A}(\widetilde{F}) =\,\sum_{k=1}^\infty u_A(f_k,h_k).
\end{equation}
This equality shows that the right-hand side of (\ref{Indep2})
does not depend on the choice of $(f_k)_{k\in \N}$ and $(h_k)_{k\in \N}$.
The reason why we did not take (\ref{Indep2})
as a definition of $\rho_{0,A}$ is precisely 
that we did not know a priori
that $\sum_{k=1}^\infty u_A(f_k,h_k)$ was independent of the 
representation of $F$.
\end{rq1}

\subsection{Functional calculus on $\A(\C_+)$}\label{FCA}
We keep the notation from the
previous subsection.
We can extend Theorem \ref{main1} as follows.

\begin{cor}\label{main3}
There exists a unique bounded homomorphism $\rho_{A}\colon
\A(\C_+)\to B(H)$ extending
$\rho_{0,A}$. Moreover $\norm{\rho_A}\leq C^2$.
\end{cor}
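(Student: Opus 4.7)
The plan is to construct $\rho_A$ using the contractive approximate unit $(G_N)_{N\in\N}$ of $\A_0(\R)$ introduced in the proof of Proposition \ref{normequA0}, together with the ideal structure given by Proposition \ref{propAlg}. Recall that $\widetilde{G_N}(z)=N/(N+z)$, hence by Lemma \ref{lemcompatibility},
$$
\rho_{0,A}(\widetilde{G_N})=N(N+A)^{-1},
$$
which converges strongly to $I_H$ as $N\to\infty$ by the standard Hille-Yosida approximation for negative generators of bounded $C_0$-semigroups.

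For any $F\in\A(\R)$, the ideal property yields $FG_N\in\A_0(\R)$ with $\norm{FG_N}_{\A_0}\leq\norm{F}_\A\norm{G_N}_{\A_0}\leq\norm{F}_\A$, so Theorem \ref{main1} supplies the uniform bound $\norm{\rho_{0,A}(\widetilde{FG_N})}\leq C^2\norm{F}_\A$. Next I would show that $\rho_{0,A}(\widetilde{FG_N})x$ is convergent for every $x\in H$. For $x\in{\rm Dom}(A)$, choose $M_0\geq 1$ and set $y=(M_0+A)x/M_0\in H$, so that $x=M_0(M_0+A)^{-1}y=\rho_{0,A}(\widetilde{G_{M_0}})y$. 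The multiplicativity of $\rho_{0,A}$ on $\A_0(\C_+)$ then gives
$$
\rho_{0,A}(\widetilde{FG_N})x=\rho_{0,A}\bigl(\widetilde{FG_{M_0}}\bigr)\rho_{0,A}(\widetilde{G_N})y\longrightarrow\rho_{0,A}\bigl(\widetilde{FG_{M_0}}\bigr)y.
$$
Combined with the density of ${\rm Dom}(A)$ in $H$ and the uniform norm bound, this extends the convergence to every $x\in H$, allowing me to define $\rho_A(\widetilde{F})x:=\lim_N\rho_{0,A}(\widetilde{FG_N})x$.

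The estimate $\norm{\rho_A(\widetilde{F})}\leq C^2\norm{F}_\A$ passes to the limit; and for $F\in\A_0(\R)$, the identity $\rho_{0,A}(\widetilde{FG_N})x=\rho_{0,A}(\widetilde{F})\,N(N+A)^{-1}x\to\rho_{0,A}(\widetilde{F})x$ shows that $\rho_A$ extends $\rho_{0,A}$. For multiplicativity, I would compute, on vectors of the form $x=\rho_{0,A}(\widetilde{G_{M_0}})y$,
$$
\rho_A(\widetilde{F_1}\widetilde{F_2})x=\lim_N\rho_{0,A}\bigl(\widetilde{F_1F_2G_NG_{M_0}}\bigr)y=\lim_N\rho_{0,A}\bigl(\widetilde{F_1G_N}\bigr)\rho_{0,A}\bigl(\widetilde{F_2G_{M_0}}\bigr)y=\rho_A(\widetilde{F_1})\rho_A(\widetilde{F_2})x,
$$
using multiplicativity of $\rho_{0,A}$ twice and the rewriting established in the previous paragraph, and then extend to $H$ by density. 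Uniqueness follows from the same approximate-unit identity: any bounded homomorphism $\rho$ extending $\rho_{0,A}$ satisfies $\rho(\widetilde{F})\,N(N+A)^{-1}=\rho(\widetilde{FG_N})=\rho_{0,A}(\widetilde{FG_N})$, and the strong convergence $N(N+A)^{-1}\to I_H$ then forces $\rho(\widetilde{F})x=\rho_A(\widetilde{F})x$.

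The main obstacle is that $\A_0(\C_+)$ is \emph{not} norm-dense in $\A(\C_+)$, so the extension cannot be obtained by a routine continuous extension on the Banach algebra. The detour through the strong operator topology, using that $\A_0$ is an ideal in $\A$ and that $\rho_{0,A}(\widetilde{G_N})$ is a strong approximation of $I_H$, is the essential ingredient that replaces ordinary density.
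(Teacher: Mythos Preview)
Your proof is correct and follows essentially the same strategy as the paper's: both use the contractive approximate unit $(\widetilde{G_N})_{N\geq 1}$ together with the ideal property \eqref{ideal} to define $\rho_A(\widetilde{F})x$ as the strong limit $\lim_N\rho_{0,A}(\widetilde{FG_N})x$, first on ${\rm Dom}(A)$ and then on $H$ by density and the uniform bound $C^2\norm{F}_\A$. The only cosmetic difference is that the paper first introduces the (a priori unbounded) regularized operator $S_\varphi=(1+A)\rho_{0,A}(\varphi\widetilde{G_1})$ and then shows it coincides with this strong limit, whereas you go to the limit directly; the underlying computations are the same.

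One minor remark: your identity $\rho_{0,A}(\widetilde{G_N})=N(N+A)^{-1}$ is justified by \eqref{main2} in Theorem \ref{main1} (the Laplace formula for the resolvent) rather than by Lemma \ref{lemcompatibility} itself, which concerns $A+\varepsilon$.
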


\begin{proof} We follow an idea from \cite{bgt1}, using regularization.
Consider the sequence $(G_N)_{N\in\N}$ defined
in the proof of Proposition \ref{normequA0}. Then 
$$
\widetilde{G_N}(z)=\,\frac{N}{N+z}\,,\qquad z\in\C_+,\, N\geq 1.
$$

For any $\varphi\in\A(\C_+)$, we let $S_\varphi$
be the operator defined by
$$
S_\varphi=(1+A)\rho_{0,A}(\varphi\widetilde{G_1}),
$$
with domain ${\rm Dom}(S_\varphi)=\{x\in H\, :\, 
[\rho_{0,A}(\varphi \widetilde{G_1})](x)\in {\rm Dom}(A)\}$. In this definition,
we use the fact that 
$\varphi\widetilde{G_1}$ belongs to $\A_0(\C_+)$, which
follows from Proposition \ref{propAlg}. It is clear that 
$S_\varphi$ is closed.
Further ${\rm Dom}(A)\subset {\rm Dom}(S_\varphi)$, hence $S_\varphi$
is densely defined. More precisely,
if $x\in {\rm Dom}(A)$, then $x=
\rho_{A,0}(\widetilde{G_1})(1+A)(x)$
hence 
$$
[\rho_{0,A}(\varphi \widetilde{G_1})](x) = \rho_{0,A}(\varphi \widetilde{G_1}^2)](1+A)(x)
=(1+A)^{-1}\rho_{0,A}(\varphi \widetilde{G_1})](1+A)(x)
$$
belongs to ${\rm Dom}(A)$ and we have
\begin{equation}\label{DomA}
S_\varphi(x)= \rho_{0,A}(\varphi\widetilde{G_1})(1+A)(x).
\end{equation}

Since $\rho_{0,A}$ is multiplicative, we have
$\rho_{0,A}(\varphi \widetilde{G_N}\widetilde{G_1})=
\rho_{0,A}(\varphi \widetilde{G_N})(1+A)^{-1}$
for any $N\geq 1$. 
Moreover as noticed in the proof of Proposition \ref{normequA0},
$(G_N)_{N\in\N}$ is an approximate unit of $\A_0(\R)$,
hence $\varphi \widetilde{G_N}\widetilde{G_1}\to 
\varphi\widetilde{G_1}$ in $\A_0(\C_+)$, when $N\to \infty$.
We deduce, using (\ref{DomA}), that for any $x\in{\rm Dom}(A)$,
$$
S_\varphi(x) 
=\lim_N \rho_{0,A}(\varphi \widetilde{G_N}\widetilde{G_1})(1+A)(x)
= \lim_N \rho_{0,A}(\varphi \widetilde{G_N})(x).
$$
For any $N\geq 1$, 
$$
\norm{\rho_{0,A}(\varphi \widetilde{G_N})}
\leq C^2 \norm{\varphi\widetilde{G_N}}_{\A_0}
\leq C^2 \norm{\varphi}_{\A},
$$
by (\ref{ideal}). Consequently, $\norm{S_\varphi(x)}
\leq C^2 \norm{\varphi}_{\A}\norm{x}$ for any 
$x\in{\rm Dom}(A)$. This shows that
${\rm Dom}(S_\varphi)=H$ and $S_\varphi\in B(H)$.

We now define $\rho_{A}\colon
\A(\C_+)\to B(H)$ by $\rho_A(\varphi)=S_\varphi$. It is clear
from above that $\rho_{A}$ is linear and
bounded, with $\norm{\rho_A}\leq C^2$. It extends $\rho_{0,A}$
because if $F\in\A_0(\C_+)$, then we have
$\rho_{0,A}(\varphi\widetilde{G_1}) = 
\rho_{0,A}(\widetilde{G_1})\rho_{0,A}(\varphi)=(1+A)^{-1}
\rho_{0,A}(\varphi)$, hence $S_\varphi=\rho_{0,A}(\varphi)$.

Let $\varphi_1,\varphi_2\in \A(\C_+)$. For any integers $N_1,N_2\geq 1$,
we have 
$$
\rho_{0,A}\bigl(\varphi_1\varphi_2 \widetilde{G_{N_1}}\widetilde{G_{N_2}}\bigr)
= \rho_{0,A}(\varphi_1 \widetilde{G_{N_1}})
\rho_{0,A}(\varphi_2 \widetilde{G_{N_2}}),
$$
because $\rho_{0,A}$ is multiplicative. We deduce that
$\rho_{0,A}(\varphi_1\varphi_2 \widetilde{G_{N_1}})
= \rho_{0,A}(\varphi_1 \widetilde{G_{N_1}})
\rho_{A}(\varphi_2)$ for all $N_1\geq 1$, by
letting $N_2\to\infty$. Next  we obtain  
$\rho_{A}(\varphi_1\varphi_2)
= \rho_{A}(\varphi_1)
\rho_{A}(\varphi_2)$ by letting 
$N_1\to\infty$. Thus $\rho_{A}$ is multiplicative.

The uniqueness property is clear.
\end{proof}

We will show in Remark \ref{narrow} below that the functional
calculus $\rho_A$ from Corollary \ref{main3} is compatible with the 
Hille-Phillips functional calculus on $M(\R_+)$.

\subsection{Operators with a bounded $H^\infty(\C_+)$-functional
calculus}\label{H-infty}

The goal of this subsection is to explain 
the connections between our main results
(Theorem \ref{main1}, Corollary \ref{main3})
and $H^\infty$-functional calculus.

We will
assume that the reader is familiar with sectorial
operators and their $H^\infty$-functional calculus, 
for which we refer 
to \cite{haa1} or \cite[Chapter 10]{hnvw}. Using
standard notation, for any 
$\theta\in(0,\pi)$ we let $\Sigma_\theta=\{z\in\C^*\, :\,
\vert{\rm Arg}(z)\vert<\theta\}$ and 
$$
H_0^\infty(\Sigma_{\theta})
=\bigl\{\varphi 
\in H^{\infty}(\Sigma_{\theta})
\,:\,\exists s>0,\, \vert \varphi(z)\vert \lesssim \min\{
{|z|^s,\vert z\vert^{-s}}\} \text{ on } \Sigma_{\theta}\bigr\}.
$$

Let $(T_t)_{t\geq 0}$ be a bounded $C_0$-semigroup 
on some Banach space 
$X$, with generator $-A$. Recall that 
$A$ is a sectorial operator of type $\frac{\pi}{2}$.

The following lemma is probably known to specialists, 
we include a proof
for the sake of completeness. In part (i),
the operator $\varphi(A)$ is defined 
by (\ref{fofA}) whereas in part (ii),
the operator $\varphi(A)$ is defined 
by \cite[(2.5)]{haa1}. It is worth noting that
if $\varphi\in {\mathcal E}({\mathcal H}_\alpha)\cap
H_0^\infty(\Sigma_{\theta})$, then these two definitions
coincide.

\begin{lem}\label{Calcul-H}
The following assertions are equivalent.
\begin{itemize} 
\item [(i)] There exists a constant $C>0$ such that
for all $\alpha<0$ and for all $\varphi\in 
{\mathcal E}({\mathcal H}_\alpha)$,
\begin{equation}\label{Calcul-HH}
\norm{\varphi(A)}\leq C\norm{\varphi}_{H^\infty(\C_+)}.
\end{equation}
\item [(ii)] There exists a constant $C>0$ such that
for all $\theta\in\bigl(\frac{\pi}{2},\pi\bigr)$ and for all 
$\varphi\in H_0^\infty(\Sigma_{\theta})$,
$$
\norm{\varphi(A)}\leq C\norm{\varphi}_{H^\infty(\C_+)}.
$$
\item [(iii)] There exists a constant $C>0$ such that
for all $b\in L^1(\R_+)$,
\begin{equation}\label{Calcul-HHH}
\Bignorm{\int_{0}^\infty b(t)T_t\, dt}\leq 
C\norm{\widehat{b}}_\infty.
\end{equation}
\end{itemize}
\end{lem}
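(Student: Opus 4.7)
The plan is to prove $(i)\Rightarrow(iii)\Rightarrow(i)$, and separately $(i)\Leftrightarrow(ii)$. All three conditions encode a bounded ``$H^\infty(\C_+)$-type'' calculus for $A$, so the essential tools will be the Convergence Lemma \ref{CVLemma} (and its sectorial counterpart), the compatibility result Lemma \ref{lemcompatibility}, and, for $(i)\Leftrightarrow(ii)$, a common dense subclass of rational functions on which the half-plane and sectorial calculi reduce to the standard resolvent calculus.

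For $(i)\Rightarrow(iii)$, fix $b\in L^1(\R_+)$ and $\delta>0$, and set $\varphi_\delta(z):=L_b(z+\delta)$. Since $z\in\mathcal{H}_{-\delta}$ forces $z+\delta\in\C_+$, we have $\varphi_\delta\in H^\infty(\mathcal{H}_{-\delta})$ with $\norme{\varphi_\delta}_{H^\infty(\C_+)}\leq\norme{\widehat{b}}_\infty$. Regularising by $e_n(z):=n/(n+z)$ puts $\varphi_\delta e_n\in\mathcal{E}(\mathcal{H}_{-\delta})$; (i) gives $\norme{(\varphi_\delta e_n)(A)}\leq C\norme{\widehat{b}}_\infty$ uniformly in $n$, so Lemma \ref{CVLemma} yields $\varphi_\delta(A)\in B(H)$ with the same bound. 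The shift property of the half-plane calculus identifies $\varphi_\delta(A)=L_b(A+\delta)$, which by Lemma \ref{lemcompatibility} equals $\Gamma(A+\delta,b)$. Dominated convergence on $\int_0^\infty b(t)e^{-\delta t}T_tx\,dt$ as $\delta\to 0^+$ then gives $\norme{\Gamma(A,b)}\leq C\norme{\widehat{b}}_\infty$.

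For $(iii)\Rightarrow(i)$, fix $\alpha<0$ and $\varphi\in\mathcal{E}(\mathcal{H}_\alpha)$. The decay of $\varphi$ makes it integrable on each vertical line $\beta+i\R$ with $\beta\in(\alpha,0)$, and the Bromwich integral $b(t):=(2\pi)^{-1}\int e^{(\beta+is)t}\varphi(\beta+is)\,ds$ defines a function independent of $\beta$ (Cauchy's theorem), vanishing on $\R_-$ (let $\beta\to+\infty$, using the $|z|^{-1-s}$ decay), and satisfying $|b(t)|\lesssim e^{\beta t}$ for every $\beta\in(\alpha,0)$. Hence $b\in L^1(\R_+)$ with $L_b=\varphi$ on $\C_+$. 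Lemma \ref{lemcompatibility} then gives $\varphi(A+\delta)=\Gamma(A+\delta,b)$, and letting $\delta\to 0^+$, with norm convergence $\varphi(A+\delta)\to\varphi(A)$ from dominated convergence in the defining contour integral and strong convergence $\Gamma(A+\delta,b)\to\Gamma(A,b)$, yields $\varphi(A)=\Gamma(A,b)$. Then (iii) delivers $\norme{\varphi(A)}\leq C\norme{\widehat{b}}_\infty\leq C\norme{\varphi}_{H^\infty(\C_+)}$.

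For $(i)\Leftrightarrow(ii)$, the bridge is the class of rational functions $r_{\mu,m}(z)=(\mu-z)^{-m}$, ${\rm Re}(\mu)<0$, $m\geq 2$, which lie in $\mathcal{E}(\mathcal{H}_\alpha)\cap H_0^\infty(\Sigma_\theta)$ and produce $R(\mu,A)^m$ in either calculus. Given (i), approximate $\varphi\in H_0^\infty(\Sigma_\theta)$ pointwise on $\Sigma_\theta$, uniformly bounded on $\C_+$, by functions in $\mathcal{E}(\mathcal{H}_\alpha)\cap H_0^\infty(\Sigma_\theta)$; (i) bounds these by $C\norme{\varphi}_{H^\infty(\C_+)}$ and the sectorial Convergence Lemma delivers (ii). The reverse is symmetric. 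The principal obstacle is precisely this equivalence: the natural domains $\mathcal{H}_\alpha$ and $\Sigma_\theta$ are incomparable, so one must construct a genuine common approximating class and control everything in the (smaller) $H^\infty(\C_+)$-norm rather than the intrinsic $H^\infty(\mathcal{H}_\alpha)$- or $H^\infty(\Sigma_\theta)$-norms; by contrast, $(i)\Leftrightarrow(iii)$ reduces cleanly to $\delta$-shifting of $A$ together with Lemmas \ref{CVLemma} and \ref{lemcompatibility}.
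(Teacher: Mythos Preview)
Your argument for $(i)\Leftrightarrow(iii)$ is essentially correct and, for the direction $(iii)\Rightarrow(i)$, takes a genuinely different route from the paper. The paper closes the cycle as $(i)\Rightarrow(iii)\Rightarrow(ii)\Rightarrow(i)$: it obtains $(iii)\Rightarrow(ii)$ by citing \cite[Lemma 3.3.1 \& Proposition 3.3.2]{haa1}, and for $(ii)\Rightarrow(i)$ it composes $\varphi\in\mathcal{E}(\mathcal{H}_\alpha)$ with the M\"obius map $q_\varepsilon(z)=(\varepsilon+z)/(1+\varepsilon z)$, which sends $\mathcal{H}_\alpha$ into itself and some sector $\Sigma_\theta$ into $\C_+$; this places $\varphi\circ q_\varepsilon$ in $H_0^\infty(\Sigma_\theta)\oplus{\rm Span}\{1,(1+\cdot)^{-1}\}$, where (ii) applies, and the Convergence Lemma finishes as $\varepsilon\to 0$. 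Your direct Laplace-inversion argument for $(iii)\Rightarrow(i)$ bypasses both the sectorial calculus and this conformal trick, which is a pleasant simplification. One small correction in $(i)\Rightarrow(iii)$: your regularizer $e_n(z)=n/(n+z)$ decays only like $|z|^{-1}$, so $\varphi_\delta e_n$ need not lie in $\mathcal{E}(\mathcal{H}_{-\delta})$; use $e_n^2$ instead, or follow the paper and invoke the extension of the estimate \eqref{Calcul-HH} to all of $H^\infty(\mathcal{H}_\alpha)$ from \cite[Section 5]{bat-haa}.

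The genuine gap is your treatment of $(i)\Leftrightarrow(ii)$. You propose to approximate $\varphi\in H_0^\infty(\Sigma_\theta)$ pointwise by functions in $\mathcal{E}(\mathcal{H}_\alpha)\cap H_0^\infty(\Sigma_\theta)$, but you do not construct such approximants, and the obstacle you yourself flag is real: elements of $H_0^\infty(\Sigma_\theta)$ are not defined on any half-plane $\mathcal{H}_\alpha$ with $\alpha<0$ (the sector $\Sigma_\theta$ contains no such half-plane), so there is no evident way to manufacture approximants living on both domains with uniform $H^\infty(\C_+)$ control. The reverse direction $(ii)\Rightarrow(i)$, which you call ``symmetric'', is precisely where the paper deploys the non-obvious $q_\varepsilon$ composition. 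Given that you already have $(i)\Leftrightarrow(iii)$, the efficient repair is: obtain $(i)\Rightarrow(ii)$ via the cited implication $(iii)\Rightarrow(ii)$, and for $(ii)\Rightarrow(i)$ adopt the paper's M\"obius argument. A direct path from $(ii)$ back to $(iii)$ is again obstructed, since $L_b$ is in general not defined on any sector $\Sigma_\theta$ with $\theta>\pi/2$.
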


\begin{proof}
Assume (i). By the approximation argument at the beginning
of \cite[Section 5]{bat-haa}, (\ref{Calcul-HH}) holds as
well for any $\varphi\in H^\infty({\mathcal H}_\alpha)$.
Let $b\in L^1(\R_+)$. The function 
$L_b(\,\cdotp+\varepsilon)$ belongs
to $H^\infty({\mathcal H}_{-\varepsilon})$ for any $\varepsilon>0$,
hence we have
$$
\norm{L_b(A+\varepsilon)}\leq C
\norm{L_b(\,\cdotp+\varepsilon)}_{H^\infty(\C_+)}
\leq  C
\norm{L_b}_{H^\infty(\C_+)} = C\norm{\widehat{b}}_\infty.
$$
Applying Lemma \ref{lemcompatibility} and letting 
$\varepsilon\to 0$,
we obtain (\ref{Calcul-HHH}), which proves (iii).

The fact that (iii) implies (ii) follows from 
\cite[Lemma 3.3.1 $\&$ Proposition 3.3.2]{haa1},
see also \cite[Lemma 2.12]{lem2}.

Assume (ii) and let us prove (i). 
For any $\varepsilon\in (0,1)$,
consider the rational function $q_\varepsilon$ defined by 
$$
q_\varepsilon(z)
= \frac{\varepsilon +z}{1+\varepsilon z},
\qquad z\not= \frac{-1}{\varepsilon}.
$$
We may and do assume that 
$\alpha\in (-1,0)$ when proving (i). 
Fix some  $\varepsilon\in (0,1)$.
It is easy to check (left to the reader)
that
$q_\varepsilon$ maps ${\mathcal H}_\alpha$ into
itself. Moreover there exists 
$\theta\in\bigl(\frac{\pi}{2},\pi\bigr)$ such that 
$q_\varepsilon$ maps $\Sigma_{\theta}$ into
$\C_+$. 

Let $\varphi\in {\mathcal E}({\mathcal H}_\alpha)$, then
$$
\varphi_\varepsilon : = \varphi\circ q_\varepsilon\colon
{\mathcal H}_\alpha\cup \Sigma_\theta
\longrightarrow\C
$$
is a well-defined bounded holomorphic function.
Moreover we have 
\begin{equation}\label{unifo}
\norm{\varphi_\varepsilon}_{H^{\infty}({\mathcal H}_\alpha)}
\leq \norm{\varphi}_{H^{\infty}({\mathcal H}_\alpha)}.
\end{equation}
By \cite[Lemma 2.2.3]{haa1},
$\varphi_\varepsilon$ belongs 
to  
$H_0^\infty(\Sigma_{\theta})\oplus
{\rm Span}\{1,(1+\,\cdotp)^{-1}\}$. Further
the definition of $\varphi_\varepsilon(A)$ provided
by the functional calculus of sectorial operators
coincides with 
the definition of $\varphi_\varepsilon(A)$ provided by the 
half-plane functional calculus. Hence for some constant
$C'>0$ not depending on $\varepsilon$, we have 
$$
\norm{\varphi_\varepsilon(A)}\leq C'\norm{\varphi_\varepsilon}_{H^\infty(\C_+)}
\leq C'\norm{\varphi}_{H^\infty(\C_+)},
$$
by (ii). Since $\varphi_\varepsilon\to \varphi$ pointwise on ${\mathcal H}_\alpha$,
it now follows from (\ref{unifo}) and the Convergence Lemma \ref{CVLemma} 
that $\norm{\varphi(A)}\leq C'\norm{\varphi}_{H^\infty(\C_+)}$,
which proves (i).
\end{proof}

We say that 
$A$ admits a bounded $H^\infty(\C_+)$-functional
calculus if one of (equivalently, all of)
the properties of Lemma \ref{Calcul-H} hold true.
If $A$ is sectorial of type $<\frac{\pi}{2}$,
the latter is equivalent to $A$ having 
a bounded $H^\infty$-functional
calculus of angle $\frac{\pi}{2}$ is the usual sense.
The main feature of the ``bounded
$H^\infty(\C_+)$-functional
calculus" property considered here is that it 
may apply to the case when the sectorial
type of $A$ is not  $<\frac{\pi}{2}$.

We now come back to the specific case when $X=H$ is a Hilbert space.
Here are a few known facts in this setting:
\begin{itemize}
\item [(f1)] If $(T_t)_{t\geq 0}$ is a contractive semigroup
(that is, $\norm{T_t}\leq 1$ for all $t\geq 0$), then 
$A$ admits a bounded $H^\infty(\C_+)$-functional
calculus. See \cite[Section 7.1.3]{haa1} for a proof and
more on this theme.
\item [(f2)]  We say that
$(T_t)_{t\geq 0}$ is similar to a contractive semigroup if there
exists an invertible operator $S\in B(H)$ such that 
$(ST_tS^{-1})_{t\geq 0}$ is  a contractive semigroup.
A straightforward application of the previous result is that
in this case, $A$ admits a bounded $H^\infty(\C_+)$-functional
calculus.
\item [(f3)] If $A$ is sectorial of type $<\frac{\pi}{2}$, then
$A$ admits a bounded $H^\infty(\C_+)$-functional
calculus (if and) only if $(T_t)_{t\geq 0}$ is similar to a 
contractive semigroup. This goes back to \cite[Section 4]{lemsim}.
\item [(f4)] There exist sectorial operators of type $<\frac{\pi}{2}$
which do not admit a bounded $H^\infty(\C_+)$-functional
calculus, by \cite{mcya, baicle} 
(see also \cite[Section 7.3.4]{haa1}).
\item [(f5)] There exists a bounded $C_0$-semigroup
$(T_t)_{t\geq 0}$ such that $A$ admits a bounded $H^\infty(\C_+)$-functional
calculus but $(T_t)_{t\geq 0}$ is not 
similar to a contractive semigroup. This follows from
\cite[Proposition 4.8]{lemsim} and its proof.
\end{itemize}

\smallskip
We now establish analogues of Theorem \ref{main1}
and Corollary \ref{main3} in the case when 
$A$ admits a bounded $H^\infty(\C_+)$-functional
calculus. 
Just as we did in Subsection \ref{HP-Versions}, we set
$$
{\mathcal C}_0(\C_+)=\bigl\{\widetilde{F}\, :\, 
F\in C_0(\R)\cap H^\infty(\R)\bigr\}
\qquad\hbox{and}\qquad
{\mathcal C}(\C_+)=\bigl\{\widetilde{F}\, :\, 
F\in C_b(\R)\cap H^\infty(\R)\bigr\}.
$$
Since  $\{\widehat{b}(-\,\cdotp)\, :\, b\in L^1(\R_+)\}$
is dense in $C_0(\R)\cap H^\infty(\R)$, by Remark \ref{densities},
the following
is straightforward.

\begin{prop}\label{H-infini2}
Assume that $A$ admits a bounded $H^\infty(\C_+)$-functional
calculus on $H$. Then 
there exists a unique bounded homomorphism $\nu_{0,A}\colon
{\mathcal C}_0(\C_+) \to B(H)$ such that
\begin{equation}\label{main4}
\nu_{0,A}(L_b)=\int_0^\infty b(t)T_t\,dt
\end{equation}
for all $b\in L^1(\R_+)$. 
\end{prop}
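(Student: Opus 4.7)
The plan is to construct $\nu_{0,A}$ by first defining it on the dense subalgebra $\{L_b : b \in L^1(\R_+)\}$ of $\mathcal{C}_0(\C_+)$ via the Hille--Phillips calculus, and then extending by continuity with respect to the sup norm inherited from $H^\infty(\C_+)$. Concretely, for $b \in L^1(\R_+)$, the function $F = \widehat{b}(-\,\cdotp)$ belongs to $C_0(\R) \cap H^\infty(\R)$ (by the Riemann--Lebesgue lemma and the fact that $b$ is supported on $\R_+$), and $L_b = \widetilde{F}$ lies in $\mathcal{C}_0(\C_+)$ with $\norme{L_b}_\infty = \normeinf{\widehat{b}}$ by standard Poisson integral theory for bounded boundary data. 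Since the Laplace transform is injective on $L^1(\R_+)$, the assignment $L_b \longmapsto \Gamma(A,b)$ is unambiguous.

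The key estimate is supplied directly by the hypothesis: since $A$ admits a bounded $H^\infty(\C_+)$-functional calculus, Lemma \ref{Calcul-H} (iii) yields a constant $C > 0$ such that
\[
\bignorm{\Gamma(A,b)} \leq C\normeinf{\widehat{b}} = C\norme{L_b}_\infty, \qquad b \in L^1(\R_+).
\]
Combined with Remark \ref{densities}, which asserts that $\{\widehat{b}(-\,\cdotp) : b \in L^1(\R_+)\}$ is dense in $C_0(\R) \cap H^\infty(\R)$, and therefore $\{L_b : b \in L^1(\R_+)\}$ is dense in $\mathcal{C}_0(\C_+)$ for the sup norm, the bounded linear map $L_b \mapsto \Gamma(A,b)$ extends uniquely to a bounded linear map $\nu_{0,A} \colon \mathcal{C}_0(\C_+) \to B(H)$ satisfying (\ref{main4}) by construction.

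Multiplicativity and uniqueness follow from standard density arguments. The Hille--Phillips calculus is multiplicative, so $L_{b_1 \star b_2} = L_{b_1} L_{b_2}$ gives $\nu_{0,A}(L_{b_1} L_{b_2}) = \Gamma(A, b_1 \star b_2) = \Gamma(A,b_1)\Gamma(A,b_2) = \nu_{0,A}(L_{b_1})\nu_{0,A}(L_{b_2})$ on the dense subalgebra, and this extends to all of $\mathcal{C}_0(\C_+)$ by joint continuity of multiplication in $B(H)$. Uniqueness is immediate from the density of $\{L_b\}$ in $\mathcal{C}_0(\C_+)$. No step presents a genuine obstacle: the entire content is to assemble Lemma \ref{Calcul-H} (iii) with Remark \ref{densities}, which is why the paper labels the statement ``straightforward.''
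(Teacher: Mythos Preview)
Your proof is correct and follows exactly the approach the paper intends: combine the estimate from Lemma~\ref{Calcul-H}~(iii) with the density statement of Remark~\ref{densities}, then extend by continuity and check multiplicativity via the Hille--Phillips calculus. The paper does not spell out a proof beyond calling the result ``straightforward'' after citing these two ingredients, and you have filled in precisely the details that justify that label.
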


Now arguing as in the proof of Corollary \ref{main3},
we deduce the following.

\begin{cor}\label{H-infini3}
Assume that $A$ admits a bounded $H^\infty(\C_+)$-functional
calculus on $H$. Then 
there exists a unique bounded homomorphism $\nu_{A}\colon
{\mathcal C}(\C_+) \to B(H)$ such that
(\ref{main4}) holds true for all
$b\in L^1(\R_+)$. 
\end{cor}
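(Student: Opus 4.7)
The plan is to follow the regularization scheme used in the proof of Corollary \ref{main3}, with $\nu_{0,A}$ in place of $\rho_{0,A}$ and the sup-norm in place of $\norme{\,\cdotp}_{\A}$, $\norme{\,\cdotp}_{\A_0}$. Every ingredient required by that argument has a straightforward analogue in the present setting.

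First I would observe that ${\mathcal C}_0(\C_+)$ is an ideal of ${\mathcal C}(\C_+)$: indeed $C_0(\R)\cap H^{\infty}(\R)$ is an ideal of $C_b(\R)\cap H^{\infty}(\R)$ with $\norme{FG}_\infty \leq \norme{F}_\infty\norme{G}_\infty$, and the Poisson-extension map $F\mapsto\widetilde{F}$ transports this to the half-plane. This ideal estimate plays the role that inequality (\ref{ideal}) played in the proof of Corollary \ref{main3}. Next, recalling that $\widetilde{G_N}(z)=N/(N+z)$, each $\widetilde{G_N}$ belongs to ${\mathcal C}_0(\C_+)$, satisfies $\norme{\widetilde{G_N}}_\infty=1$, and converges to $1$ pointwise (and locally uniformly) on $\C_+$. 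For any $\psi\in{\mathcal C}_0(\C_+)$ we have $\psi\widetilde{G_N}\to\psi$ uniformly, since $\psi$ vanishes at infinity; thus $(\widetilde{G_N})_{N\geq 1}$ is a contractive approximate unit of ${\mathcal C}_0(\C_+)$.

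For $\varphi\in{\mathcal C}(\C_+)$, I would set
$$
S_\varphi = (1+A)\,\nu_{0,A}\bigl(\varphi\widetilde{G_1}\bigr),
$$
with domain ${\rm Dom}(S_\varphi)=\bigl\{x\in H : [\nu_{0,A}(\varphi\widetilde{G_1})](x)\in{\rm Dom}(A)\bigr\}$. This is well-defined since $\varphi\widetilde{G_1}\in{\mathcal C}_0(\C_+)$ by the ideal property, and $S_\varphi$ is closed. Using that $\nu_{0,A}(\widetilde{G_1})=(1+A)^{-1}$ (which follows from (\ref{main4}) applied to $b_1(t)=e^{-t}$) together with the multiplicativity of $\nu_{0,A}$, one finds $\nu_{0,A}(\varphi\widetilde{G_N}\widetilde{G_1})=\nu_{0,A}(\varphi\widetilde{G_N})(1+A)^{-1}$, while $\varphi\widetilde{G_N}\widetilde{G_1}\to\varphi\widetilde{G_1}$ in ${\mathcal C}_0(\C_+)$ by the previous paragraph. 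Hence for $x\in{\rm Dom}(A)$,
$$
S_\varphi(x) = \nu_{0,A}(\varphi\widetilde{G_1})(1+A)(x) = \lim_{N\to\infty}\nu_{0,A}(\varphi\widetilde{G_N})(x).
$$
Since $\norm{\nu_{0,A}(\varphi\widetilde{G_N})}\leq \norm{\nu_{0,A}}\norme{\varphi\widetilde{G_N}}_\infty\leq \norm{\nu_{0,A}}\norme{\varphi}_\infty$ for all $N$, we obtain the uniform bound $\norm{S_\varphi(x)}\leq \norm{\nu_{0,A}}\norme{\varphi}_\infty\norm{x}$ on ${\rm Dom}(A)$; density of ${\rm Dom}(A)$ combined with the closedness of $S_\varphi$ then forces ${\rm Dom}(S_\varphi)=H$ and propagates the estimate to all of $H$. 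This strong-limit uniform boundedness step is the principal technical point of the argument.

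Finally, I would define $\nu_A(\varphi)=S_\varphi$; linearity is clear and the bound $\norm{\nu_A}\leq\norm{\nu_{0,A}}$ is already established. That $\nu_A$ extends $\nu_{0,A}$ is verified exactly as in Corollary \ref{main3}, since for $\varphi\in{\mathcal C}_0(\C_+)$ multiplicativity of $\nu_{0,A}$ gives $\nu_{0,A}(\varphi\widetilde{G_1})=\nu_{0,A}(\varphi)(1+A)^{-1}$, whence $S_\varphi=\nu_{0,A}(\varphi)$. Multiplicativity of $\nu_A$ is obtained by the same $(1+A)$-regularization trick. For uniqueness, any bounded homomorphism $\tilde{\nu}_A\colon{\mathcal C}(\C_+)\to B(H)$ satisfying (\ref{main4}) restricts to a bounded homomorphism on ${\mathcal C}_0(\C_+)$ that agrees with $\nu_{0,A}$ by Proposition \ref{H-infini2}; then the identity $\tilde{\nu}_A(\varphi)(1+A)^{-1}=\tilde{\nu}_A(\varphi\widetilde{G_1})=\nu_A(\varphi)(1+A)^{-1}$ combined with the dense range of $(1+A)^{-1}$ yields $\tilde{\nu}_A=\nu_A$ on all of ${\mathcal C}(\C_+)$.
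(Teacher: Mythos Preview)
Your proposal is correct and follows exactly the approach the paper indicates: the paper's ``proof'' of Corollary~\ref{H-infini3} consists of the single sentence ``arguing as in the proof of Corollary~\ref{main3}, we deduce the following,'' and you have carried out precisely that adaptation, replacing the $\A_0/\A$-norms by sup-norms and verifying that each ingredient (the ideal property, the contractive approximate unit $(\widetilde{G_N})_N$, the $(1+A)$-regularization) transfers to the ${\mathcal C}_0(\C_+)/{\mathcal C}(\C_+)$ setting.
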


Of course when the above corollary applies, $\nu_{A}$
is an extension of the mapping $\rho_A$ from
Corollary \ref{main3}. Thus our main results
(Theorem \ref{main1}, Corollary \ref{main3})
should be regarded as a way to obtain a
``good" functional calculus for negative
generators of bounded $C_0$-semigroups 
which do not admit a bounded $H^\infty(\C_+)$-functional
calculus.

\subsection{Note added in May 2022}\label{Added}
A first version of this paper has circulated since  the beginning of 2021. 
A few months later, together with Safoura Zadeh, we proved in \cite[Section 4]{ALZ}
that the inclusion (\ref{MH-inclusion}) is actually an equality. Equivalently (see 
Theorem \ref{DualA0} above), we have 
$$
{\mathcal A}_0(\R)^*\simeq \MH.
$$
The paper \cite{ALZ} also contains a new proof of Theorem \ref{main1} based on 
a description of the so-called $S^1$-bounded Fourier multipliers on
$H^1(\R)$ and on a tensor product estimate of independent interest, inspired by an 
old result of White \cite[Section 5]{white}.

\section{Comparison with the Besov functional calculus}\label{Besov}
In this section we compare the functional calculus constructed
in Section \ref{CF} (Theorem \ref{main1} and Corollary \ref{main3})
with the Besov functional calculus from \cite[Subsection 5.5]{haa3} and 
\cite{bgt1}.
We start with some background on the analytic homogeneous Besov space
used in the latter paper. We refer to \cite[Section 6]{bgt1} for further details.

Let $\psi\in \SR$ such that ${\rm Supp}(\psi)\subset \bigl[\frac12,2\bigr]$,
$\psi(t)\geq 0$ for all $t\in\R$, and
$\psi(t)+\psi(\frac{t}{2}\bigr)=1$ for all $t\in 
\bigl[1,2\bigr]$. For any $k\in\Z$, we let 
$\psi_k\in\SR$ be defined by $\psi_k(t)=\psi(2^{-k}t)$, $t\in\R$.
A key property of the sequence $(\psi_k)_{k\in\tiny\Z}$
is that for any $k_0\in\Z$, we have
\begin{equation}\label{sum1}
\forall\, t\in [2^{k_0},2^{k_0+1}):\qquad
\psi_{k_0}(t)+\psi_{k_0+1}(t)=1\quad\hbox{and}\quad 
\psi_k(t)=0\ \hbox{if}\ k\notin\{k_0,k_0+1\}.
\end{equation}
Next  define $\phi_k={\mathcal F}^{-1}(\psi_k)$. It is plain that 
for any $k\in\Z$,
\begin{equation}\label{phi-k}
\phi_k\in H^1(\R)
\qquad\hbox{and}\qquad \norm{\phi_k}_1 = \norm{\phi_0}_1.
\end{equation} 
It follows that for any $F\in BUC(\R)$ and any $k\in\Z$, $F\star \phi_k$
belongs to $BUC(\R)\cap H^\infty(\R)$. We define a Besov
space $\B_0(\R)$ by
$$
\B_0(\R)=\biggl\{F\in BUC(\R)\, :\,
\sum_{k\in\tiny{\Z}}\norm{F\star \phi_k}_\infty\,<\infty
\ \hbox{ and }\ F=\sum_{k\in\tiny{\Z}}
F\star \phi_k \biggr\}.
$$
This is a Banach space for the norm
$$
\norm{F}_{\B_0} = \sum_{k\in\tiny{\Z}}\norm{F\star \phi_k}_\infty.
$$
This space is denoted by $\B_{dyad}$ in  \cite[Section 6]{bgt1}.

Next we set $\B_{00}(\R)= \B(\R)\cap C_0(\R)$, equipped with the norm 
of $\B_0(\R)$. Then $\B_{00}(\R)$ is a closed subspace
of $\B(\R)$ and we clearly have
$$
\B_{00}(\R)\subset C_0(\R)\cap H^\infty(\R)
\qquad\hbox{and}\qquad 
\B_0(\R)\subset BUC(\R)\cap H^\infty(\R).
$$

We wish to underline that the above definitions
of $\B_0(\R)$ and $\B_{00}(\R)$ 
do not depend on the choice 
of the function $\psi$. More precisely if $\psi^{(1)},\psi^{(2)}$
are two functions as above and if we let 
$\B_0^{\psi^{(1)}}(\R)$ and $\B_0^{\psi^{(2)}}(\R)$
denote the associated spaces, then 
$\B_0^{\psi^{(1)}}(\R)$ and $\B_0^{\psi^{(2)}}(\R)$
coincide as vector spaces and the 
norms $\norm{\,\cdotp}_{\B_0^{\psi^{(1)}}}$ and 
$\norm{\,\cdotp}_{\B_0^{\psi^{(2)}}}$ are equivalent.
We refer to \cite[Section 6]{bgt1} and the references therein for 
these properties.

Similarly to Subsection \ref{HP-Versions}, we 
introduce half-plane versions of $\B_0(\R)$ and $\B_{00}(\R)$,
by setting
$$
\B_{00}(\C_+) = \bigl\{\widetilde{F}\, :\,
F\in \B_0(\R)\bigr\}
\qquad\hbox{and}\qquad 
\B_0(\C_+) = \bigl\{\widetilde{F}\, :\,
F\in \B(\R)\bigr\}.
$$
According to \cite[Proposition 6.2]{bgt1}, the space
$\B_0(\C_+)\subset H^\infty(\C_+)$ coincides 
with the space $\B_0$ considered by Batty-Gomilko-Tomilov
in \cite[Subsection 2.2]{bgt1}. By \cite[Subsection 2.4]{bgt1}, we have
\begin{equation}\label{LM}
\bigl\{L_b\, :\, b\in L^1(\R_+)\bigr\}\,\subset\, 
\B_{00}(\C_+).
\end{equation}

Moreover Batty-Gomilko-Tomilov established the following remarkable functional
calculus result.

\begin{thm}\label{BGT} (\cite[Theorem 4.4]{bgt1}, 
\cite[Theorem 6.1]{bgt2})
Let $X$ be a Banach space, let $(T_t)_{t\geq 0}$
be a bounded $C_0$-semigroup on $X$ and let $-A$ 
denote its generator.
The following are equivalent.
\begin{itemize}
\item [(i)] There exists a constant $K>0$ such that
$$
\int_{-\infty}^{\infty}\bigl\vert
\langle R(\beta +it, A)^2(x),x^*\rangle\bigr\vert\, dt
\,\leq \,\frac{-K}{\beta}\,\norm{x}\norm{x^*}
$$
for all $\beta<0$, all $x\in X$ and all $x^*\in X^*$.
\item [(ii)] There exists a 
bounded homomorphism $\gamma_A\colon \B_0(\C_+)\to B(X)$
such that
$$
\gamma_A(L_b) = \int_{0}^{\infty} b(t) T_t\, dt,\qquad b\in L^1(\R_+).
$$
\end{itemize}
In this case, $\gamma_A$ is unique.
\end{thm}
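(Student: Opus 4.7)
The plan is to prove both implications through a reproducing formula expressing $F(A)$ as an integral of data attached to $F$ against the squared resolvent $R(\beta+it,A)^2$ along vertical lines in the open left half-plane, combined with the Littlewood--Paley decomposition built into $\B(\C_+)$.

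For (ii)$\Rightarrow$(i), I would test $\gamma_A$ against the reproducing functions themselves. For each $\lambda$ with ${\rm Re}(\lambda)<0$, the function $e_\lambda(z):=(z-\lambda)^{-2}$ is the Laplace transform on $\C_+$ of $s\mapsto se^{\lambda s}\in L^1(\R_+)$, hence lies in $\B(\C_+)$ by (\ref{LM}), and the defining property of $\gamma_A$ forces $\gamma_A(e_\lambda)=R(\lambda,A)^2$. Given $\beta<0$ and $g\in L^\infty(\R)$ with $\|g\|_\infty\le 1$, set
\[
G_{\beta,g}(z)\,=\,\int_{-\infty}^{\infty}g(t)(z-(\beta+it))^{-2}\,dt, \qquad z\in\C_+.
\]
A direct estimate of the Littlewood--Paley pieces of $G_{\beta,g}$ (the kernel acts like a Poisson operator at depth $|\beta|$) gives $\|G_{\beta,g}\|_{\B}\le K/|\beta|$ uniformly in $g$. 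Applying $\gamma_A$ and using its continuity together with (\ref{LM}) identifies $\gamma_A(G_{\beta,g})$ with $\int g(t)R(\beta+it,A)^2\,dt$; pairing with $x\otimes x^*$ and taking the supremum over $g$ in the unit ball of $L^\infty(\R)$ produces~(i), with the constant depending only on $\|\gamma_A\|$.

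For (i)$\Rightarrow$(ii), I would construct $\gamma_A$ dyadically. Given $F\in\B(\R)$, decompose $F=\sum_{k\in\Z}F\star\phi_k$. Each extension $(F\star\phi_k)^\sim$ has Fourier spectrum in $[2^{k-1},2^{k+1}]$, so by Paley--Wiener it extends holomorphically and boundedly to $\{{\rm Re}(z)>-\delta 2^k\}$ for some fixed $\delta\in(0,1)$. A contour representation on the line ${\rm Re}(z)=-\delta 2^k$, rewritten via $R(z,A)^2=-\partial_z R(z,A)$, yields an identity
\[
(F\star\phi_k)^\sim(A)\,=\,\frac{1}{2\pi i}\int_{-\infty}^{\infty}H_k(t)\,R(-\delta 2^k+it,A)^2\,dt,
\]
where the kernel $H_k$ depends linearly on $F\star\phi_k$ and satisfies $\|H_k\|_\infty\lesssim 2^k\|F\star\phi_k\|_\infty$ by a Bernstein estimate. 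Applying hypothesis (i) with $\beta=-\delta 2^k$ and pairing with $x,x^*$ gives $\|(F\star\phi_k)^\sim(A)\|\lesssim\|F\star\phi_k\|_\infty$ with implicit constant independent of $k$. The series $\sum_k (F\star\phi_k)^\sim(A)$ then converges absolutely in $B(X)$, and its sum serves as $\gamma_A(\widetilde F)$, with $\|\gamma_A(\widetilde F)\|\lesssim\|F\|_{\B}$. Compatibility with Hille--Phillips on the dense subset (\ref{LM}) is checked using Lemma~\ref{lemcompatibility} on each dyadic piece; multiplicativity and uniqueness then propagate by continuity from Laplace transforms.

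The main obstacle is the calibration inside the contour formula of the second step: one must arrange the line of integration and the choice of $H_k$ so that the $1/|\beta|$ decay provided by hypothesis~(i) precisely absorbs the $|\beta|=\delta 2^k$ growth of $\|H_k\|_\infty$, producing a $k$-uniform norm bound with no geometric factor. Independence of the resulting $\gamma_A$ from the specific Littlewood--Paley partition $\psi$, which is needed for $\gamma_A$ to be well-defined as a map out of the algebra $\B(\C_+)$, is a standard consequence of the equivalence of the Besov norms associated with different admissible partitions and is invoked at the end.
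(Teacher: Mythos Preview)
The paper does not contain a proof of Theorem~\ref{BGT}: the result is quoted from \cite[Theorem 4.4]{bgt1} and \cite[Theorem 6.1]{bgt2} and is used as a black box (see the discussion immediately following the statement and its applications in Section~\ref{Besov}). There is therefore no ``paper's own proof'' to compare your proposal against.

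That said, your sketch is broadly in the spirit of the arguments in \cite{bgt1,bgt2}. For (i)$\Rightarrow$(ii), the original approach does proceed through a reproducing formula of the type you describe, though the authors organise it via the integral representation of Besov functions by $\varphi'(\alpha+it)$ along horizontal lines rather than through a dyadic contour shift; your Littlewood--Paley version with lines ${\rm Re}(z)=-\delta 2^k$ is a legitimate variant, and you have correctly identified the key cancellation (the $2^k$ from the Bernstein bound on $H_k$ against the $1/(\delta 2^k)$ from hypothesis~(i)). For (ii)$\Rightarrow$(i), your idea of testing $\gamma_A$ on $G_{\beta,g}=\int g(t)\,e_{\beta+it}\,dt$ is the natural one; the estimate $\|G_{\beta,g}\|_{\B}\lesssim 1/|\beta|$ that you invoke is correct but is not as immediate as you suggest, since the pointwise bound $|G_{\beta,g}(z)|\le\pi/|\beta|$ alone does not control the Besov norm. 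One needs to exploit that $G_{\beta,g}$ is, up to a bounded multiplier, a Poisson-type smoothing at scale $|\beta|$, so that $\|G_{\beta,g}\star\phi_k\|_\infty$ decays geometrically away from $2^k\sim 1/|\beta|$; this is routine but should be stated rather than asserted.
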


Condition (i) in Theorem \ref{BGT} goes back at least 
to \cite{gom} and \cite{shifeng}. In fact, condition (i)
can be defined for any closed and densely defined
operator $A$ satisfying (\ref{HP}) for
$\omega=0$. Then it follows from
\cite{gom,shifeng} that (i) actually implies
that $-A$ generates a bounded $C_0$-semigroup on $X$.
(See also \cite[Theorem 6.4]{bat-haa}.)
Conversely, if $X=H$ is a Hilbert space,
it is proved in \cite{gom,shifeng} that if
$-A$ generates a bounded $C_0$-semigroup, 
then $A$ satisfies (i). (The assumption that 
$X=H$ is a Hilbert space is crucial here, see
the beginning of Section \ref{Banach} for more on this.)

Thus if $(T_t)_{t\geq 0}$
is a bounded $C_0$-semigroup with generator $-A$
on Hilbert space, then the property (ii) in
Theorem \ref{BGT} holds true.
It is therefore natural to compare 
Corollary \ref{main3} with that property.
This is the aim of the rest of this section.

\begin{prop}\label{embeddingBesovA}
We have
\[
\B_0(\C_+)\subset \A(\C_+) 
\qquad\hbox{and}\qquad 	
\B_{00}(\C_+)\subset  \A_0(\C_+).
\]
Moreover there exists a constant $K>0$ such that
$\norm{\varphi}_{\A}\leq K\norm{\varphi}_{\B_0}$
for any $\varphi\in \B_0(\C_+)$.
\end{prop}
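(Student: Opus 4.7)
The plan is to combine the Littlewood-Paley decomposition that defines $\B(\R)$ with the insertion of a well-chosen ``redundant'' convolution factor in order to produce an $\A$-decomposition. Given $F \in \B(\R)$, by definition $F = \sum_{k \in \tiny\Z} F \star \phi_k$ with $\sum_{k \in \tiny\Z} \norm{F\star \phi_k}_\infty = \norm{F}_{\B}$. A naive attempt would be to take $f_k = F$ and $h_k = \phi_k$, but since $\norm{\phi_k}_1 = \norm{\phi_0}_1$ is a fixed nonzero constant, the resulting sum $\sum_k \norm{F}_\infty \norm{\phi_0}_1$ diverges. So one needs to transfer the decay in $k$ from the $\norm{\,\cdotp}_\infty$-side over to the $\norm{\,\cdotp}_1$-side.

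First I would introduce an auxiliary sequence $(\tilde\phi_k)_{k \in \tiny\Z}$ in $H^1(\R)$ such that $\phi_k \star \tilde\phi_k = \phi_k$ for each $k \in \Z$ and $\sup_k \norm{\tilde\phi_k}_1 < \infty$. For this, fix $\tilde\psi \in \SR$ with $\tilde\psi \equiv 1$ on a neighborhood of ${\rm Supp}(\psi) \subset \bigl[\frac12, 2\bigr]$ and ${\rm Supp}(\tilde\psi) \subset \bigl[\frac14, 4\bigr]$, and set $\tilde\psi_k(t) = \tilde\psi(2^{-k}t)$ and $\tilde\phi_k = {\mathcal F}^{-1}(\tilde\psi_k)$. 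Then $\tilde\psi_k \psi_k = \psi_k$, which via the Fourier transform gives $\phi_k \star \tilde\phi_k = \phi_k$. Since ${\rm Supp}(\tilde\psi_k) \subset [2^{k-2}, 2^{k+2}] \subset \R_+^*$ and $\tilde\phi_k \in \SR$, we have $\tilde\phi_k \in H^1(\R)$; moreover the same scaling argument as for $\phi_k$ yields $\norm{\tilde\phi_k}_1 = \norm{\tilde\phi_0}_1 =: K$ for all $k$.

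Next I would write, for each $k \in \Z$, $F \star \phi_k = (F \star \phi_k) \star \tilde\phi_k$, set $f_k := F \star \phi_k$ and $h_k := \tilde\phi_k$, and note that $f_k \in BUC(\R)$ as a convolution of $F \in BUC(\R)$ with $\phi_k \in L^1(\R)$. Summing over $k$ (after any convenient reindexing on $\N$) gives the $\A(\R)$-representation $F = \sum_k f_k \star h_k$ with
\[
\sum_k \normeinf{f_k}\norme{h_k}_1 = K \sum_{k \in \tiny\Z} \norme{F \star \phi_k}_\infty = K \norme{F}_{\B}.
\]
This proves $F \in \A(\R)$ and $\norme{F}_{\A} \leq K \norme{F}_{\B}$, which after applying $\widetilde{\,\cdotp}$ yields $\B(\C_+) \subset \A(\C_+)$ with the stated norm estimate. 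For the $\B_0$/$\A_0$ inclusion, I would simply observe that when additionally $F \in C_0(\R)$, each $f_k = F \star \phi_k$ belongs to $C_0(\R)$ (convolution of a $C_0$-function with an $L^1$-function), so the very same decomposition exhibits $F$ as an element of $\A_0(\R)$.

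There is no genuine obstacle: the only point requiring any thought is the construction of the sequence $(\tilde\phi_k)_k$, which is a standard resolution-of-identity trick on the Fourier side. Its role is to absorb the $\norm{\phi_0}_1$-factor uniformly in $k$, so that the sup-norm summability built into the Besov norm can be matched step by step with an $H^1$-partner of bounded $L^1$-norm. Verifying $\tilde\phi_k \in H^1(\R)$ reduces to the containment ${\rm Supp}(\tilde\psi_k) \subset \R_+^*$, which is built in by the choice of $\tilde\psi$.
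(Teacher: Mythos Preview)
Your proof is correct and follows essentially the same approach as the paper: write $\phi_k = \phi_k \star (\text{something in } H^1 \text{ with } L^1\text{-norm bounded uniformly in } k)$, then set $f_k = F\star\phi_k$ and $h_k$ equal to that something. The only cosmetic difference is that the paper avoids introducing a new auxiliary $\tilde\psi$ by exploiting the partition-of-unity identity $\psi_k = \psi_k(\psi_{k-1}+\psi_k+\psi_{k+1})$ already built into the Littlewood--Paley family, so it takes $h_k = \phi_{k-1}+\phi_k+\phi_{k+1}$ and obtains the explicit constant $K = 3\norm{\phi_0}_1$.
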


\begin{proof}
It follows from (\ref{sum1}) that 
\begin{equation}\label{sum2}
\psi_k=\psi_k(\psi_{k-1}+\psi_k+\psi_{k+1}),\qquad k\in\Z.
\end{equation}
Consequently,
$$
\phi_k=\phi_k\star\bigl(\phi_{k-1}+\phi_{k}+\phi_{k+1}\bigr), \qquad k\in\Z.
$$
Let $F\in \B(\R)$. Applying the above identity, we have
$$
F = \sum_{k\in\tiny\Z} F\star \phi_k \star
\bigl(\phi_{k-1}+\phi_{k}+\phi_{k+1}\bigr).
$$
Appealing to (\ref{phi-k}), we observe that
$F\star \phi_k\in BUC(\R)$ and  
$\phi_{k-1}+\phi_{k}+\phi_{k+1}\in H^1(\R)$ for each
$k\in\Z$, and that 
$$
\sum_{k\in\tiny\Z} \norm{F\star \phi_k}_\infty
\norm{\phi_{k-1}+\phi_{k}+\phi_{k+1}}_1\,\leq 3\norm{\phi_0}_1\norm{F}_{\B_0}.
$$
This shows that $F\in\A(\R)$, with 
$$
\norm{F}_\A\leq 3\norm{\phi_0}_1\norm{F}_{\B_0}.
$$
This yields $\B(\C_+)\subset  \A(\C_+)$.
The above argument also shows 
that $\B_0(\C_+)\subset  \A_0(\C_+)$.
\end{proof}

Let $H$ be a Hilbert space and let $A$ be the negative generator
of a bounded $C_0$-semigroup on $H$. 
We already noticed that $A$ satisfies property 
(ii) in Theorem \ref{BGT}. According to
Proposition \ref{embeddingBesovA} and (\ref{main2}), 
the functional calculus $\rho_A\colon\A(\C_+)\to B(H)$
from Corollary \ref{main3} extends the 
functional calculus $\gamma_A\colon\B(\C_+)\to B(H)$.

It turns out that the extension from $\gamma_A$ to
$\rho_A$ is an actual improvement, because of the
following result.

\begin{thm}\label{AdifferentB0}
We have
\[
\B_0(\C_+)\not= \A(\C_+) 
\qquad\hbox{and}\qquad 	
\B_{00}(\C_+)\not=  \A_0(\C_+).
\]
\end{thm}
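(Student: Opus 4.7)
The plan is to reduce both non-equalities to the single statement $\A_0(\C_+) \not= \B_0(\C_+)$. Suppose $\widetilde{F}\in\A_0(\C_+)\setminus \B_0(\C_+)$. Then automatically $\widetilde{F}\in\A(\C_+)$; and if one also had $\widetilde{F}\in\B(\C_+)$ then the boundary function $F\in\B(\R)$ combined with $F\in\A_0(\R)\subset C_0(\R)$ would give $F\in\B(\R)\cap C_0(\R)=\B_0(\R)$, contradicting $\widetilde{F}\notin\B_0(\C_+)$. So one witness function settles both claims.

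\textbf{Open mapping reduction.} By Proposition \ref{embeddingBesovA}, the inclusion $\B_0(\C_+)\hookrightarrow\A_0(\C_+)$ is a continuous injection between Banach spaces. If it were surjective, the open mapping theorem would force the norms $\norme{\cdot}_{\B}$ and $\norme{\cdot}_{\A_0}$ to be equivalent on $\B_0(\C_+)$. It therefore suffices to construct a sequence $(F_N)_N$ in $\B_0(\C_+)$ satisfying $\sup_N\norme{F_N}_{\A_0}<\infty$ and $\norme{F_N}_{\B}\to\infty$.

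\textbf{Construction.} The natural candidates are superpositions of narrow-band Laplace transforms centered at lacunary frequencies. Fix a bump $\chi\in C_c^{\infty}(\R_+)$, set $b_k(t)=\chi(t)e^{it\,2^k}$, and consider
$$
F_N \,=\,\sum_{k=1}^N c_k L_{b_k},
$$
with coefficients $c_k$ to be chosen. Since the boundary trace of $L_{b_k}$ has Fourier content concentrated in the dyadic annulus about $2^k$, the Littlewood-Paley piece $F_N\star\phi_j$ picks out essentially the single summand indexed by $j$ (up to rapidly decaying tails), which leads to a lower bound $\norme{F_N}_{\B}\gtrsim\sum_{k=1}^N\vert c_k\vert$. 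For the $\A_0$-norm one uses the duality $\A_0(\R)^*\simeq\PM$ of Theorem \ref{DualA0} together with Remark \ref{DualA0+}: testing against $\eta_T\in\A_0(\R)^*$ associated to a multiplier $T=T_m\in\PM$ gives
$$
\langle\eta_T,F_N\rangle\,=\,\sum_{k=1}^N c_k\int_0^\infty m(t)\chi(t)e^{it\,2^k}\,dt.
$$
Theorem \ref{thmmutlH1} restricts the symbols $m$ arising from $\PM$ (in particular $\norme{m}_\infty\leq\norme{T}$, but more importantly they are far from being arbitrary $L^\infty$ functions), so the oscillatory integrals against the bump $\chi$ should yield a Cauchy-Schwarz-type bound of order $(\sum_k\vert c_k\vert^2)^{1/2}$. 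Taking $c_k=1/\sqrt{N}$ (or random signs $\varepsilon_k/\sqrt{N}$ and a Khintchine-type argument) would then give $\norme{F_N}_{\A_0}\lesssim 1$ while $\norme{F_N}_{\B}\gtrsim\sqrt{N}$, separating the norms.

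\textbf{Main obstacle.} The analytic heart of the proof is precisely the refined $\A_0$-bound, which must beat the trivial estimate $\norme{L_{b_k}}_{\A_0}\leq\norme{b_k}_1$. This is where the $H^1$-multiplier constraint must be used in an essential way, exploiting either cancellation in the Fourier coefficients of $m\chi$ at lacunary frequencies or the regularity properties forced on symbols in $\MH$. The whole construction parallels Peller's disc-case argument \cite{pel1} from which the definition of $\A_0$ was inspired, and the strict inclusion $\B_0\subsetneq\A_0$ is the half-plane analogue of his separation between the Besov algebra and his tensor-product algebra.
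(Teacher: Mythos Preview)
Your reduction to $\A_0(\C_+)\neq\B_0(\C_+)$ and the open-mapping argument are fine, but the proposed construction does not work and the core estimate is missing.

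First, there is a computational slip. With $b_k(t)=\chi(t)e^{it2^k}$, the boundary trace of $L_{b_k}$ is $s\mapsto\widehat{b_k}(-s)=\widehat{\chi}(-s-2^k)$, whose Fourier transform is $2\pi b_k$, supported on $\mathrm{supp}(\chi)$. Thus every $F_N$ has Fourier support in the \emph{fixed} compact set $\mathrm{supp}(\chi)$, so only a bounded number of Littlewood--Paley pieces $\phi_j\star F_N$ are nonzero and $\norme{F_N}_{\B}\approx\norme{F_N}_\infty$. The claimed lower bound $\norme{F_N}_{\B}\gtrsim\sum_k|c_k|$ therefore has no basis. (You may have intended $b_k$ to be a bump \emph{located} near $2^k$, e.g.\ $b_k=\chi(\cdot-2^k)$; that would repair the Besov side.)

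More seriously, even with a corrected construction the step you flag as the ``main obstacle'' is the entire content of the theorem, and nothing in the paper supplies it. Bounding $\norme{F_N}_{\A_0}$ from above via duality means controlling $\sum_k c_k\int m(t)b_k(t)\,dt$ uniformly over \emph{all} symbols $m$ of unit-norm $H^1$-multipliers. The only structural information available is $\norme{m}_\infty\leq 1$ (Theorem \ref{thmmutlH1}), which recovers nothing beyond the trivial $\ell^1$ bound. No $\ell^2$-type cancellation result for $\MH$-symbols at lacunary frequencies is proved anywhere in the paper, and such a statement would itself be a substantial theorem.

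The paper avoids this difficulty by working on the dual side. Rather than bounding $\norme{\cdot}_{\A_0}$ from above on a sequence of elements, it produces a \emph{single} bounded continuous $m\colon\R_+^*\to\C$ (Lemma \ref{lemBnoeqA}) satisfying $\sup_k\norme{\widehat{m\psi_k}}_1<\infty$ --- so that $b\mapsto\int mb$ is bounded on $\B_0(\R)$ --- while $T_m\notin\MH$, the latter shown by testing $T_m$ on explicit $H^p$ functions and invoking Lemma \ref{noMpisnotM1}. If $\A_0=\B_0$ held, this functional would lie in $\A_0(\R)^*$, and Theorem \ref{DualA0} together with Remark \ref{DualA0+} would force $m$ to be the symbol of an element of $\MH$, a contradiction. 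Exhibiting one bad symbol is far more tractable than the uniform control over all good symbols that your approach requires.
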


We need some preparation before coming to the proof.
We use an idea from \cite[Paragraph 2.6.4]{triebel}.
First for the definition of the Besov space
$\B_0(\R)$, we make the additional assumption that 
$\psi(t)=1$ for any $t\in\bigl[\frac34,1\bigr]$.
This is allowed by the aforementioned fact that the definition
of $\B_0(\R)$ does not depend on $\psi$. This implies that 
${\rm Supp}(\psi)\subset \bigl[\frac12,\frac32]$.
Second we fix a non-zero
function
$f_0\in\SR$ such that ${\rm Supp}(f_0)\subset \bigl[\frac34,1\bigr]$.
Next for any integer $n\geq 0$, we set $N_n=2^n-1$ 
and $f_n = \tau_{N_n} f_0 = f_0(\,\cdotp -N_n)$. By construction,
${\rm Supp}(\psi_k)\subset \bigl[2^{k-1},\frac32 2^k]$
for all $k\in\Z$ and
${\rm Supp}(f_n)\subset \bigl[2^n-\frac14,2^n]$
for all $n\geq 0$. We derive that
\begin{equation}\label{product}
\forall\, k\geq 0 :\qquad
f_k\psi_k = f_k
\quad\hbox{and}\quad f_n\psi_k=0\ \hbox{if}\ n\not=k,
\end{equation}
as well as
\begin{equation}\label{product2}
\forall\, n,n'\geq 0 : \qquad f_n f_{n'}=0\ \hbox{if}\ n\not=n'.
\end{equation}

\begin{lem}\label{lemBnoeqA}
There exists a bounded continuous
function $m \colon \R_+^*\to\C$ such that 
\begin{equation}\label{eqB1infty}
\underset{k\in \Z}\sup 
\norm{\widehat{m\psi_k}}_1 < \infty 
\end{equation}
and the mapping $T_m\colon H^2(\R)\to H^2(\R)$ does not belong to
$\MH$.
\end{lem}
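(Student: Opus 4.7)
My plan is to take $m$ of the form
$m=\sum_{n\geq 0}a_n f_n$
where $(a_n)\in\ell^\infty$ is a bounded sequence of scalars to be chosen. Since the supports of the $f_n$'s are pairwise disjoint and contained in $\R_+^*$, the series converges pointwise, $m$ is bounded and continuous on $\R_+^*$ with $\norm{m}_\infty=\norm{f_0}_\infty\sup_n|a_n|$. Using the support identities \eqref{product}, one reads off $m\psi_k=a_k f_k$ for $k\geq 0$ and $m\psi_k=0$ for $k<0$, so
$$
\norm{\widehat{m\psi_k}}_1 = |a_k|\,\norm{\widehat{f_0}}_1\,\leq\,\norm{\widehat{f_0}}_1\,\sup_n|a_n|,
$$
which gives \eqref{eqB1infty} for every bounded $(a_n)$. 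So the whole content of the lemma is to select $(a_n)\in\ell^\infty$ for which $T_m$ fails to extend boundedly to $H^1(\R)$.

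I would argue by contradiction. Assume that for every $a\in\ell^\infty$, the operator $T_{m_a}$ belongs to $\MH$. The map $\Phi\colon \ell^\infty\to B(H^1(\R))$, $a\mapsto T_{m_a}$, is linear and has closed graph: if $a^{(j)}\to a$ in $\ell^\infty$ then $m_{a^{(j)}}\to m_a$ in $L^\infty$, hence $T_{m_{a^{(j)}}}\to T_{m_a}$ in $B(H^2(\R))$, and any $B(H^1(\R))$-limit of $T_{m_{a^{(j)}}}$ must agree with $T_{m_a}$ on the dense subspace $H^1(\R)\cap H^2(\R)$, with the symbol recovered uniquely via Theorem \ref{thmmutlH1}. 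By the closed graph theorem one would then have $\norm{T_{m_a}}_{B(H^1)}\leq K\norm{a}_\infty$ for some constant $K$ independent of $a$. Specialising to Rademacher sequences $a=\varepsilon\in\{\pm1\}^\N$ and averaging by Khintchine's inequality pointwise in $t\in\R$ converts this to the Littlewood--Paley square-function estimate
$$
\Bignorm{\Bigl(\sum_{n\geq 0}\vert T_{f_n}h(t)\vert^2\Bigr)^{1/2}}_1\,\lesssim\,K\norm{h}_1,\qquad h\in H^1(\R),
$$
which must be contradicted.

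The main obstacle is that, because $\{N_n\}$ is lacunary and the $f_n$'s model a smooth dyadic-ring decomposition, the system $\{T_{f_n}\}$ actually does satisfy a genuine $g$-function bound on $H^1(\R)$: naive test vectors such as $h=\sum_{n=0}^{N-1} c_n\mathcal{F}^{-1}(f_n) = g_0(t)\sum c_n e^{iN_n t}$ (with $g_0=\mathcal{F}^{-1}(f_0)$) give $\norm{h}_1\asymp \norm{(\sum\vert T_{f_n}h\vert^2)^{1/2}}_1$ up to constants, so the Rademacher averaging step by itself does not produce a contradiction. Following the idea invoked from \cite[Paragraph 2.6.4]{triebel}, one must break this by choosing a deterministic, oscillatory sequence $(a_n)\in\ell^\infty$---for instance unimodular phases $a_n=e^{i\theta_n}$ with $\theta_n$ arithmetically tuned---and pair $m=\sum a_nf_n$ with a carefully constructed $h\in H^1(\R)$ whose Fourier transform is spread across the $f_n$-supports in a way that forces $T_m h\notin L^1(\R)$ while keeping $h$ in $H^1(\R)$. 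Pinning down exactly which $(a_n)$ and which $h$ realise this failure is the heart of the argument; I expect this is where the bulk of the Triebel-type machinery is spent, and where the proof genuinely uses that $H^1$-boundedness is strictly stronger than what \eqref{eqB1infty} guarantees.
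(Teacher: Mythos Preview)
Your proof is incomplete, as you yourself acknowledge in the final paragraph, and the ansatz you have committed to makes it hard to close. Two concrete ideas from the paper's proof are missing.

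First, the coefficients are not scalars but $t$-dependent modulations: the paper takes
$$
m(t) = \sum_{n\geq 0} e^{iN_n t}\, f_n(t).
$$
Since modulation on the physical side is translation on the Fourier side, one still has $\widehat{m\psi_k} = \widehat{f_k}(\,\cdot - N_k)$ and hence $\norm{\widehat{m\psi_k}}_1 = \norm{\widehat{f_0}}_1$, so \eqref{eqB1infty} holds exactly as in your computation. The point of the modulations is that they can be cancelled by a matching test function: with
$$
g_N = \mathcal{F}^{-1}\Bigl(\sum_{n=0}^N e^{-iN_n \,\cdot}\, f_n\Bigr)
$$
one gets $m\,\widehat{g_N} = \sum_{n=0}^N f_n^2$, a sum of \emph{non-oscillating} bumps in disjoint dyadic intervals. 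Your scalar ansatz cannot manufacture this cancellation; as you correctly diagnosed, any test function built from the $f_n$-blocks leaves both $h$ and $T_m h$ of the same form (Schwartz envelope times a lacunary trigonometric sum), and no mismatch appears.

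Second, the paper does not attack $H^1$-boundedness head-on. It shows instead that $T_m$ is unbounded on $H^p(\R)$ for every $p>2$ and then invokes Lemma \ref{noMpisnotM1} ($\MH \subset \MHp$). This detour is what makes the estimate elementary: one checks $\norm{g_N}_1 \lesssim N$ and $\sup_N \norm{g_N}_\infty < \infty$ (the translates $g_0(\,\cdot - N_n)$ are summable because the $N_n$ are lacunary), so $\norm{g_N}_p \lesssim N^{1/p}$ by interpolation; on the other hand, the Littlewood--Paley square-function theorem applied to $\sum_{n=0}^N f_n^2$ gives $\norm{T_m g_N}_p \approx N^{1/2}\norm{\mathcal{F}^{-1}(f_0^2)}_p$, since $\vert\mathcal{F}^{-1}(f_n^2)\vert = \vert\mathcal{F}^{-1}(f_0^2)\vert$ for every $n$. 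For $p>2$ the exponents $1/p$ and $1/2$ do not match, whence $T_m \notin \MHp$ and therefore $T_m \notin \MH$.

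So the Triebel idea referenced before the lemma is precisely the pair ``modulated symbol / conjugate-modulated test function'', not a search for clever scalar phases; and the route to $H^1$ goes through $H^p$ via Lemma \ref{noMpisnotM1}, not through a direct $L^1$ blow-up.
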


\begin{proof} Using the definitions preceding the lemma, we set
$$
m(t) =\sum_{n=0}^\infty e^{iN_n t}f_n(t),\qquad t>0.
$$
At most one term is non zero in this sum, hence this
is well-defined 
and $m\in C_b(\R_+^*)$. Let $k\geq 0$.
According to (\ref{product}), we have
$m\psi_k = e^{iN_k\,\cdotp} f_k$ hence
$$
\norm{\widehat{m\psi_k}}_1
= \norm{\widehat{f_k}(\cdotp - N_k)}_1
=\norm{\widehat{f_k}}_1 =\norm{\widehat{f_0}}_1.
$$
Since $m\psi_k=0$ if $k<0$, this
shows (\ref{eqB1infty}).

Define
$$
g_N = {\mathcal F}^{-1}\Bigl(
\sum_{n=0}^N 
e^{-iN_n\,\cdotp}f_n\Bigr)
$$
for all $N\geq 0$. 
Then $g_N\in \SR\cap H^1(\R)$ 
hence $g_N\in H^p(\R)$ for any $1\leq p\leq\infty$.
Let us estimate its $L^p$-norm.
On the one hand, we have
$$
\norm{g_N}_1\leq 
\sum_{n=0}^N \bignorm{{\mathcal F}^{-1}(e^{-iN_n\,\cdotp}f_n)}_1
= \sum_{n=0}^N \bignorm{\bigl[{\mathcal F}^{-1}(f_n)\bigr](\,\cdotp -N_n))}_1
= \sum_{n=0}^N \bignorm{{\mathcal F}^{-1}(f_n)}_1,
$$
hence
$$
\norm{g_N}_1\leq  (N+1)\norm{{\mathcal F}^{-1}(f_0)}_1.
$$
On the other hand, for any $t\in\R$,
we have
$$
g_N(t) = \sum_{n=0}^N 
{\mathcal F}^{-1}(f_n)(t-N_n),
$$
hence
$$
\vert g_N(t)\vert \leq \sum_{n=0}^N \vert
g_0(t-N_n)\vert.
$$
Since $g_0=\mathcal{F}^{-1}(f_0)\in\SR$ we infer that 
$$
\sup_{N\geq 0}\norm{g_N}_\infty\,<\infty.
$$

For any $1<p<\infty$, we have
$\norm{g_N}_p\leq\norm{g_N}_1^{\frac{1}{p}}
\norm{g_N}_\infty^{1-\frac{1}{p}}$, hence 
the above estimates imply the existence
of a constant $K>0$ such that
\begin{equation}\label{Np}
\norm{g_N}_p\leq K N^{\frac{1}{p}},\qquad N\geq 1.
\end{equation}

By (\ref{product2}), we have
$$
m\widehat{g_N} = 
\Bigl(\sum_{n=0}^\infty e^{iN_n\,\cdotp}f_n
\Bigr)\Bigl(
\sum_{n=0}^N 
e^{-iN_n\,\cdotp}f_n\Bigr) = \sum_{n=0}^N f_n^2.
$$
For any $n\geq 0$, ${\rm Supp}(f_n^2)\subset [2^{n-1},2^n]$
hence by \cite[Theorem 5.1.5.]{gra}, we have an estimate
$$
\bignorm{{\mathcal F}^{-1}(m\widehat{g_N})}_p\approx
\Bignorm{\Bigl(\sum_{n=0}^N \vert
{\mathcal F}^{-1}(f_n^2)\vert^2\Bigr)^\frac12}_p.
$$
Further, $f_n^2 = f_0^2(\,\cdotp -N_n)$ hence 
$\vert
{\mathcal F}^{-1}(f_n^2)\vert=\vert
{\mathcal F}^{-1}(f_0^2)\vert$ for any $n\geq 0$. Consequently,
$$
\Bigl(\sum_{n=0}^N \vert
{\mathcal F}^{-1}(f_n^2)\vert^2\Bigr)^\frac12
=(N+1)^\frac12\vert
{\mathcal F}^{-1}(f_0^2)\vert.
$$
Thus we have
$$
\bignorm{{\mathcal F}^{-1}(m\widehat{g_N})}_p\approx
N^\frac12.
$$
Comparing with (\ref{Np}) we deduce that if $2<p<\infty$, then
$T_m\colon H^2(\R)\to H^2(\R)$ is not a bounded
Fourier multiplier on $H^p(\R)$.
By Lemma \ref{noMpisnotM1}, we deduce that 
$T_m\notin \MH$. 
\end{proof}

\begin{proof}[Proof of Theorem \ref{AdifferentB0}]
If $\A(\C_+)$ were equal to $\B_0(\C_+)$, we would have
$\A_0(\C_+)=\B_{00}(\C_+)$ which in turn is equivalent to
$\A_0(\R) =\B_{00}(\R)$. So it suffices to show that
this equality fails. 
Let us assume, by contradiction, that
$\A_0(\R) =\B_{00}(\R)$.

Let $m$ be given by Lemma \ref{lemBnoeqA}. 
Let $b\in L^1(\R_+)$. For any $k\in\Z$, we have
$$
\psi_kb = \,\frac{1}{2\pi}\,{\mathcal F}\bigl(
\phi_k\star \widehat{b}(-\,\cdotp)\bigr).
$$
Hence using (\ref{sum1}), (\ref{sum2})
and Lemma \ref{Tool}, we have
\begin{align*}
\int_{-\infty}^{\infty} m(t)b(t)\,dt 
& = \sum_{k\in\Z} 
\int_{-\infty}^{\infty}
m(t)\psi_k(t)b(t)\,dt\\
&= \sum_{k\in \Z} \int_{-\infty}^{\infty}
\bigl(\psi_{k-1}(t)+\psi_k(t)+\psi_{k+1}(t)\bigr) m(t)\psi_k(t)b(t)\,dt \\
&=\,\frac{1}{2\pi}\,\sum_{k\in \Z} \int_{-\infty}^\infty 
\bigl[\mathcal{F}\bigl((\psi_{k-1}+\psi_k+\psi_{k+1})m\bigr)\bigr](u)
\bigl[\mathcal{F}(\psi_k b)\bigr](-u)\, du \\
&= \,\sum_{k\in \Z} 
\int_{-\infty}^\infty 
\bigl[\mathcal{F}\bigl((\psi_{k-1}+\psi_k+\psi_{k+1})m\bigr)\bigr](u)
\bigl[\phi_k\star\widehat{b}(-\,\cdotp)\bigr](u)\, du
\end{align*}
Therefore,
$$
\Bigl\vert 
\int_{-\infty}^\infty
m(t)b(t)\,dt\,\Bigr\vert\,\leq\,
\sum_{k\in \Z} 
\bignorm{\mathcal{F}\bigl((\psi_{k-1}+\psi_k+\psi_{k+1})m\bigr)}_1
\bignorm{\phi_k\star\widehat{b}(-\,\cdotp)}_\infty.
$$
Applying  \eqref{eqB1infty}, we deduce the existence of a constant 
$K>0$ such that
$$
\Bigl\vert 
\int_{-\infty}^\infty
m(t)b(t)\,dt\,\Bigr\vert\,\leq\,
K \sum_{k\in \Z} \bignorm{\phi_k\star\widehat{b}(-\,\cdotp)}_\infty
= K\norm{\widehat{b}(-\,\cdotp)}_{\B_0}.
$$
Therefore there exists $\eta\in\B_{00}(\R)^*$ such that
$$
\langle\eta,\widehat{b}(-\,\cdotp)\rangle
= 
\int_{-\infty}^\infty
m(t)b(t)\,dt,\qquad b\in L^1(\R_+).
$$
By assumption, $\eta\in\A_0(\R)^*$. Applying 
Theorem \ref{DualA0}, let $T\in\MH$ be associated to $\eta$ and let 
$m_0\in C_b(\R_+^*)$ be the symbol of $T$. Then by Remark 
\ref{DualA0+}, we have
$$
\langle\eta,\widehat{b}(-\,\cdotp)\rangle
= 
\int_{-\infty}^\infty
m_0(t)b(t)\,dt,\qquad b\in L^1(\R_+).
$$
We deduce that $m_0=m$, and this
contradicts the fact that $T_m\notin\MH$.
\end{proof}

We conclude this section with a series of remarks.

\begin{rq1}\label{narrow} 
Let $A$ be as in Subsections \ref{MAIN} and \ref{FCA}.
Let $\mu\in M(\R_+)$, with $\mu(\{0\})=0$. 
According to \cite[Subsection 2.2 $\&$ Proposition 6.2]{bgt1},
its Laplace transform $L_\mu\colon\C_+\to\C$ belongs
to $\B_0(\C_+)$. Hence $L_\mu$ belongs
to $\A(\C_+)$, by Proposition \ref{embeddingBesovA}.
The argument in the proof of Corollary
\ref{main3} shows that $\rho_{A}(L_\mu)$ is the strong limit
of $\rho_{0,A}(L_\mu\widetilde{G_N})$, when $N\to\infty$.
Define $c_N(t)=
Ne^{-Nt}$ for any $t> 0$ and recall that 
$\widetilde{G_N} = L_{c_N}$. Then $L_\mu \widetilde{G_N} = L_{\mu\star c_N}$
for any $N\geq 1$. Further $\mu\star c_N\to \mu$ narrowly, when $N\to\infty$.
It therefore follows from (\ref{main2}) that
$$
[\rho_A(L_\mu)](x) = \,\int_{\R_+} T_t(x)\,d\mu(t),\qquad x\in H.
$$
\end{rq1}

\begin{rq1}\label{D}  
Let ${\mathcal D}\subset H^1(\R)$ be the space of all $h\in H^1(\R)$
such that ${\rm Supp}(\widehat{h})$ is a compact subset of $\R_+^*$.
It is well-known that ${\mathcal D}$ is dense in
$H^1(\R)$. To check this, take any 
$h\in H^1(\R)$ and recall that there exist
$v,w \in H^2(\R)$ such that $h=wv$. Let 
$(d_n)_{n\in\N}$ and $(c_n)_{n\in\N}$ be sequences
of $C_b(\R_+^*)$ with compact supports
such that $d_n\to \widehat{w}$ and
$c_n\to \widehat{v}$ in $L^2(\R_+)$. Then 
$\mathcal{F}^{-1}(d_n)\to w$ and
$\mathcal{F}^{-1}(c_n)\to v$ in $H^2(\R)$, hence
$\mathcal{F}^{-1}(d_n)\mathcal{F}^{-1}(c_n)\to h$ in $H^1(\R)$.
Now it is easy to see that $\mathcal{F}^{-1}(d_n)\mathcal{F}^{-1}(c_n)$
belongs to ${\mathcal D}$ for any $n\in\N$.

Let $BUC\star {\mathcal D}\subset \A(\R)$ be the linear span of the functions
$f\star h$, for $f\in BUC(\R)$ and $h\in {\mathcal D}$. 
It follows from above that 
this is a dense subspace of $\A(\R)$.

Let ${\mathcal G}\subset H^\infty(\R)$ be the space of all $F\in H^\infty(\R)$
such that ${\rm Supp}(\widehat{F})$ 
is a compact subset of $\R_+^*$. Then we have
$$
BUC\star {\mathcal D}\subset {\mathcal G}\subset \B_0(\R).
$$
The first inclusion is obvious and the second one 
is given by \cite[Lemma 2.4]{bgt1}.

It follows that $\B_0(\R)$ is dense in $\A(\R)$, or equivalently that 
$\B_0(\C_+)$ is dense in $\A(\C_+)$.

Also $\B_{00}(\C_+)$ is dense in $\A_{00}(\C_+)$, by (\ref{LM})
and Lemma \ref{Laplace}.
\end{rq1}

\begin{rq1}\label{nonunital}
It follows from \cite[Subsection 2.2]{bgt1} 
that for any $\varphi\in\B_0(\C_+)$,
$\lim_{y\to\infty} \varphi(y)=0$, where the limit is taken 
for $y$ going to $\infty$ along the real axis.
We noticed in Remark \ref{D} that $\B_0(\C_+)$ is dense in $\A(\C_+)$.
Since $\norm{\,\cdotp}_{H^\infty(\C_+)}\leq
\norm{\,\cdotp}_{\A(\C_+)}$,
this implies that any element of $\A(\C_+)$ is the uniform 
limit of a sequence of $\B_0(\C_+)$.
Consequently, $\lim_{y\to\infty} \varphi(y)=0$ for any
$\varphi\in\A(\C_+)$.

Thus the algebra $\A(\C_+)$ (equivalently, the algebra
$\A(\R)$) does not contain any non-zero constant function
and hence is not unital.
\end{rq1}

\begin{rq1}\label{open} The following problem is open: Let 
$-A$ be the generator of a bounded $C_0$-semigroup on a Hilbert space.
Is the Cayley transform $V=(A-I_H)(A+I_H)^{-1}$ power bounded?
This problem is discussed in \cite[Section 5.5]{bgt1}, to
which we refer for information.

Let $v\in H^\infty(\C_+)$ be defined by $v(z)=(z-1)(z+1)^{-1}$.
It follows from Remark \ref{Rational} that
for any integer $n\geq 1$, the function $\varphi_n\colon\C_+\to\C$
defined by
$\varphi_n(z) = v(z)^n - (-1)^n$
belongs to $\A_0(\C_+)$ and 
$$
V^n= (-1)^n I_H +\rho_{0,A}(\varphi_n).
$$
Hence $\norme{V^n} = O\bigl(\norme{\varphi_n}_{\A_0}\bigr)$.

Therefore it would be interesting to determine the behaviour
of $\norme{\varphi_n}_{\A_0}$. It is shown in \cite[Section 5.1]{bgt2}
that $\norme{\varphi_n}_{\B_0}\asymp {\rm log(n)}$. We do not know
if the asymptotic behaviour of
$\norme{\varphi_n}_{\A_0}$ differs from the one of $\norme{\varphi_n}_{\B_0}$.
\end{rq1}

\section{$\gamma$-Bounded semigroups on Banach spaces}\label{Banach}
In general, Theorem \ref{main1} and Corollary \ref{main3}
do not hold true if $H$ is replaced by an arbitrary Banach space.
Indeed it is shown in \cite[Corollary 6.7]{bgt2} that 
the translation
semigroup $(T_t)_{t\geq 0}$
on $L^p(\R)$, for $1\leq p\not=2<\infty$, does not satisfy
condition (i) in Theorem \ref{BGT}. Hence
by the latter theorem and Proposition \ref{embeddingBesovA},
the mapping 
$$
L_b\mapsto \int_{0}^{\infty} b(t)T_t\, dt\,,\qquad b\in L^1(\R_+),
$$
is not bounded with respect to the $\A_0(\C_+)$-norm.

In this section we will however establish  Banach space versions
of Theorem \ref{main1} and Corollary \ref{main3} on Banach spaces,
involving $\gamma$-boundedness. We start with some background 
and basic facts on this topic
and refer to \cite[Chapter 9]{hnvw} for details and more information.

Let $X$ be a Banach space. Let $(\gamma_n)_{n \geq 1}$ be a 
sequence of independent complex valued standard Gaussian variables 
on some probability space $\Sigma$ and let 
$G_0\subset L^2(\Sigma)$ be the linear span of the $\gamma_n$.
We denote by $G(X)$ the closure of 
\[
G_0\otimes X = \Bigl\{
\sum_{k=1}^N \gamma_k \otimes x_k\, :\, x_k \in X, \, N \in \N 
\Bigr\}
\]
in the Bochner space $L^2(\Sigma;X)$,
equipped with the induced norm. Next we let 
$G'(X^*)$ denote the closure of $G_0\otimes X^*$
in the dual space $G(X)^*$.

A bounded set $\mathcal{T}\subset B(X)$ is called 
$\gamma$-bounded if
there exists a constant $C \geq 0 $ 
such that for all finite sequences 
$(S_k)_{k=1}^{N} \subset \mathcal{T}$ and $(x_k)_{k=1}^{N} \subset X$, we have:
\begin{equation}\label{Rboundedness}
\Bignorm{\sum_{k=1}^N \gamma_k\otimes S_k(x_k)}_{G(X)} 
\leq C\Bignorm{\sum_{k=1}^N \gamma_k\otimes x_k}_{G(X)}.
\end{equation}
The least admissible constant $C$
in the above inequality is called the 
$\gamma$-bound of $\mathcal{T}$ and is denoted by 
$\gamma(\mathcal{T})$.  

Let $Z$ be any Banach space and let ${\rm Ball}(Z)$ denote
its closed unit ball. A bounded operator $\rho\colon Z\to B(X)$
is called $\gamma$-bounded if
the set $\rho({\rm Ball}(Z))\subset B(X)$ 
is $\gamma$-bounded. In this case
we set $\gamma(\rho) = \gamma(\rho({\rm Ball}(Z)))$.

We now turn to the definition of $\gamma$-spaces, which 
goes back to the paper \cite{kal-wei1} (which began to circulate
20 years ago).
Let $H$ be a Hilbert space. A bounded operator $T 
\colon H \rightarrow X$ is called $\gamma$-summing if 
\[
\norme{T}_{\gamma} := \sup\Bigl\{ 
\Bignorm{\sum_{k=1}^{N}\gamma_k\otimes T(e_k)}_{G(X)}\Bigr\} < \infty,
\]
where the supremum is taken over all finite 
orthonormal systems $(e_k)_{k=1}^{N}$ in $H$. 
We let $\gamma_{\infty}(H;X)$ denote the space of all 
$\gamma$-summing operators and we endow it with the norm 
$\norme{\,\cdotp}_{\gamma}$. Then $\gamma_{\infty}(H;X)$  is a Banach space. 
Any finite rank bounded operator 
is $\gamma$-summing. We let 
$$
\gamma(H;X)\subset
\gamma_{\infty}(H;X)
$$
denote the
closure of the space of  finite rank bounded operators 
in $\gamma_{\infty}(H;X)$. In the sequel, finite rank bounded operators
are represented by the algebraic tensor product $H^*\otimes X$
in the usual way.

Following \cite[Section 5]{kal-wei1}, we let $\gamma'_+(H^*;X^*)$
be the space of all bounded operators $S\colon H^*\to X^*$
such that 
$$
\norme{S}_{\gamma'}  := \sup\bigl\{ \vert {\rm tr}(T^*S)\vert\, \big\vert
\, T\colon H\to X,\, {\rm rank}(T)<\infty,\,\norm{T}_\gamma\leq 1\bigr\}\,<\infty.
$$
Then $\norme{\,\cdotp}_{\gamma'}$ is a norm on $\gamma'_+(H^*;X^*)$
and according to \cite[Proposition 5.1]{kal-wei1}, we have
\begin{equation}\label{Dual-gamma}
\gamma'_+(H^*;X^*)\,=\,\gamma(H;X)^*
\end{equation}
isometrically, through the duality pairing 
$$
(S,T)\mapsto {\rm tr}(T^*S),\qquad T\in 
\gamma(H;X),\ S\in \gamma'_+(H^*;X^*).
$$

We will focus on the
case when $H$ is an $L^2$-space. 
Let $(\Omega,\mu)$ be a $\sigma$-finite
measure space. We identify $L^2(\Omega)^*$
and $L^2(\Omega)$ in the usual way.
A function 
$\xi\colon\Omega \rightarrow X$ is called
weakly-$L^2$ if for each $x^* \in X^*$,  
the function $\langle x^*, \xi(\,\cdotp) \rangle$  
belongs to $L^2(\Omega)$. Then the operator
$x^*\mapsto \langle x^*, \xi(\,\cdotp) \rangle$ from
$X^*$ into $L^2(\Omega)$ is bounded.
If $\xi$ is both measurable and weakly-$L^2$, then
its adjoint takes values in $X$ and we let 
$\mathbb{I}_\xi \colon L^2(\Omega)\to X$ denote the resulting
operator. More explicitly, 
$$
\langle x^*, \mathbb{I}_\xi (g)\rangle=
\int_\Omega 
g(t)\langle x^*, \xi(t) \rangle\,d\mu(t)\, , 
\qquad g\in L^2(\Omega),\, x^*\in X^*.
$$ 
We let $\gamma(\Omega;X)$ be the space of all measurable and 
weakly-$L^2$ functions $\xi\colon\Omega \rightarrow X$ such that 
$\mathbb{I}_\xi$ belongs to $\gamma(L^2(\Omega);X)$, and
we write  
$\norme{\xi}_{\gamma} = \norme{\mathbb{I}_\xi}_{\gamma}$
for any such function.

Likewise a function 
$\zeta\colon\Omega \rightarrow X^*$ is called
weakly$^*$-$L^2$ if for each $x \in X$,  
the function $\langle \zeta(\,\cdotp),x \rangle$  
belongs to $L^2(\Omega)$. In this case,
the operator $x\mapsto\langle \zeta(\,\cdotp),x \rangle$
from $X$ into $L^2(\Omega)$ is bounded and we
let $\mathbb{I}_\zeta \colon L^2(\Omega)\to X^*$
denote its adjoint. 
We let $\gamma'_+(\Omega;X^*)$ be the space of all  
weakly$^*$-$L^2$ functions $\zeta\colon\Omega \rightarrow X$ such that 
$\mathbb{I}_\zeta$ belongs to $\gamma'_+(L^2(\Omega);X)$, and
we write  
$\norme{\zeta}_{\gamma'} = \norme{\mathbb{I}_\zeta}_{\gamma'}$
for any such function. 

Note that our space
$\gamma'_+(\Omega;X^*)$ is a priori bigger than the one from
\cite[Definition 4.5]{kal-wei1},
where only measurable 
functions $\Omega\to X^*$ are considered.

\begin{lem}\label{Integral}
For any $\xi\in\gamma(\Omega;X)$ and any 
$\zeta\in \gamma'_+(\Omega;X^*)$,
the function $t\mapsto \langle\zeta(t),\xi(t)\rangle$
belongs to $L^1(\Omega)$ and in the duality (\ref{Dual-gamma}), 
we have
$$
\langle \mathbb{I}_\zeta,\mathbb{I}_\xi\rangle
=\,\int_\Omega \langle\zeta(t),\xi(t)\rangle\, d\mu(t).
$$
Moreover
$$
\int_\Omega \vert\langle\zeta(t),\xi(t)\rangle\vert
\,d\mu(t)\,\leq \norm{\xi}_\gamma\norm{\zeta}_{\gamma'}.
$$
\end{lem}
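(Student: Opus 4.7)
The plan is to reduce everything to the case where $\mathbb{I}_\xi$ is a finite-rank operator (equivalently, $\xi(t) = \sum_{k=1}^n f_k(t)x_k$ for some $f_k \in L^2(\Omega)$ and $x_k \in X$), and then to invoke the density of such finite-rank operators in $\gamma(L^2(\Omega);X)$.

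First I would settle the finite-rank case. For $\xi_0(t) = \sum_{k=1}^n f_k(t)x_k$, a direct computation shows that $\mathbb{I}_{\xi_0}^{\,*}\mathbb{I}_\zeta \colon L^2(\Omega) \to L^2(\Omega)$ is a finite-rank operator whose trace equals $\int_\Omega \langle\zeta(t),\xi_0(t)\rangle\,d\mu(t)$, which proves the identity in this case (the trace calculation reduces via the bilinear $L^2$-pairing to the sum $\sum_k \int f_k(t)\langle\zeta(t),x_k\rangle\,d\mu(t)$). For the absolute-value bound, I would use a rotation trick: choose a measurable unimodular $u\colon\Omega\to\mathbb{T}$ with $u(t)\langle\zeta(t),\xi_0(t)\rangle = |\langle\zeta(t),\xi_0(t)\rangle|$, observe that $u\xi_0$ is still finite-rank with $\mathbb{I}_{u\xi_0} = \mathbb{I}_{\xi_0}\circ M_u$, where $M_u$ is the unitary multiplication operator on $L^2(\Omega)$. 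Since right-composition with a unitary preserves the $\gamma$-norm, $\norm{u\xi_0}_\gamma = \norm{\xi_0}_\gamma$, and applying the just-established identity to $u\xi_0$ yields $\int_\Omega|\langle\zeta(t),\xi_0(t)\rangle|\,d\mu(t)\leq\norm{\xi_0}_\gamma\norm{\zeta}_{\gamma'}$.

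For general $\xi \in \gamma(\Omega;X)$, I would produce a sequence of finite-rank functions $\xi_n$ enjoying both $\mathbb{I}_{\xi_n}\to\mathbb{I}_\xi$ in the $\gamma$-norm and $\xi_n\to\xi$ almost everywhere. A natural construction uses conditional expectations: since $\xi$ is essentially separably valued, one can fix an increasing sequence of finite $\sigma$-subalgebras $\mathcal{F}_n$ generating the relevant part of the $\sigma$-algebra of $\Omega$, and set $\xi_n = \mathbb{E}[\xi\,|\,\mathcal{F}_n]$; the corresponding operator $\mathbb{I}_{\xi_n} = \mathbb{I}_\xi\circ E_n$ converges to $\mathbb{I}_\xi$ in $\gamma$-norm because the orthogonal projections $E_n$ on $L^2(\Omega)$ tend strongly to the identity and right-composition is contractive on $\gamma(L^2(\Omega);X)$. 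Applying the finite-rank bound to $\xi_n-\xi_m$ shows that $(\langle\zeta,\xi_n\rangle)_n$ is Cauchy in $L^1(\Omega)$; extracting an a.e.~convergent subsequence identifies the $L^1$-limit with $\langle\zeta,\xi\rangle$, which gives integrability and the inequality simultaneously. Passing to the limit in $\langle\mathbb{I}_\zeta,\mathbb{I}_{\xi_n}\rangle = \int_\Omega\langle\zeta,\xi_n\rangle\,d\mu$ then yields the duality identity.

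The main obstacle is the approximation step in the final stage: arranging simultaneous $\gamma$-norm and a.e.~convergence. The a.e.~convergence does not follow from $\gamma$-norm convergence alone, and because $\xi$ is only weakly-$L^2$ rather than Bochner-integrable, the martingale convergence argument for $\mathbb{E}[\xi\,|\,\mathcal{F}_n]\to\xi$ needs to be carried out carefully, using the essential separability of the range of $\xi$ to reduce to a $\sigma$-finite, countably generated setting where one can apply classical scalar martingale convergence to the functions $t\mapsto\langle x^*,\xi(t)\rangle$ on a countable dense set of functionals.
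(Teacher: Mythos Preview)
The paper does not actually prove this lemma: it states that for measurable $\zeta$ the result is \cite[Corollary 5.5]{kal-wei1}, and that the extension to merely weakly$^*$-$L^2$ functions $\zeta$ follows from the proof of \cite[Theorem 9.2.14]{hnvw}. Your proposal therefore supplies a self-contained argument where the paper offers only citations, and your finite-rank computation together with the unimodular rotation trick for the absolute-value bound are correct and standard.

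The gap is precisely where you locate it, but it is more serious than your sketch suggests. Scalar martingale convergence yields $\langle x_m^*,\xi_n(t)\rangle\to\langle x_m^*,\xi(t)\rangle$ a.e.\ for each $x_m^*$ in a countable norming set, but passing from this to $\langle\zeta(t),\xi_n(t)\rangle\to\langle\zeta(t),\xi(t)\rangle$ requires $\sup_n\|\xi_n(t)\|<\infty$ for a.e.\ $t$, and you have not established this: the Pettis conditional expectation over a partition atom $A$ only satisfies $\|\xi_n(t)\|\leq\|\mathbb{I}_\xi\|\,\mu(A)^{-1/2}$, which blows up as the partition refines. Since $\zeta$ is not assumed measurable, you also cannot fold $\zeta(t)$ into the countable test set. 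A clean repair is to first truncate: set $\xi_R=\xi\,\mathbf{1}_{\{\|\xi\|\leq R\}}$, observe that $\mathbb{I}_{\xi_R}=\mathbb{I}_\xi\circ M_{\mathbf{1}_{\{\|\xi\|\leq R\}}}\to\mathbb{I}_\xi$ in $\gamma$-norm by the right-ideal property (since $M_{\mathbf{1}_{\{\|\xi\|\leq R\}}}\to I$ strongly on $L^2$ and $\mathbb{I}_\xi$ is a $\gamma$-limit of finite-rank operators), run your conditional-expectation argument for the bounded function $\xi_R$ where Bochner martingale convergence applies directly, and then pass to the limit using Fatou and the fact that $\|\xi\,\mathbf{1}_{\{\|\xi\|>R\}}\|_\gamma\to 0$.
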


If we consider measurable 
functions $\zeta\colon \Omega\to X^*$ only, the above statement
is provided by \cite[Corollary 5.5]{kal-wei1}. 
The fact that this holds as well in the more general
setting of the present paper follows from the proof of 
\cite[Theorem 9.2.14]{hnvw}.

The main result of this section is the following.

\begin{thm}\label{main5}
Let $(T_t)_{t\geq 0}$ be a bounded $C_0$-semigroup on
$X$ and let $A$ be its negative generator.
The following assertions are
equivalent.
\begin{itemize}
\item [(i)] The semigroup
$(T_t)_{t\geq 0}$ is $\gamma$-bounded, that is, 
the set ${\mathcal T}_A = 
\{T_t\, :\, t\geq 0\}$ is $\gamma$-bounded;
\item [(ii)] There exists a $\gamma$-bounded 
homomorphism $\rho_{0,A}
\colon\A_0(\C_+)\to B(X)$ such that 
(\ref{main2}) holds true for all $b\in L^1(\R_+)$.
\end{itemize}
In this case, $\rho_{0,A}$ is unique 
and $\gamma({\mathcal T}_A)
\leq \gamma(\rho_{0,A}) \leq \gamma({\mathcal T}_A)^2$.

Further
there exists a unique bounded 
homomorphism $\rho_{A}\colon \A(\C_+)\to B(X)$ extending 
$\rho_{0,A}$, this homomorphism is $\gamma$-bounded and 
$\gamma(\rho_{A}) =\gamma(\rho_{0,A})$.
\end{thm}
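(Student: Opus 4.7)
The plan is to follow the scheme of Theorem \ref{main1} and Corollary \ref{main3}, replacing the Hilbertian Cauchy-Schwarz and Plancherel steps of Lemma \ref{key} by $\gamma$-space analogues, in the spirit of \cite{arn2, arn1, lem1}.

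For the implication (i) $\Rightarrow$ (ii), set $C = \gamma({\mathcal T}_A)$. The core step is a $\gamma$-bounded enhancement of Lemma \ref{key}. I would keep the identity
\[
\Gamma\bigl(A,(2\pi)^{-1}\widehat{f}\widehat{h}\bigr) = \frac{1}{4\pi^2}\int_{-\infty}^\infty f(s)\, W(s)V(s)\,ds
\]
from Lemma \ref{key} (with $h = wv$, $\norm{w}_2^2 = \norm{v}_2^2 = \norm{h}_1$), and for a finite family $(\varphi_j, x_j)$ dualize $\sum_j \gamma_j \otimes \rho_{0,A}(\varphi_j) x_j \in G(X)$ against $\eta = \sum_j \gamma_j \otimes x_j^* \in G'(X^*)$, reducing the $G(X)$-norm of the former to a double integral pairing. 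The Hilbertian factorization of the resulting integral would be replaced by Lemma \ref{Integral}: the function $s \mapsto W(s)x$ equals the Fourier transform of $r \mapsto \widehat{w}(r) T_r x\, \mathbf{1}_{[0,\infty)}(r)$, so the $\gamma$-Plancherel theorem (which holds in any Banach space via the identification $\gamma(\R;X) \simeq \gamma(L^2(\R); X)$) combined with the $\gamma$-boundedness of $\{T_r\}_{r\geq 0}$ controls its $\gamma(\R;X)$-norm by a multiple of $C\norm{w}_2\norm{x}$; symmetrically, $s \mapsto V(s)^*x^*$ is handled in $\gamma'(\R; X^*)$ via the $\gamma'$-boundedness of the adjoint semigroup $\{T_r^*\}_{r\geq 0}$, which shares the same constant $C$. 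The projective-tensor description $\A_0(\R) \simeq (C_0(\R)\widehat\otimes H^1(\R))/\ker(\overset{\circ}{\sigma})$ and the contraction principle inside $G(X)$ then let me absorb the scalar factors $f_j(s)$ uniformly in $j$ and derive $\gamma(\rho_{0,A}) \leq C^2$.

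With this $\gamma$-bounded key estimate in hand, the bilinear map $u_A\colon C_0(\R)\times H^1(\R) \to B(X)$, its regularizations $u_{A+\varepsilon}$, and the homomorphism $\rho_{0,A}$ are constructed as in the proof of Theorem \ref{main1}; Lemmas \ref{CVLemma} and \ref{lemcompatibility} apply on any Banach space, and uniqueness follows from the density of $\{L_b : b \in L^1(\R_+)\}$ in $\A_0(\C_+)$ given by Lemma \ref{Laplace}. For the converse (ii) $\Rightarrow$ (i), I would set $b_{n,t}(s) = n e^{-n(s-t)}\mathbf{1}_{[t,\infty)}(s)$: then $\norm{L_{b_{n,t}}}_{\A_0} \leq \norm{b_{n,t}}_1 = 1$ by Lemma \ref{Laplace}, so the family $\{\rho_{0,A}(L_{b_{n,t}}) : n \in \N,\, t \geq 0\}$ is $\gamma$-bounded with constant at most $\gamma(\rho_{0,A})$, and strong continuity of $(T_s)$ gives $\rho_{0,A}(L_{b_{n,t}})x \to T_t x$ as $n \to \infty$; since $\gamma$-boundedness is preserved under strong operator limits of uniformly $\gamma$-bounded families, we deduce $\gamma({\mathcal T}_A) \leq \gamma(\rho_{0,A})$. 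The extension to $\A(\C_+)$ mirrors Corollary \ref{main3}: define $\rho_A(\varphi)x = \lim_N \rho_{0,A}(\varphi\widetilde{G_N})x$; the ideal estimate (\ref{ideal}) gives $\norm{\varphi\widetilde{G_N}}_{\A_0} \leq \norm{\varphi}_\A$, so the $\gamma$-bound of $\rho_{0,A}$ transfers to $\rho_A$ by passing to the strong limit, yielding $\gamma(\rho_A) = \gamma(\rho_{0,A})$; uniqueness follows as in Corollary \ref{main3}.

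The principal obstacle is the $\gamma$-bounded analogue of Lemma \ref{key}. The Hilbertian argument hinged on Cauchy-Schwarz and Plancherel in $L^2(\R; H)$, which do not transfer to general Banach spaces; the substitute demands a careful interplay of the $\gamma$-$\gamma'$ duality (Lemma \ref{Integral}), the $\gamma$-Plancherel theorem, and the $\gamma$- and $\gamma'$-boundedness of the semigroup and its adjoint. A secondary difficulty is the simultaneous handling of the double-indexed structure $\varphi_j = \sum_k f_{j,k}\star h_{j,k}$ that arises from the very definition of the $\A_0$-norm: the projective tensor description of $\A_0(\R)$ together with the contraction principle inside $G(X)$ are the essential tools to absorb the factors $f_{j,k}$ without introducing a dependence on the number $N$ of operators.
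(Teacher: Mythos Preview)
Your proposal is correct and follows essentially the same route as the paper. Two minor differences are worth noting. For your ``secondary difficulty'' (passing from single-term elements $(f\star wv)^\sim$ to arbitrary $\varphi_j\in{\rm Ball}(\A_0(\C_+))$), the paper does not unroll the double index directly; instead it observes that ${\rm Ball}(\A_0(\C_+))=\overline{\rm Conv}(\mathcal L)$, where $\mathcal L=\{(f\star wv)^\sim : \norm{f}_\infty,\norm{w}_2,\norm{v}_2\leq 1\}$, and then invokes the stability of $\gamma$-bounds under convex hulls and closures \cite[Proposition 8.1.21]{hnvw}---this is cleaner than, though ultimately equivalent to, the direct argument you sketch via orthogonality in $L^2(\N\times\N\times\R)$ and the contraction principle. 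For (ii)$\Rightarrow$(i), the paper first builds the extension $\rho_A$ on $\A(\C_+)$ and then reaches $T_t$ via the Dirac mass $\delta_t$ (as in Remark \ref{narrow}); your approximants $b_{n,t}\in L^1(\R_+)$ with $\norm{b_{n,t}}_1=1$ stay inside $\A_0(\C_+)$ and give a slightly more direct alternative.
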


A thorough look at the proofs of Theorem \ref{main1} and Corollary 
\ref{main3} reveals that in Subsections \ref{MAIN}
and \ref{FCA}, the Hilbertian structure was used only in 
Lemma \ref{key}. So without any surprise the main point in 
proving Theorem \ref{main5} is the following 
$\gamma$-bounded version of Lemma \ref{key}.

\begin{lem}\label{key2}
Let $(T_t)_{t\geq 0}$ be a $\gamma$-bounded $C_0$-semigroup on
$X$ and let $A$ be its negative generator. Let 
$C= \gamma({\mathcal T}_A)$. Then the set 
\begin{equation}\label{Theset}
\Bigl\{\Gamma\bigl(A,(2\pi)^{-1}\widehat{f}\widehat{wv}\bigr)\, :\,
f\in C_{00}(\R),\, w,v\in H^2(\R)\cap\S(\R),\,
\{\norm{f}_\infty,\norm{w}_2,\norm{v}_2\}\leq 1\Bigr\}
\end{equation}
is $\gamma$-bounded, with $\gamma$-bound $\leq C^2$.
\end{lem}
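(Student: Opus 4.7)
My plan is to follow the proof of Lemma \ref{key} step by step, replacing the scalar Cauchy--Schwarz and Plancherel estimates by their $\gamma$-space analogues. Fix a finite family $(f_k,w_k,v_k)_{k=1}^N$ as in (\ref{Theset}) and vectors $(x_k)_{k=1}^N\subset X$, and set $\Gamma_k = \Gamma\bigl(A,(2\pi)^{-1}\widehat{f_k}\widehat{w_kv_k}\bigr)$. The operator-valued computation yielding (\ref{equaforgammbound}) uses only Fubini and the multiplicativity of the Hille--Phillips calculus; both are valid on any Banach space. Thus for every $y_k^*\in X^*$,
\[
\bigl\langle \Gamma_k(x_k),y_k^*\bigr\rangle
= \frac{1}{4\pi^2}\int_{\R} f_k(s)\,\bigl\langle W_k(s)x_k,V_k(s)^*y_k^*\bigr\rangle\,ds,
\]
where $\langle\,\cdotp,\,\cdotp\rangle$ is the $X/X^*$-duality and $W_k(s) = \int_0^\infty \widehat{w_k}(r)e^{-irs}T_r\,dr$, $V_k(s) = \int_0^\infty \widehat{v_k}(r)e^{-irs}T_r\,dr$. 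Using $\norme{f_k}_\infty \leq 1$ together with the duality $G'(X^*)\subset G(X)^*$, it will suffice to show
\[
\sum_{k=1}^N \int_{\R} \bigl|\langle W_k(s)x_k,V_k(s)^*y_k^*\rangle\bigr|\,ds \leq (2\pi C)^2 \Bignorm{\sum_k\gamma_k\otimes x_k}_{G(X)}\Bignorm{\sum_k\gamma_k\otimes y_k^*}_{G'(X^*)}
\]
for every family $(y_k^*)_{k=1}^N\subset X^*$.

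I will achieve this by bounding the $\gamma$-norm of $\Xi\colon(s,k)\mapsto W_k(s)x_k$ on $\Omega = \R\times\{1,\dots,N\}$ (Lebesgue $\times$ counting measure), and symmetrically the $\gamma'$-norm of $Z\colon(s,k)\mapsto V_k(s)^*y_k^*$. Since $W_k(s)x_k$ is the Fourier transform in $s$ of $r\mapsto \mathbf{1}_{\R_+}(r)\widehat{w_k}(r)T_rx_k$, and since the factorization $\mathbb{I}_{\widehat{\xi}} = \mathbb{I}_\xi\circ\mathcal{F}$ combined with Plancherel shows that $\mathcal{F}$ acts on $\gamma(\R;X)$ with norm $\sqrt{2\pi}$, the problem reduces to estimating the $\gamma$-norm of $(r,k)\mapsto \widehat{w_k}(r)T_rx_k$ on $\R_+\times\{1,\dots,N\}$. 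The multiplier characterization of $\gamma$-boundedness (see \cite[Chapter 9]{hnvw}) gives that multiplication by $r\mapsto T_r$ acts on $\gamma(\R_+\times\{1,\dots,N\};X)$ with norm $\leq C$, so this $\gamma$-norm is at most $C$ times that of the elementary function $(r,k)\mapsto\widehat{w_k}(r)x_k$, which is computed from the orthonormal system $\bigl(0,\dots,\widehat{w_k}/\norme{\widehat{w_k}}_2,\dots,0\bigr)_{k=1}^N$ in $L^2(\R_+)^N$ to equal $\bignorm{\sum_k\gamma_k\otimes\norme{\widehat{w_k}}_2 x_k}_{G(X)}$. The Gaussian contraction principle together with $\norme{\widehat{w_k}}_2 = \sqrt{2\pi}\norme{w_k}_2\leq\sqrt{2\pi}$ bounds this by $\sqrt{2\pi}\bignorm{\sum_k\gamma_k\otimes x_k}_{G(X)}$, yielding altogether $\norme{\Xi}_{\gamma(\Omega;X)}\leq 2\pi C\bignorm{\sum_k\gamma_k\otimes x_k}_{G(X)}$. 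The parallel estimate for $Z$ uses that the multiplier $\zeta\mapsto T_\cdotp^*\zeta$ on $\gamma'(\Omega;X^*)$ is dual to multiplication by $T_\cdotp$ on $\gamma(\Omega;X)$ and hence also has norm $\leq C$; the analogous computation then produces $\norme{Z}_{\gamma'(\Omega;X^*)}\leq 2\pi C\bignorm{\sum_k\gamma_k\otimes y_k^*}_{G'(X^*)}$.

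Plugging these two bounds into Lemma \ref{Integral} on $\Omega$ gives the desired integral estimate, and dividing by $4\pi^2$ completes the proof with $\gamma$-bound $\leq C^2$. The main technical obstacle is the simultaneous use of the multiplier form of $\gamma$-boundedness on $\gamma(\Omega;X)$ and of its transpose on $\gamma'(\Omega;X^*)$; the latter requires precisely the weak*-measurable extension of Lemma \ref{Integral} invoked in the paper just after its statement, since $r\mapsto T_r^*y_k^*$ is in general only weak*-measurable in $X^*$.
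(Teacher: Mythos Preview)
Your proposal is correct and follows essentially the same route as the paper: both arguments start from (\ref{equaforgammbound}), pass to the $\gamma$-space over $\R\times\{1,\dots,N\}$, bound the $W_k(\cdot)x_k$ side via the Fourier extension on $\gamma$-spaces combined with the $\gamma$-multiplier theorem, handle the $V_k(\cdot)^*y_k^*$ side by duality (which is exactly the paper's computation $\mathbb{I}_{\beta_k}=(\Psi^*\circ M^*)(\widehat{v_k}\otimes x_k^*)$), and conclude via Lemma~\ref{Integral}. Your explicit remark that the dual side forces the weak$^*$-$L^2$ extension of Lemma~\ref{Integral} is precisely the point the paper makes when introducing $\gamma'_+(\Omega;X^*)$.
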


\begin{proof}
Let $N\in\N$ and let $f_1,\ldots, f_N\in C_{00}(\R)$,
$w_1,\ldots,w_N,v_1,\ldots,v_N \in H^2(\R)\cap\S(\R)$
such that $\norm{f_k}_\infty\leq 1, 
\norm{w_k}_2\leq 1$ and $\norm{v_k}_2\leq 1$
for any $k=1,\ldots, N$. We set 
$$
S_k= \Gamma\bigl(A,(2\pi)^{-1}\widehat{f_k}\widehat{w_kv_k}\bigr),
\qquad k=1,\ldots, N.
$$
Let $x_1,\ldots,x_N\in X$ and $x_1^*,\ldots,x_N^*\in X^*$.
Following the notation in the proof of Lemma \ref{key}, we 
define, for any $k=1,\ldots, N$, two strongly 
continuous functions $W_k,V_k\colon \R\to B(X)$ by
$$
W_k(s) = \int_{0}^{\infty} \widehat{w_k}(r)e^{-irs}T_r\,dr
\qquad\hbox{and}\qquad
V_k(s) = \int_{0}^{\infty} \widehat{v_k}(t)e^{-its}T_t\,dt,
\qquad s\in\R.
$$
According to (\ref{equaforgammbound}),
$$
\sum_{k=1}^{N} \langle S_k(x_k),x_k^*\rangle\,
=\,
\frac{1}{4\pi^2}\,\sum_{k=1}^{N}
\int_{-\infty}^\infty f_k(s) \langle 
W_k(s) x_k, V_k(s)^*x_k^* \rangle\,ds,
$$
hence
$$
\Bigl\vert \sum_{k=1}^{N} \langle S_k(x_k),x_k^*\rangle\Bigr\vert
\,\leq\,
\frac{1}{4\pi^2}\,\sum_{k=1}^{N}
\int_{-\infty}^\infty \bigl\vert
 \langle 
W_k(s) x_k, V_k(s)^*x_k^* \rangle
\bigr\vert\, ds.
$$

We let $\N_N=\{1,\ldots,N\}$ for convenience.
We will use $\gamma$-spaces on either $\R$ or $\N_N\times\R$.
For any $k=1,\ldots,N$, the function
$$
\alpha_k : = W_k(\,\cdotp)x_k\colon \R\longrightarrow X
$$
is measurable and
weakly-$L^2$. Likewise, 
$$
\beta_k : = V_k(\,\cdotp)^*x_k^*\colon \R\longrightarrow X^*
$$
is weakly$^*$-$L^2$.
If we are able to show that 
$\alpha_k\in \gamma(\R;X)$
and $\beta_k\in \gamma'_+(\R;X^*)$
for any $k=1,\ldots,N$, then  Lemma \ref{Integral} ensures that
\begin{equation}\label{4pi2}
\Bigl\vert \sum_{k=1}^{N} \langle S_k(x_k),x_k^*\rangle\Bigr\vert
\,\leq\,
\frac{1}{4\pi^2}\,
\bignorm{(k,s)\mapsto W_k(s)x_k}_{\gamma(\N_N\times\R;X)}
\bignorm{(k,s)\mapsto V_k^*(s)x_k^*}_{\gamma'(\N_N\times\R;X^*)}.
\end{equation}
Our aim is now to check that $\alpha_k\in \gamma(\R;X)$
and $\beta_k\in \gamma'_+(\R;X^*)$
for any $k$ and to estimate the right-hand side of 
(\ref{4pi2}).

By assumption, ${\mathcal T}_A=\{T_t\, :\, t\geq 0\}$ is $\gamma$-bounded.
According to the Multiplier Theorem stated as \cite[Theorem 6.1]{haa-roz},
there exists a bounded operator 
$$
M\colon \gamma(L^2(\R);X)\longrightarrow \gamma(L^2(\R);X)
$$
with norm $\leq C = \gamma({\mathcal T}_A)$,  
mapping $\gamma(\R;X)$ into itself, and such that 
for any $\xi\in\gamma(\R;X)$,
$[M(\xi)](t) = T_t(\xi(t))$ if $t\geq 0$,
and $[M(\xi)](t) = 0$ if $t< 0$.
Further by the Extension Theorem stated as \cite[Theorem 9.6.1]{hnvw},
${\mathcal F}\otimes I_X\colon L^2(\R)\otimes X\to L^2(\R)\otimes X$ admits a (necessarily unique)
bounded extension 
$$
\Psi\colon \gamma(L^2(\R);X)\longrightarrow \gamma(L^2(\R);X),
$$
with norm $\leq \sqrt{2\pi}$. 
According to  \cite[Lemma 2.19]{arn1},
$\mathbb{I}_{\alpha_k} = (\Psi\circ M)(\widehat{w_k}\otimes x_k)$
for any $k=1,\ldots,N$.
This shows that $\alpha_k\in \gamma(\R;X)$. 
Let $(e_k)_{k=1}^N$ be the canonical basis of 
$\ell^2_N$. It follows from above that
\begin{align*}
\bignorm{(k,s)\mapsto W_k(s)x_k}_{\gamma(\N_N\times\R;X)}
\, 
& = \Bignorm{\sum_{k=1}^N 
e_k\otimes (\Psi\circ M)(\widehat{w_k}\otimes x_k)}_{\gamma(L^2(\N_N\times\R);X)}
\\
& \leq \sqrt{2\pi}\, C\,
\Bignorm{\sum_{k=1}^N e_k \otimes\widehat{w_k}\otimes x_k}_{\gamma(L^2(\N_N\times\R);X)}.
\end{align*}
The finite sequence $(e_k \otimes\widehat{w_k})_{k=1}^N$ 
is an orthogonal
family of $L^2(\N_N\times\R)$. Consequently,
\begin{align*}
\Bignorm{\sum_{k=1}^N e_k \otimes\widehat{w_k}\otimes x_k}_{\gamma(L^2(\N_N\times\R);X)}\, &
=\Bignorm{\sum_{k=1}^N \norm{\widehat{w_k}}_2\gamma_k\otimes x_k}_{G(X)}\\
&\leq\max_k\norm{\widehat{w_k}}_2\,
\Bignorm{\sum_{k=1}^N  \gamma_k\otimes x_k}_{G(X)}.
\end{align*}
Since $\norm{\widehat{w_k}}_2=\sqrt{2\pi}\norm{w_k}_2\leq \sqrt{2\pi}$
for any $k=1,\ldots,N$, 
we finally obtain that
$$
\bignorm{(k,s)\mapsto W_k(s)x_k}_{\gamma(\N_N\times\R;X)}\,
\leq\,2\pi\, C\,\Bignorm{\sum_{k=1}^N 
\gamma_k\otimes x_k}_{G(X)}.
$$

We now analyse the $\beta_k$. Fix $k$
and consider $g\in L^2(\R)$ and $x\in X$.
Using Lemma \ref{Integral}, we have
\begin{align*}
\langle \mathbb{I}_{\beta_k}(g), x\rangle \,
& =\int_{-\infty}^\infty g(s)\langle
x_k^*, V_k(s)x\rangle\,ds\\
& =\int_{-\infty}^\infty
g(s) {\mathcal F}\bigl(
\widehat{v_k}\langle x_k^*, T_{\cdotp}(x)
\rangle\bigr)(s)\, ds\\
& =\int_{0}^\infty \widehat{g}(t) \widehat{v_k}(t) \langle x_k^*, 
T_t(x)\rangle\, dt\\
& =\bigl\langle 
\widehat{v_k}\otimes x_k^*, M(\widehat{g}\otimes x)\bigr\rangle\\
& =\bigl\langle 
\widehat{v_k}\otimes x_k^*, 
(M\circ\Psi)(g\otimes x)\bigr\rangle\\
& =\bigl\langle 
(\Psi^*\circ M^*)(\widehat{v_k}\otimes x_k^*),
g\otimes x\bigr\rangle.
\end{align*}
This shows that $\beta_k\in\gamma'_+(\R;X^*)$, 
with $\mathbb{I}_{\beta_k} = 
(\Psi^*\circ M^*)(\widehat{v_k}\otimes x_k^*)$.
Now arguing as in
the $W_k(\,\cdotp)x_k$ case, we obtain that
$$
\bignorm{(k,s)\mapsto V_k^*(s)x_k^*}_{\gamma'(\N_N\times\R;X^*)}
\,
\leq\,2\pi\, C\,\Bignorm{\sum_{k=1}^N 
\gamma_k\otimes x_k^*}_{G'(X)}.
$$

We now implement these estimates in (\ref{4pi2}) to obtain that
$$
\Bigl\vert \sum_{k=1}^{N} \langle S_k(x_k),x_k^*\rangle\Bigr\vert
\,\leq\,C^2 \Bignorm{\sum_{k=1}^N 
\gamma_k\otimes x_k}_{G(X)}\Bignorm{\sum_{k=1}^N 
\gamma_k\otimes x_k^*}_{G'(X)}.
$$
By the very definition of $G'(X)$, this means that 
$$
\Bignorm{\sum_{k=1}^N 
\gamma_k\otimes S_k(x_k)}_{G(X)}\,\leq\,
C^2 \Bignorm{\sum_{k=1}^N 
\gamma_k\otimes x_k}_{G(X)},
$$
which completes the proof.
\end{proof}

\begin{proof}[Proof of Theorem \ref{main5}]
Assume (i). By Lemma \ref{key2}, any element in the set
(\ref{Theset}) has norm $\leq C^2$. Hence 
the proof of Theorem \ref{main1} 
shows
the existence of a unique bounded 
homomorphism $\rho_{0,A}
\colon\A_0(\C_+)\to B(X)$ such that 
(\ref{main2}) holds true for all $b\in L^1(\R_+)$.

To prove $\gamma$-boundedness of $\rho_{0,A}$, we 
introduce the set
$$
{\mathcal L}\,=\,
\bigl\{(f\star wv)^{\sim}\, :\,
f\in C_{00}(\R),\, w,v\in H^2(\R)\cap\S(\R),\,
\{\norm{f}_\infty,\norm{w}_2,\norm{v}_2\}\leq 1\bigr\}\,
\subset\A_{0}(\C_+).
$$
Recall (see the proof of Lemma \ref{key}) that any $h\in H^1(\R)$ can be written
as a product $h=wv$, with 
$w,v\in H^2(\R)$ and $\norm{w}_2^2=\norm{v}_2^2=\norm{h}_1$,
and that $H^2(\R)\cap\S(\R)$ is dense in $H^2(\R)$.
Going back to Definition \ref{defNA}, we 
derive that 
$$
{\rm Ball}(\A_0(\C_+)) =\overline{\rm Conv}\{\mathcal L\}.
$$
This implies that
$$
\rho_{0,A}\bigl({\rm Ball}(\A_0(\C_+))\bigr)
\subset 
\overline{\rm Conv}\bigl\{\rho_{0,A}(\mathcal L)\bigr\}.
$$
Since $\rho_{0,A}((f\star wv)^{\sim}) =
\Gamma\bigl(A,(2\pi)^{-1}\widehat{f}\widehat{wv}\bigr)$
for any $f\in C_{00}(\R)$
and any 
$w,v\in H^2(\R)\cap\S(\R)$, Lemma \ref{key2}
says that $\rho_{0,A}(\mathcal L)$ is 
$\gamma$-bounded, with $\gamma$-bound $\leq C^2$. Owing to the 
fact that $\gamma$-boundedness 
and $\gamma$-bounds are preserved by 
convex hulls (see e.g. \cite[Proposition 8.1.21]{hnvw})
and 
uniform limits, we infer that 
$\rho_{0,A}$ is $\gamma$-bounded, with $\gamma(\rho_{0,A})\leq C^2$.
This proves (ii).

Conversely assume (ii).
The proof of Corollary \ref{main3} shows
the existence 
of a unique bounded 
homomorphism $\rho_{A}\colon \A(\C_+)\to B(X)$
extending $\rho_{0,A}$ as well as the 
fact that $\rho_{A}\bigl({\rm Ball}(\A(\C_+))\bigr)$
belongs to the strong closure of 
$\rho_{0,A}\bigl({\rm Ball}(\A_0(\C_+))\bigr)$.
Since $\gamma$-boundedness 
and $\gamma$-bounds are preserved by strong limits,
we obtain that 
$\rho_{A}$ is $\gamma$-bounded, with $\gamma(\rho_{A}) =\gamma(\rho_{0,A})$.

Finally the argument in Remark \ref{narrow} (1) shows that 
for any $t>0$, 
$$
T_t\in \rho_{A}\bigl({\rm Ball}(\A(\C_+))\bigr).
$$
This implies (i), with 
$\gamma({\mathcal T}_A)\leq \gamma(\rho_A)$.
\end{proof}

\vskip 0.5cm
\noindent
{\bf Acknowledgements.} The two authors were supported by the ANR project Noncommutative
analysis on groups and quantum groups (No./ANR-19-CE40-0002). The first author was also 
supported by the ERC grant Rigidity of groups and higher index theory under the European Union’s Horizon 2020 research and innovation program (grant
agreement no. 677120-INDEX). 

We gratefully thank the referee for providing a simplification of the proof
of Theorem \ref{thmmutlH1} which improved the original argument, as well as  for other useful remarks.

\vskip 0.5cm

\bibliographystyle{plain}

\bibliography{article}

\vskip 0.5cm

\end{document}